\newcommand{\sR}{\mathbb{R}}
\newcommand{\1}{{\mathbbm{1}}}
\newcommand{\w}{{\mathbf{w}}}
\newcommand{\beq}{\begin{equation}}
\newcommand{\eeq}{\end{equation}}
\newcommand{\beqa}{\begin{eqnarray}}
\newcommand{\eeqa}{\end{eqnarray}}
\newcommand{\beqas}{\begin{eqnarray*}}
\newcommand{\eeqas}{\end{eqnarray*}}
\newcommand{\bi}{\begin{itemize}}
\newcommand{\ei}{\end{itemize}}
\newcommand{\ba}{\begin{array}}
\newcommand{\ea}{\end{array}}
\newtheorem{theorem}{Theorem}
\newtheorem{lemma}{Lemma}
\newtheorem{corollary}{Corollary}
\newtheorem{remark}{Remark}
\newtheorem{proposition}{Proposition}
\newtheorem{assumption}{Assumption}
\newtheorem{definition}{Definition}
\def\1{\bm{1}}
\def\vzero{{\bm{0}}}
\def\vg{{\bm{g}}}
\def\vu{{\bm{u}}}
\def\vv{{\bm{v}}}
\def\vw{{\bm{w}}}
\def\vx{{\bm{x}}}
\def\vy{{\bm{y}}}
\def\vz{{\bm{z}}}
\DeclareMathAlphabet{\mathsfit}{\encodingdefault}{\sfdefault}{m}{sl}
\SetMathAlphabet{\mathsfit}{bold}{\encodingdefault}{\sfdefault}{bx}{n}
\def\gW{{\mathcal{W}}}
\def\gX{{\mathcal{X}}}
\def\gY{{\mathcal{Y}}}
\def\sR{{\mathbb{R}}}
\newcommand{\E}{\mathbb{E}}
\DeclareMathOperator*{\argmin}{arg\,min}
\author{
 Chaobing Song$^\dagger$\thanks{ This work was conducted during Chaobing Song's visit to Professor Yi Ma's group at UC Berkeley.} \hspace{0.8cm}
 Zhengyuan Zhou$^+$\hspace{0.8cm}
 Yichao Zhou$^\ddagger$\hspace{0.8cm}
 Yong Jiang$^\dagger$\hspace{0.8cm}
Yi Ma$^\ddagger$ 
\\
\vspace*{-0.05in} 
$^\dagger$Tsinghua-Berkeley Shenzhen Institute, Tsinghua University\\ {songcb16@mails.tsinghua.edu.cn},\quad {jiangy@sz.tsinghua.edu.cn}\\
$^\ddagger$Department of EECS, University of California, Berkeley\\
{zyc@berkeley.edu, yima@eecs.berkeley.edu} \\
$^+$Stern School of Business, New York University, 
{zzhou@stern.nyu.edu}
}
\title{Optimistic Dual Extrapolation for\\
 Coherent Non-monotone Variational Inequalities}
\begin{document}
\maketitle
\begin{abstract}
The optimization problems associated with training generative adversarial neural networks can be largely reduced to certain {\em non-monotone} variational inequality problems (VIPs), whereas existing convergence results are mostly based on monotone or strongly monotone assumptions. In this paper, we propose {\em optimistic dual extrapolation (OptDE)}, a method that only performs {\em one} gradient evaluation per iteration. We show that OptDE is provably convergent to {\em a strong solution} under different coherent non-monotone assumptions. In particular, when a {\em weak solution} exists, the convergence rate of our method is $O(1/{\epsilon^{2}})$, which matches the best existing result of the methods with two gradient evaluations. Further, when a {\em $\sigma$-weak solution} exists, the convergence guarantee is improved to the linear rate $O(\log\frac{1}{\epsilon})$. Along the way--as a byproduct of our inquiries into non-monotone variational inequalities--we provide the  near-optimal $O\big(\frac{1}{\epsilon}\log \frac{1}{\epsilon}\big)$ convergence guarantee in terms of restricted strong merit function for monotone variational inequalities. We also show how our results can be naturally generalized to the stochastic setting, and obtain corresponding new convergence results. Taken together, our results contribute to the broad landscape of variational inequality--both non-monotone and monotone alike--by providing a novel and more practical algorithm with the state-of-the-art convergence guarantees.  
\end{abstract}

\section{Introduction}

Variational inequality (VI) provides a principled framework for minimax problems via their first-order optimality conditions. 
Given a closed convex set $\gW\subset\sR^d$ and an operator $F:\gW\rightarrow \sR^d$, the variational inequality problem VIP$(F,\gW)$ aims to find a solution $\vw^*\in\gW$ such that: 
\begin{eqnarray}
\forall \vw\in\gW, \; \langle F(\vw^*), \vw-\vw^*\rangle \ge 0, \label{eq:vi-prop}
\end{eqnarray}
where $\vw^*$ is  called a {\em  strong solution} of VIP$(F,\gW)$. 
For the minimax problem 
\begin{equation}
  \min_{\vx\in\gX}\max_{\vy\in\gY}f(\vx,\vy), \label{eq:minmax}  
\end{equation}
let $\gW \equiv \gX\times \gY, \vw\equiv\left[\begin{smallmatrix}
     \vx \\
     \vy
\end{smallmatrix}\right], F(\vw) \equiv \left[\begin{smallmatrix}
     \nabla_{\vx}f(\vx,\vy) \\
     -\nabla_{\vy}f(\vx,\vy)
\end{smallmatrix}\right]$. Then solving \eqref{eq:vi-prop} is equivalent to finding a first-order Nash equilibrium of the minimax problem \eqref{eq:minmax} \cite{nouiehed2019solving}.

\vspace{-1mm}
\paragraph{Convex-Concave Minimax Problems.}
The operator $F(\vw)$ will be {\em monotone} if 
\begin{eqnarray}
\forall \vw,\vv\in\gW,\; \langle F(\vw)-F(\vv), \vw-\vv\rangle \ge 0.\label{eq:mono-ass}
\end{eqnarray}
VI with monotone operators has been well studied, which provides a concise and {optimal} framework for convex-concave minimax problems \cite{nemirovski2004prox}. 
For monotone VIP$(F,\gW)$, it is well known that the strong solution satisfying \eqref{eq:vi-prop} is also equivalent to the solution $\vw^*\in\gW$ satisfying: 
\begin{eqnarray}
\forall \vw\in\gW,\; \langle F(\vw), \vw-\vw^*\rangle \ge 0,\label{eq:weak-vi-prop}
\end{eqnarray}
where $\vw^*$ is called a {\em  weak solution} of VIP$(F,\gW)$. 
A classical result \cite{nemirovski2004prox} under the monotone and Lipschitz continuous assumptions is that the {\em Mirror-Prox} algorithm \cite{nemirovski2004prox}  can converge to an $\epsilon$-accurate weak solution in terms of ergodic averaging in $O(1/\epsilon)$ iterations, which is optimal for first-order methods in solving monotone VIPs \cite{nemirovsky1983problem,ouyang2018lower}. Nemirovski's Mirror-Prox is a non-Euclidean extension of the extragradient method \cite{korpelevich1976extragradient} from the perspective of mirror descent. Another important non-Euclidean extension is Nesterov's {\em dual extrapolation} \cite{nesterov2007dual} from the perspective of dual averaging, which also has the optimal $O(1/\epsilon)$ convergence rate.
The main difference between mirror descent and  dual averaging is the way of combining the constraint (or the regularization term if exists) into the projection (or the proximal) step \cite{nesterov2007dual}.

Despite obtaining the optimal convergence rate, both Mirror-Prox and dual extrapolation are \emph{two-call} extragradient methods that need to evaluate gradients {\em twice per iteration}. 
In some contexts such as training deep neural networks, evaluating gradients can be expensive. Thus it will have significant practical benefits if we only need one gradient evaluation per iteration and still maintain the same convergence rate. In terms of \emph{single-call} methods for minimax problems, vanilla gradient descent ascent (and its mirror descent generalizations) might be a natural choice. Unfortunately, it is not guaranteed and it can diverge even in simple monotone settings \cite{liang2018interaction}. 
Consequently, after the (two-call) extragradient method \cite{korpelevich1976extragradient},  several \emph{single-call} extragradient methods \cite{popov1980modification,chambolle2011first,cui2016analysis,malitsky2015projected} have been analyzed under the monotone setting and share the same convergence rates with 
Mirror-Prox and dual extrapolation \cite{hsieh2019convergence}. However, there is an increasing trend in applying these single-call extragradient methods to stabilize the training of generative adversarial networks (GAN) \cite{daskalakis2018training,gidel2018variational,peng2020training}, which is {\em  nonconvex-nonconcave} in general and hence has remained underexplored.

\paragraph{Nonconvex-Nonconcave Minimax Problems.}
Despite the well-developed convergence theory for monotone VIPs and thus for convex-concave minimax problems, many minimax problems arising in modern machine learning are nevertheless {\em nonconvex-nonconcave}, such as GAN \cite{goodfellow2014generative}, adversarial training \cite{goodfellow2014explaining}, gradient reversal for domain adaption \cite{ganin2014unsupervised}, and multi-agent reinforcement learning \cite{silver2016mastering}. As a result, the corresponding VI is not monotone and the aforementioned theoretical guarantees for monotone VIPs no longer apply. First, for non-monotone VIPs, it is nontrivial to obtain the rate of convergence to a weak solution, thus one may explore the rate of convergence to a strong solution instead. Second, without the monotone property, the ergodic averaging technique \cite{korpelevich1976extragradient} will no longer have theoretical guarantees, thus we might need to choose the  
{\em  last iterate} or {\em  best iterate}. However, the classical convergence result \cite{nemirovski2004prox} said little about the rate of convergence to a weak solution or the convergence of  last iterate or best iterate.\footnote{Recently, \cite{golowich2020last} shows the first tight last iterate result for general smooth convex-concave minimax problems with Lipschitz derivatives of operators.}

To obtain theoretical guarantees beyond the monotone setting, a common approach is to relax the lower bound \eqref{eq:mono-ass} in the monotone assumption. Along this research line, several more general assumptions have been proposed, such as the {\em pseudo-monotone} assumption \cite{karamardian1976complementarity,hadjisavvas2012pseudomonotone} and its variants \cite{kannan2019optimal}, and the {\em generalized monotone} assumption \cite{dang2015convergence}. In the machine learning community, similar concepts have also been proposed, such as variational coherence \cite{zhou2017stochastic,zhou2020convergence}. For simplicity, we coin the problem class along this research line as \emph{coherent non-monotone variational inequalities}. Among them, \cite{dang2015convergence} is the first to provide explicit global convergence results such that the best iterate of the N-EG method \cite{dang2015convergence} can converge to an $\epsilon$-accurate strong solution in $O(1/\epsilon^2)$ iterations under the generalized monotone and Lipschitz continuous assumptions. However, N-EG needs to evaluate gradient {\em twice per iteration}, which is less desirable when gradient evaluation is expensive. For the single-call extragradient method \cite{chiang2012online}, under a second-order condition\footnote{As we will see, it is a localized version of our assumption.}, very recently \cite{hsieh2019convergence} has provided local linear convergence results in certain non-monotone setting, while the constants in these results remain implicit. The following problem remains open: \emph{Can single-call extragradient methods have explicit global convergence results beyond the monotone setting?}

\begin{savenotes}
\begin{table*}[htbp]
\caption{\textbf{Iteration complexity for finding an  $\epsilon$-accurate solution in the deterministic setting.} (In both Tables \ref{tb:iteration} and \ref{tb:sample}, ``---'' denotes the corresponding results are not known or can not be obtained.)}
\label{tb:iteration}
\centering
\begin{tabular}{|c|c|c|c|}
\hline
Convergence measure & \multicolumn{2}{|c|}{Merit function (Definition \ref{def:appro-strong})}  &  \multicolumn{1}{|c|}{Distance $\|\cdot- \vw^*\|^2$}\\
\hline
\multirow{2}{*}{Algorithm}             &       N-EG       & OptDE       & OptDE   \\
          &     \cite{dang2015convergence}   & (\textbf{this Paper})  & (\textbf{this Paper}) \\
\hline
Weak solution exists & $O(1/\epsilon^2)$   & $O(1/\epsilon^2)$   & ---   \\
\hline
$\sigma$-weak solution exists  & ---   & $O(\log \frac{1}{\epsilon})$ &  $O(\log \frac{1}{\epsilon})$ \\
\hline
No. of gradient calls  & 2    & 1  & 1  \\
\hline
\end{tabular}
\end{table*}
\end{savenotes}
\begin{table*}[!htbp]
\caption{\textbf{Stochastic oracle complexity for finding an expected $\epsilon$-accurate solution in the stochastic setting.}}\label{tb:sample}
\centering
\begin{tabular}{|c|c|c|c|c|}
\hline
Convergence measure &  \multicolumn{2}{|c|}{Merit function (Definition \ref{def:appro-strong})}  &  \multicolumn{2}{|c|}{Distance $\E[\|\cdot - \vw^*\|^2]$}\\
\hline
\multirow{2}{*}{Algorithm}   &  SEG                          &    SOptDE     &   ESA   &    SOptDE    \\
     &   \cite{iusem2017extragradient}      & (\textbf{this paper})  &  \cite{kannan2019optimal} &  (\textbf{this paper})\\
\hline
Weak solution exists& $O(1/\epsilon^4)$   &  $O(1/\epsilon^4)$    &   ---  &  --- \\
\hline
$\sigma$-weak solution exists & ---   &    $O(1/\epsilon^2\log\frac{1}{\epsilon})$               & $O(1/\epsilon)$  & $O(1/\epsilon)$\\
\hline
No. of gradient calls & 2   &     1        &  2   & 1 \\
\hline
\end{tabular}
\vspace{-5mm}
\end{table*}
\vspace{-2mm}

\paragraph{Contributions of This Paper.}
In this paper we develop an \emph{Optimistic Dual Extrapolation (OptDE)} method that provably converges to {a strong solution for coherent non-monotone VIPs}. The OptDE method can be viewed as a single-call variant of Nesterov's dual extrapolation that maintains its “anticipatory” properties. 
We characterize convergence rates of the best iterate\footnote{For given a number of iterations, the best iterate can be explicitly found and  happen before the last iterate.} of OptDE under two coherent non-monotone assumptions, where the merit function is given in Definition \ref{def:appro-strong} and  $\|\cdot\|$ is the natural norm used in algorithms. As shown in Table \ref{tb:iteration}, when the problem has a \emph{weak solution} $\vw^*$, our method matches the best known rate $O({1}/{\epsilon^2})$ of N-EG \cite{dang2015convergence}. 
Further strengthening the assumption to that a {\em $\sigma$-weak solution} $\vw^*$ exists with $\sigma>0$ -- nevertheless a weaker condition than the strongly monotone assumption required in previous work, we are able to obtain a linear convergence rate of $O(\log\frac{1}{\epsilon})$. For this setting, we can also use the  distance $\|\cdot - \vw^*\|^2$ to measure the progress and obtain a linear convergence result; meanwhile, despite not shown in Table \ref{tb:iteration}, we also obtain a linear convergence result of the last iterate. Our result shows that even under the two coherent non-monotone assumptions, the convergence rate of single-call extragradient methods can be comparable to that of the N-EG method with two gradient evaluations per iteration.

Our coherent non-monotone analysis for the setting that a $\sigma$-weak solution exists has two meaningful corollaries about  best iterate and  last iterate \emph{in the monotone setting}, respectively: With \emph{a regularization trick}, both the best iterate and last iterate\footnote{Here the last iterate is not in the classical sense, which will be explained in Section \ref{sec:optde}.} of OptDE can be an $\epsilon$-accurate solution in $O(\frac{1}{\epsilon}\log\frac{1}{\epsilon})$ number of iterations.
To our knowledge, the near-optimal result $O(\frac{1}{\epsilon}\log\frac{1}{\epsilon})$ for attaining an $\epsilon$-accurate strong solution was only appeared in  \cite{diakonikolas2020halpern} very recently with a two-loop Halpern iteration method, while our result is obtained by the simpler single-loop single-call OptDE method.

\vspace{-1mm}
Meanwhile, we extend the OptDE algorithm to the stochastic setting as \emph{Stochastic OptDE (SOptDE)} and show that our results in the deterministic setting can be naturally generalized to the {\em stochastic setting}. This allows us to characterize the stochastic oracle complexity (\emph{i.e.,} the number of stochastic oracles we access) of SOptDE under the coherent non-monotone assumptions. 
The results under the stochastic setting are summarized in Table \ref{tb:sample}.\footnote{The results of the SEG \cite{iusem2017extragradient}, ESA \cite{kannan2019optimal}  algorithms are given under pseudomonotone and strongly pseudomonotone assumptions respectively, which are slightly stronger than our assumptions.} As we see, the results match the best-known results of SEG \footnote{The original result of SEG is given by ``square natural residual'', which can be used to derive the strong solution guarantee in Table \ref{tb:sample} (see the supplementary material for detail). } \cite{iusem2017extragradient} and ESA \cite{kannan2019optimal} respectively, while both SEG and ESA need two gradient evaluations per iteration. Meanwhile, under the assumption that a $\sigma$-weak solution exists, we obtain the first theoretical guarantee in terms of the merit function in Definition \ref{def:appro-strong}.

\vspace{-1mm}
Last but not least, different from N-EG  \cite{dang2015convergence} and  ESA  \cite{kannan2019optimal}, the proposed OptDE and SOptDE algorithms only need the norm square $\|\cdot\|^2$ being strongly convex but not necessarily globally Lipschitz continuous, which will be significant if $\|\cdot\|$ is a non-Euclidean norm: $\|\cdot\|^2$ can not be strongly convex and globally Lipschitz continuous simultaneously in general.

\section{Technical Assumptions}\label{sec:cont}
\vspace{-1mm}

\textbf{Notations:} For $K\in\mathbb{Z_+}$, let $[K]:= \{1,2,\ldots, K\}.$ Let lower case boldface alphabets denote vectors, such as $\vx\in\sR^d$ and lower case alphabets with subscript denote elements, such as $x_1, x_2, \ldots, x_d$. Let $\|\cdot\|$ denote a general norm. Let $\|\cdot\|_*$ denote the dual norm of $\|\cdot\|$ defined by $\|\vy\|_*:=\max_{\|\vx\|\le 1}\langle\vx, \vy\rangle.$ For $\vx\in\sR^d$ and $p\ge 1$, let $\|\vx\|_p:= \big(\sum_{i=1}^d |x_i|^p\big)^{\frac{1}{p}}.$

To measure the accuracy of iterates to a strong solution, we consider the following ``restricted strong merit function''.  
\begin{definition}[Restricted strong merit function]\label{def:appro-strong}
$\tilde{\vw}\in\gW$  is an $\epsilon$-accurate strong solution of the VIP$(F,\gW)$ with a fixed parameter $D>0$ if 
\begin{eqnarray}\!\!\!
\sup_{
 \vw\in\gW,  \|\vw-\tilde{\vw}\|_2\le D
}\langle F(\tilde{\vw}), \tilde{\vw}-\vw\rangle \le \epsilon.  \label{eq:appro-strong}
\end{eqnarray} 
\end{definition}
With $\epsilon\to0$ and $D\to+\infty$, Definition \ref{def:appro-strong} becomes the definition of the strong solution in \eqref{eq:vi-prop}. In the nonconvex-nonconcave minimax setting, Definition \ref{def:appro-strong} has been proposed as the definition of the $\epsilon$-accurate first-order Nash equilibrium \cite{nouiehed2019solving}.
If $\gW$ is a bounded set, then we still have an effective measure even if $D\rightarrow +\infty$; if $\gW$ is unbounded, then $D$ needs to be a finite positive parameter. To give a unified measure for both bounded and unbounded settings, we set $D$ to be a finite positive parameter.

Throughout this paper, we make the following standard Lipschitz continuous assumption.
\begin{assumption}\label{ass:lip}
For the VIP$(F,\gW)$ in \eqref{eq:vi-prop}, $\forall \vw,\vv\in\gW,$  $\|F(\vw)-F(\vv)\|_*\le L \|\vw-\vv\|,$ where $L>0$ is the Lipschitz constant.
\end{assumption}
Meanwhile, we assume that the (possible non-Euclidean) norm $\|\cdot\|$ satisfies Assumption \ref{ass:non-Euclidean}. 
\begin{assumption}\label{ass:non-Euclidean}
$\frac{1}{2}\|\vw\|^2$ is $\gamma$-strongly convex ($0<\gamma\le 1$) with respect to  (w.r.t.) $\|\cdot\|$ and the dual norm of gradient $\nabla \frac{1}{2}\|\vw\|^2$ is bounded by $\delta\|\vw\| (\delta>0)$:
\begin{eqnarray}
\frac{1}{2}\|\vw\|^2  &\ge& \frac{1}{2}\|\vv\|^2 + \langle\nabla\frac{1}{2}\|\vv\|^2, \vw-\vv\rangle + \frac{\gamma}{2}\|\vw-\vv\|^2, \label{eq:norm-uni}\\
\Big\|\nabla \frac{1}{2}\|\vw\|^2 \Big\|_*&\le& \delta \|\vw\|. \label{eq:grad-norm}
\end{eqnarray}
\end{assumption}
From \cite{ball1994sharp},  $\frac{1}{2}\|\cdot\|_p^2 (1<p\le 2)$ is $(p-1)$-strongly convex $w.r.t.$ $\|\cdot\|_p$. Without loss of generality, in Assumption \ref{ass:non-Euclidean}, we assume $0<\gamma\le 1.$ For all the norm setting $\frac{1}{2}\|\cdot\|_p^2  (1<p\le 2)$, we have $\delta=1.$

For the norm $\|\cdot\|$,  we define the prox-mapping as
\begin{eqnarray}
P_{\vv}(\vw) := \argmin_{\vz\in \gW}\Big\{ \langle \vw, \vz\rangle + \frac{1}{2\gamma}\|\vz-\vv\|^2\Big\}, \label{eq:proximal-mapping}
\end{eqnarray}
and assume that it can be solved efficiently. Meanwhile, we also define the corresponding Bregman divergence of $\frac{1}{2}\|\cdot\|^2$: $\forall \vw,\vv\in\gW,$ 
\begin{equation}
V_{\vv}(\vw) := \frac{1}{2}\|\vw\|^2 -\frac{1}{2}\|\vv\|^2 - \big\langle\nabla\frac{1}{2}\|\vv\|^2, \vw-\vv\big\rangle.  \label{eq:Breg}
\end{equation}
Obviously we have $V_{\vv}(\vw)\ge \frac{\gamma}{2}\|\vw-\vv\|^2.$

Then we make Assumptions \ref{ass:weak} and \ref{ass:strong-weak} for the coherent non-monotone VIP$(F,\gW)$ we study. 
\begin{assumption}[Existence of a weak solution]\label{ass:weak}
For the VIP$(F,\gW)$ in \eqref{eq:vi-prop}, there exists a weak solution $\vw^*\in\gW$ such that $\forall \vw\in\gW,$ $\langle F(\vw) , \vw-\vw^*\rangle \ge 0$. 
\end{assumption}
\begin{assumption}[Existence of a $\sigma$-weak solution]\label{ass:strong-weak}
For the VIP$(F,\gW)$ in \eqref{eq:vi-prop}, given $\vw_0\in\gW,$ there exists a $\sigma$-weak solution $\vw^*\in\gW$ with parameter $\sigma>0$ such that $\forall \vw\in\gW,$ $\langle F(\vw) , \vw-\vw^*\rangle \ge  \frac{\sigma}{\gamma}(V_{\vw-\vw_0}(\vw^*-\vw_0)+V_{\vw^*-\vw_0}(\vw-\vw_0))$.
\end{assumption}

Assumption \ref{ass:weak} assumes the existence of weak solutions, which is also adopted in \cite{lin2018solving}. Assumption \ref{ass:weak} is slightly weaker than the variational coherence assumption \cite{zhou2017stochastic,zhou2020convergence} or the generalized monotone assumption \cite{dang2015convergence}. Some nontrivial examples satisfying the generalized monotone assumption can be found in \cite{dang2015convergence,zhou2017mirror,mertikopoulos2019learning}. The generalized monotone assumption is in turn weaker than the pseudo-monotone assumption \cite{karamardian1976complementarity,hadjisavvas2012pseudomonotone}, which is weaker than the monotone assumption \eqref{eq:mono-ass}. 

\begin{remark}
In the monotone setting, the weak solution set and strong solution set are equivalent to each other; meanwhile, an approximate strong solution is also an approximate weak solution, while the reverse does not hold in general (which can explain the terms ``weak'' and ``strong''). However, in the non-monotone setting, if the operator $F$ is continuous, a weak solution is a strong solution, while the reverse is not true in general \cite[Chapter 3]{kinderlehrer2000introduction}. For instance, consider the minimax problem $\min_{x\in\sR}\max_{y\in\sR}x^2y^2$ and let $F(x, y) = (2xy^2, -2yx^2)^T$ with $(x, y)\in\sR^2$. Then we can verify that $(0, 0)$ is the only weak solution of VIP$(F, \sR^2)$, while the set of strong solution is the $x$-axis or the $y$-axis, and the set of Nash equilibrium is the $y$-axis. 
\end{remark}

Assumption \ref{ass:strong-weak} further assumes a stronger variant of Assumption \ref{ass:weak}, which is also called as strongly variational stability in  \cite{zhou2017stochastic}. For the Euclidean setting where $\|\cdot\|:=\|\cdot\|_2$ and thus $\gamma=1$, the inequality is simplified to  $\langle F(\vw) , \vw-\vw^*\rangle \ge \sigma \| \vw - \vw^*\|_2^2$.
Assumption \ref{ass:strong-weak} is weaker than the strongly pseudo-monotone \cite{kannan2019optimal} and strongly monotone assumptions, but as we will see, is already sufficient to ensure a linear convergence rate for our method. 

\vspace{-2mm}

\begin{remark}
Our main motivation in making Assumptions \ref{ass:weak} and \ref{ass:strong-weak} is to prove explicit global convergence results for VIP$(F,\gW)$  under conditions \emph{as weak as possible}. However, the non-monotone subsets of Assumptions \ref{ass:weak} and \ref{ass:strong-weak}, \emph{a.k.a.,} pseudomonotone and strongly pseudomonotone respectively, also have many real applications in competitive exchange economy \cite{brighi2002characterizations}, fractional programming \cite{elizarov2009maximization, rousseau2005trade}, and product pricing \cite{choi1990product}. Meanwhile, the restriction of Assumption \ref{ass:strong-weak} in minimization problems such as one-point convexity \cite{li2017convergence} is also used in analyzing neural networks.
\end{remark}

\vspace{-2mm}

\section{Optimistic Dual Extrapolation}\label{sec:optde}
\vspace{-1mm}

\begin{algorithm}[t!]
\caption{Optimistic Dual Extrapolation}\label{alg:ode}
\begin{algorithmic}[1]
\STATE \textbf{Input: } Lipschitz constant $L >0$ from Assumption \ref{ass:lip}, $\gamma, \delta>0$ from Assumption \ref{ass:non-Euclidean}. The VIP$(F,\gW)$ satisfying Assumption \ref{ass:weak} ($\sigma = 0$) or  Assumption \ref{ass:strong-weak} ($\sigma > 0$). 
\STATE $A_0 = 0,$
$0<\alpha \le \min\Big\{\frac{1}{4\sqrt{2}}, \frac{\sqrt{3}}{4\sqrt{\gamma}}  \Big\}$. 
\STATE $\vw_0=\vz_0\in\gW,  \vg_0 = \vzero. $ 
\vspace{0.015in}
\FOR{ $k = 1,2,3,\ldots, K$}
\STATE $a_k = \frac{\alpha\gamma(1+\sigma A_{k-1})}{L}, A_k = A_{k-1} + a_{k}$.
\STATE $\vw_{k} = P_{\vz_{k-1}}\big(\frac{{\alpha}}{L}  F(\vw_{k-1})\big).$
\STATE $\vg_k = \vg_{k-1} + a_k\big(F(\vw_k) - \frac{\sigma}{\gamma} \nabla_{\vw_k}\frac{1}{2}\|\vw_k-\vw_0\|^2\big).$
\STATE $\vz_{k}= P_{\vw_0}\big(\frac{1}{1+\sigma A_k} \vg_k\big).$
\ENDFOR
\STATE $\tilde{\vw}_K =\argmin_{\vw_k:k\in[K]}(\|\vw_k-\vz_{k-1}\| + \|\vw_{k-1}-\vz_{k-1}\|)$.
\STATE \textbf{return} $\tilde{\vw}_K.$
\end{algorithmic}	
\end{algorithm}

In this section, we present the \emph{optimistic dual extrapolation (OptDE)}  algorithm for solving the VIP$(F,\gW)$ in \eqref{eq:vi-prop}. The method is a single-call variant of Nesterov's dual extrapolation  \cite{nesterov2007dual}. The overall algorithm is summarized as  Algorithm \ref{alg:ode}. The algorithm works under either Assumption \ref{ass:weak} by setting $\sigma = 0$ or Assumption  \ref{ass:strong-weak} with $\sigma > 0$.

For Algorithm \ref{alg:ode}, we define two constants $A_0$ and $\alpha$ in Step 2. Then we initialize three vectors $\vw_0, \vz_0$ and $\vg_0$ in Step 3. In the main loop, we update the two positive numbers $a_k$ and $A_k$ in Step 5. Then we perform an ``extrapolation'' step in Step 6 and then ``dual averaging'' steps in Steps 7 and 8. As we see, as Algorithm \ref{alg:ode} only performs one new gradient evaluation in Step $8$, it is ``optimistic'' \cite{rakhlin2013online} hence the name ``optimistic dual extrapolation''. Once Algorithm \ref{alg:ode} runs $K$ iterations, we return the best iterate measured by the sum of residual norms $\|\vw_k-\vz_{k-1}\| + \|\vw_{k-1}-\vz_{k-1}\|$\footnote{This return value is given according to our convergence analysis.}.

Compared with Nesterov's dual extrapolation, the main difference is that the extrapolation Step 6 is a prox-mapping on $F(\vw_{k-1})$, not on  $F(\vz_{k-1})$. Compared with past extra-gradient \cite{hsieh2019convergence,rakhlin2013online}, the main difference is that we perform dual averaging by Steps 7 and 8, instead of a ``mirror descent'' step. Compared with N-EG which is claimed to be a non-Euclidean extragradient method \cite{dang2015convergence}, not only we perform just one gradient evaluation per iteration but also do not require  $\frac{1}{2}\|\cdot\|^2$ to have bounded Lipschitz continuous gradients, which is significant in the non-Euclidean setting since the norm square $\frac{1}{2}\|\cdot\|_p^2$ for $p \in (1,2)$ may not have globally bounded Lipschitz continuous gradients.

In the following, we assume  $\vw^*$ is a solution that satisfies Assumption \ref{ass:weak} if $\sigma =0 $ or satisfies Assumption \ref{ass:strong-weak} if $\sigma>0$.

\vspace{-2mm}

\begin{theorem}\label{thm:ode}
Let Assumptions \ref{ass:lip} and \ref{ass:non-Euclidean} hold.  For both settings $\sigma =0$ ($i.e.,$ Assumption \ref{ass:weak} holds) and $\sigma>0$ ($i.e.,$ Assumption \ref{ass:strong-weak} holds), after $K$ iterations, Algorithm \ref{alg:ode} returns a $\tilde{\vw}_K$ such that
\begin{align}
\sup_{\vw\in\gW,\|\tilde{\vw}_K-\vw\|\le D
}\langle F(\tilde{\vw}_K), \tilde{\vw}_K-\vw\rangle
 \le C_0D \|\vw_0 -\vw^*\|\sqrt{ \frac{L}{A_{K-1}+a_1}},  \label{eq:thm-ode-strong-2}
\end{align}
with $C_0 =\big(1+ \frac{\delta}{{\alpha\gamma}}\big)\sqrt{\frac{8\alpha}{\gamma}},$ $a_1 = \frac{{\alpha\gamma}}{L}$,
and 
\begin{equation}
 A_{K-1} = \begin{cases}
\frac{{\alpha\gamma}(K-1)}{L} & \rm{if}\; \sigma =0, \\
\frac{1}{\sigma}\Big(1+\frac{{\alpha\gamma}\sigma}{L}\Big)^{K-1}-\frac{1}{\sigma}  & \rm{if}\; \sigma>0.
\end{cases}   \label{eq:A-k}
\end{equation}
Particularly if $\sigma>0,$  we also have
\begin{eqnarray}
\|\tilde{\vw}_K-\vw^*\|\le \frac{C_0}{\sigma}\|\vw_0 -\vw^*\|\sqrt{ \frac{L}{A_{K-1} + a_1}}.\label{eq:ode-distance}
\end{eqnarray}

\end{theorem}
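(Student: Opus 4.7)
The plan is to combine three ingredients into a weighted telescoping argument: (i) two three-point inequalities from the optimality conditions of the prox-mappings in Steps~6 and~8 of Algorithm~\ref{alg:ode}; (ii) a Lipschitz-based control of the ``optimism error'' $\langle F(\vw_{k-1})-F(\vw_k),\vw_k-\vz_k\rangle$ arising because Step~6 evaluates the gradient at the stale point $\vw_{k-1}$; and (iii) the coherent non-monotonicity from Assumption~\ref{ass:weak} or~\ref{ass:strong-weak}, used to lower-bound $\langle F(\vw_k),\vw_k-\vw^*\rangle$. The overall target is an aggregate inequality whose LHS is a weighted sum of residual squares $\|\vw_k-\vz_{k-1}\|^2+\|\vw_{k-1}-\vz_{k-1}\|^2$ and whose RHS is proportional to $\|\vw_0-\vw^*\|^2$.

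First I would derive the per-step three-point inequalities. For $\vw_k=P_{\vz_{k-1}}(\tfrac{\alpha}{L}F(\vw_{k-1}))$, first-order optimality combined with the $\gamma$-strong convexity of $\tfrac12\|\cdot\|^2$ yields, for every $\vw\in\gW$, $\tfrac{\alpha}{L}\langle F(\vw_{k-1}),\vw_k-\vw\rangle \le \tfrac{1}{2\gamma}\|\vw-\vz_{k-1}\|^2 - \tfrac{1}{2\gamma}\|\vw_k-\vz_{k-1}\|^2 - \tfrac{1}{2}\|\vw-\vw_k\|^2$, with an analogous bound for $\vz_k=P_{\vw_0}(\tfrac{1}{1+\sigma A_k}\vg_k)$ scaled by $(1+\sigma A_k)$. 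Substituting $\vw=\vz_k$ into the first inequality, splitting $\langle F(\vw_{k-1}),\vw_k-\vz_k\rangle=\langle F(\vw_k),\vw_k-\vz_k\rangle+\langle F(\vw_{k-1})-F(\vw_k),\vw_k-\vz_k\rangle$, and bounding the optimism error by $L\|\vw_k-\vw_{k-1}\|\cdot\|\vw_k-\vz_k\|$ via Assumption~\ref{ass:lip} and Young's inequality, I obtain a recursion in which the parameter choice $\alpha\le\min\{\tfrac{1}{4\sqrt2},\tfrac{\sqrt3}{4\sqrt\gamma}\}$ is exactly tuned to absorb the optimism error into the negative quadratic terms.

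Next I would invoke the coherent non-monotone assumption at $\vw^*$. For $\sigma=0$ (Assumption~\ref{ass:weak}), $\langle F(\vw_k),\vw_k-\vw^*\rangle\ge 0$, and summing with weights $a_k$ gives an aggregate bound of the form $\sum_{k=1}^{K} a_k\bigl(\|\vw_k-\vz_{k-1}\|^2+\|\vw_{k-1}-\vz_{k-1}\|^2\bigr)\lesssim \tfrac{L}{\alpha\gamma}\|\vw_0-\vw^*\|^2$. For $\sigma>0$ (Assumption~\ref{ass:strong-weak}), the Bregman terms $\tfrac{\sigma}{\gamma}\bigl(V_{\vw_k-\vw_0}(\vw^*-\vw_0)+V_{\vw^*-\vw_0}(\vw_k-\vw_0)\bigr)$ are engineered to cancel exactly against the regularizer $-\tfrac{\sigma}{\gamma}\nabla\tfrac12\|\vw_k-\vw_0\|^2$ embedded in the dual-averaging update of $\vg_k$ (Step~7); this cancellation propagates the factor $1+\sigma A_k$ through the telescope, and combined with $a_k=\tfrac{\alpha\gamma(1+\sigma A_{k-1})}{L}$ yields the geometric recursion $1+\sigma A_k=(1+\tfrac{\alpha\gamma\sigma}{L})(1+\sigma A_{k-1})$, i.e., \eqref{eq:A-k}.

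Finally, for the merit-function bound at $\tilde{\vw}_K=\vw_k$ (the best index), I would split $\langle F(\tilde{\vw}_K),\tilde{\vw}_K-\vw\rangle = \langle F(\vw_{k-1}),\vw_k-\vw\rangle + \langle F(\tilde{\vw}_K)-F(\vw_{k-1}),\vw_k-\vw\rangle$. The second term is at most $LD\bigl(\|\vw_k-\vz_{k-1}\|+\|\vw_{k-1}-\vz_{k-1}\|\bigr)$ by Assumption~\ref{ass:lip}. For the first, optimality of $\vw_k$ lets me substitute $\tfrac{\alpha}{L}F(\vw_{k-1})$ by $-\tfrac{1}{\gamma}\nabla\tfrac12\|\vw_k-\vz_{k-1}\|^2$ (plus a normal-cone element at $\vw_k$), whose dual norm is bounded by $\tfrac{\delta}{\gamma}\|\vw_k-\vz_{k-1}\|$ via \eqref{eq:grad-norm}; this contributes the factor $(1+\tfrac{\delta}{\alpha\gamma})$ in $C_0$. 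Because $\tilde{\vw}_K$ minimizes the residual sum over $k\in[K]$, its squared residual is at most the aggregate of the previous paragraph divided by $\sum_{k=1}^{K}a_k=A_K\ge A_{K-1}+a_1$, yielding the $\sqrt{L/(A_{K-1}+a_1)}$ factor in \eqref{eq:thm-ode-strong-2}. The distance bound \eqref{eq:ode-distance} for $\sigma>0$ then follows by applying Assumption~\ref{ass:strong-weak} to $\tilde{\vw}_K$ at $\vw=\vw^*$, which reduces to $\sigma\|\tilde{\vw}_K-\vw^*\|^2$ being bounded by the merit-function upper bound evaluated at $\vw=\vw^*$. The main obstacle throughout is the simultaneous balancing act --- choosing $\alpha$ small enough to absorb the optimism error, aligning the $\sigma$-regularizer inside $\vg_k$ with the Bregman terms supplied by Assumption~\ref{ass:strong-weak}, and threading the $(1+\sigma A_k)$ factor cleanly through the telescoping sum to deliver the explicit recursion \eqref{eq:A-k} without losing constants.
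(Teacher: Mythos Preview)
Your proposal is correct and follows essentially the same route as the paper: the paper packages your ingredients (i)--(iii) as three lemmas (the estimation-sequence inequality for $\vz_k$, the bound $\sum_k E_{1k}\le -\tfrac{L}{8\alpha}\sum_k a_{k-1}(\|\vw_k-\vz_{k-1}\|^2+\|\vw_{k-1}-\vz_{k-1}\|^2)$ via the Lipschitz/Young argument you sketch, and the residual-to-merit bound yielding the factor $(1+\tfrac{\delta}{\alpha\gamma})$), then combines them exactly as in your final two paragraphs. One small bookkeeping point you glossed over: after the telescoping the residual weights come out as $a_{k-1}$ rather than $a_k$ (because the optimism error at step $k$ produces a $+\|\vw_{k-1}-\vz_{k-1}\|^2$ term that must be absorbed by the negative term from step $k-1$, which is precisely why the constraint on $\alpha$ involves $\sqrt{a_{k-1}/a_k}$), so the denominator is $\sum_{k=1}^K a_{k-1}=A_{K-1}+a_1$ directly, not via $A_K\ge A_{K-1}+a_1$.
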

\begin{proof}
See Section \ref{sec:thm:ode}. 
\end{proof}
Theorem \ref{thm:ode} implies our main result in Table  \ref{tb:iteration}. As we see, for $\sigma = 0, $ except for constants, our result is the same with the two-call extragradient method N-EG \cite{dang2015convergence}. However, to analyze single-call methods, particularly for the setting $\sigma=0$, the analysis is much more involved and leads to an interesting criterion of return value in Step 10 of Algorithm \ref{alg:ode}. For the setting $\sigma>0,$ then linear convergence rates can be obtained in terms of both restricted strong merit solution and solution distance.  
Meanwhile, for the setting $\sigma>0$, our result in terms of restricted strong merit solution \eqref{eq:thm-ode-strong-2} can not be implied by the result of the solution distance \eqref{eq:ode-distance}, while the reverse side is true. 
Furthermore, when $\sigma>0$, the result \eqref{eq:thm-ode-strong-2} is also used in deriving Corollary \ref{coro:best} for the monotone setting. Finally, to simplify our analysis, we did not yet optimize the constants in
\eqref{eq:thm-ode-strong-2} and \eqref{eq:ode-distance}, which probably can be further improved.

In Theorem \ref{thm:ode}, we provide a unified result for the two settings $\sigma =0 $ and $\sigma>0$ in terms of the best iterate. However, when $\sigma>0,$ we can also prove linear convergence rates in terms of last iterate, which is given in Proposition \ref{prop:ode} below.

\vspace{-2mm}

\begin{proposition}\label{prop:ode}
Let Assumptions \ref{ass:lip} and \ref{ass:non-Euclidean} hold.  For the setting $\sigma>0$ (\emph{i.e.,} Assumption \ref{ass:strong-weak} holds), $\forall K\ge 1,$  after $K$ iterations, Algorithm \ref{alg:ode} returns a ${\vw}_K$ such that
\begin{align}
\sup_{\vw\in\gW,\|{\vw}_K-\vw\|\le D
}\langle F({\vw}_K), {\vw}_K-\vw\rangle
 \le C_0D \|\vw_0 -\vw^*\|\sqrt{ \frac{L}{a_{K-1}}},  \label{eq:thm-ode-strong-2-v2}
\end{align}
with $C_0$ defined in Theorem \ref{thm:ode}, $a_0=a_1$ and $\forall K\ge 1,$
\begin{equation}
a_{K} = \frac{\alpha\gamma}{L}\Big(1+ \frac{\alpha\gamma\sigma}{L}\Big)^{K-1}.   \label{eq:a-K-1}
\end{equation}
Meanwhile, we also have
\begin{eqnarray}
\|{\vw}_K-\vw^*\|\le \frac{C_0}{\sigma}\|\vw_0 -\vw^*\|\sqrt{ \frac{L}{a_{K-1}}}.\label{eq:ode-distance-v2}
\end{eqnarray}
\end{proposition}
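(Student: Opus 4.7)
The plan is to reuse the per-iteration descent inequality established in the proof of Theorem \ref{thm:ode}. That proof telescopes a potential based on the Bregman divergence $V_{\vw_0}(\vw^*)$ together with the strong-convexity term supplied by Assumption \ref{ass:strong-weak} to produce a weighted summed bound of the schematic form
\[
\sum_{k=1}^{K} a_{k-1}\bigl(\|\vw_k - \vz_{k-1}\|^2 + \|\vw_{k-1} - \vz_{k-1}\|^2\bigr) \;\le\; C_1\,\|\vw_0 - \vw^*\|^2,
\]
where $C_1$ depends only on $\alpha, \gamma, \delta$. Dividing by $A_K$ and invoking the selection rule in step 10 of Algorithm \ref{alg:ode} yields the best-iterate bound of Theorem \ref{thm:ode}.

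To upgrade to a last-iterate statement, I would exploit the geometric growth of the weights in the $\sigma > 0$ regime: the closed form $a_k = (\alpha\gamma/L)(1 + \alpha\gamma\sigma/L)^{k-1}$ gives $a_{k+1}/a_k = 1 + \alpha\gamma\sigma/L > 1$, so the largest summand alone already accounts for a constant fraction of the total. Dropping all but the index $k = K$ yields
\[
\|\vw_K - \vz_{K-1}\|^2 + \|\vw_{K-1} - \vz_{K-1}\|^2 \;\le\; \frac{C_1\,\|\vw_0 - \vw^*\|^2}{a_{K-1}}.
\]
This is exactly the residual used in step 10 to pick $\tilde{\vw}_K$, but now localized at $\vw_K$ itself. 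The same residual-to-merit conversion that yields \eqref{eq:thm-ode-strong-2} in Theorem \ref{thm:ode} -- combining Assumption \ref{ass:lip}, the prox-mapping optimality condition in step 6, and the bound \eqref{eq:grad-norm} -- then converts this pointwise estimate into \eqref{eq:thm-ode-strong-2-v2}, keeping the constant $C_0$ unchanged. The distance bound \eqref{eq:ode-distance-v2} follows by substituting $\vw = \vw^*$ into Assumption \ref{ass:strong-weak} at $\vw_K$: the merit inequality upper bounds $\langle F(\vw_K), \vw_K - \vw^*\rangle$ by \eqref{eq:thm-ode-strong-2-v2}, while the lower bound of Assumption \ref{ass:strong-weak} contributes the factor $\sigma$ that appears in the denominator.

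The main obstacle is verifying that the per-iteration descent inequality in the proof of Theorem \ref{thm:ode} has all its $a_k$-weighted residual coefficients non-negative; this is essential because keeping only a single summand is valid only when every discarded term is non-negative. A secondary technical point is that the supremum in Definition \ref{def:appro-strong} is taken over a ball of radius $D$ around $\vw_K$ rather than around $\vw^*$, so the residual-to-merit conversion must be invariant under a change of basepoint -- which works out because Assumption \ref{ass:lip} controls only pairwise differences of $F$ and the $\delta$-bound in \eqref{eq:grad-norm} is homogeneous.
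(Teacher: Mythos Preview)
Your proposal is correct and matches the paper's own proof almost exactly: the paper too reuses the summed residual bound from Lemmas \ref{lem:ode-es}--\ref{lem:ode-E} (i.e., inequality \eqref{eq:es-5}), drops every summand except the one at index $K$ to obtain $\frac{L}{16\alpha}a_{K-1}(\|\vw_K-\vz_{K-1}\|+\|\vw_{K-1}-\vz_{K-1}\|)^2\le\frac{1}{2\gamma}\|\vw^*-\vw_0\|^2$, and then feeds that into Lemma \ref{lem:ode-strong-dist} to get \eqref{eq:thm-ode-strong-2-v2} and \eqref{eq:ode-distance-v2}. One small remark: your detour through the geometric growth of $\{a_k\}$ is unnecessary---since every term in the sum is nonnegative (this is exactly the content of Lemma \ref{lem:ode-E}, which you correctly flag as the ``main obstacle''), dropping all but the $K$-th term is legitimate regardless of how fast the weights grow, and indeed the paper never invokes the ratio $a_{k+1}/a_k$.
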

\vspace{-2mm}
\begin{proof}
See Section \ref{sec:prop:ode}. 
\end{proof}

By Proposition \ref{prop:ode}, to prove the linear convergence of the last iterate, we do not need the strongly monotone assumption, but only Assumption \ref{ass:strong-weak}. Despite the last iterate also has a linear convergence rate, it is slower than the rate of best iterate in Theorem \ref{thm:ode}. As we will see, Proposition \ref{prop:ode} will also be used to prove the last iterate convergence for the monotone setting in a non-classical sense.

\begin{remark}
The motivation behind OptDE is that by generalizing Nesterov's estimation sequence, we can perform a unified convergence analysis under Assumptions \ref{ass:weak} and \ref{ass:strong-weak}. However, as shown in \cite{xiao2010dual}, if a regularizer exists, the (regularized) dual averaging steps (Steps 7 and 8 of Algorithm \ref{alg:ode}) can help us better explore the structure of regularizers such as sparsity when it exists. 
\end{remark}

\begin{remark}
\cite{hsieh2019convergence} has given local convergence analysis in terms of solution distance by assuming that Assumption 4 holds in a neighbourhood of the optimal solution. The analysis in \cite{hsieh2019convergence} needs extra techniques, while the constants in the rates of \cite{hsieh2019convergence} are implicit. Our solution distance result in \eqref{eq:ode-distance} can be viewed as a global and explicit version of \cite{hsieh2019convergence} by assuming Assumption 4 holds globally. Meanwhile, \cite{hsieh2019convergence} does not give any result under Assumption \ref{ass:weak} or in terms of restricted strong solution under Assumption \ref{ass:strong-weak} whereas our analysis does. 
\end{remark}

Our results are mainly given under the coherent non-monotone Assumptions \ref{ass:weak} and \ref{ass:strong-weak}. As shown in Theorem \ref{thm:ode}, under Assumption \ref{ass:weak} that includes the monotone assumption, we can obtain an $\epsilon$-accurate strong solution in $O(\epsilon^{-2})$ iterations. However, in the following we show that with \emph{a regularization trick}, the rate can be much better in the monotone setting by using our results in Theorem \ref{thm:ode} and Proposition \ref{prop:ode}.

First, to give our results in the monotone setting, we have  Lemma \ref{lem:monotone-strong-weak}. 
\begin{lemma}\label{lem:monotone-strong-weak}
If the VIP$(F,\gW)$ is monotone, then the regularized problem VIP$(F+\epsilon\nabla \frac{1}{2\gamma}\|\cdot -\vw_0\|^2,\gW)$ satisfies Assumption \ref{ass:strong-weak} with $\sigma = \epsilon.$
\end{lemma}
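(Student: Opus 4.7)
The plan is to take $\vw^*$ to be the (strong) solution of the regularized problem VIP$(\tilde F, \gW)$ where $\tilde F \equiv F + \epsilon\nabla g$ and $g(\vw) \equiv \frac{1}{2\gamma}\|\vw-\vw_0\|^2$. Such a $\vw^*$ exists and is unique because $\nabla g$ is $(1/\gamma)\cdot\gamma = 1$-strongly monotone (by Assumption~\ref{ass:non-Euclidean}), so $\tilde F$ is strongly monotone on top of being Lipschitz; since $\tilde F$ is in particular monotone, its weak and strong solution sets coincide, and we may use the strong solution characterization $\langle \tilde F(\vw^*),\vw-\vw^*\rangle\ge 0$ for all $\vw\in\gW$.

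Next, I would split $\langle\tilde F(\vw),\vw-\vw^*\rangle$ into the $F$-part and the regularizer part, and manipulate as follows. By monotonicity of $F$, $\langle F(\vw),\vw-\vw^*\rangle\ge \langle F(\vw^*),\vw-\vw^*\rangle$, so
\begin{align*}
\langle\tilde F(\vw),\vw-\vw^*\rangle
&= \langle F(\vw),\vw-\vw^*\rangle + \epsilon\langle\nabla g(\vw),\vw-\vw^*\rangle\\
&\ge \langle F(\vw^*)+\epsilon\nabla g(\vw^*),\vw-\vw^*\rangle + \epsilon\langle\nabla g(\vw)-\nabla g(\vw^*),\vw-\vw^*\rangle\\
&\ge \epsilon\langle\nabla g(\vw)-\nabla g(\vw^*),\vw-\vw^*\rangle,
\end{align*}
where the last inequality uses the VIP condition for $\vw^*$ on $\tilde F$.

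Finally, I would invoke the three-point identity for the Bregman divergence of $\frac{1}{2}\|\cdot\|^2$. Writing $\nabla g(\vw) = \frac{1}{\gamma}\nabla\tfrac{1}{2}\|\vw-\vw_0\|^2$ and expanding the definition \eqref{eq:Breg}, a direct computation gives
\begin{equation*}
V_{\vw-\vw_0}(\vw^*-\vw_0) + V_{\vw^*-\vw_0}(\vw-\vw_0)
= \big\langle\nabla\tfrac{1}{2}\|\vw-\vw_0\|^2 - \nabla\tfrac{1}{2}\|\vw^*-\vw_0\|^2,\, \vw-\vw^*\big\rangle,
\end{equation*}
since the $\frac{1}{2}\|\cdot\|^2$ terms cancel. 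Multiplying by $\epsilon/\gamma$, the right-hand side becomes exactly $\epsilon\langle\nabla g(\vw)-\nabla g(\vw^*),\vw-\vw^*\rangle$, which together with the previous chain yields the claim with $\sigma=\epsilon$. There is no real obstacle here: the monotonicity of $F$ and the strong-solution property of $\vw^*$ together cancel all the ``cross terms,'' and the only nontrivial identification is the three-point identity, which holds automatically since $V$ is a Bregman divergence of a differentiable convex function.
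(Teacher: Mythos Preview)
Your proof is correct and follows essentially the same route as the paper: both hinge on the three-point identity
\[
V_{\vw-\vw_0}(\vw^*-\vw_0)+V_{\vw^*-\vw_0}(\vw-\vw_0)=\big\langle\nabla\tfrac{1}{2}\|\vw-\vw_0\|^2-\nabla\tfrac{1}{2}\|\vw^*-\vw_0\|^2,\,\vw-\vw^*\big\rangle,
\]
combined with the monotonicity of $F$. The only organizational difference is that the paper first establishes the global strong-monotonicity inequality $\langle\tilde F(\vw)-\tilde F(\vv),\vw-\vv\rangle\ge\frac{\epsilon}{\gamma}\bigl(V_{\vv-\vw_0}(\vw-\vw_0)+V_{\vw-\vw_0}(\vv-\vw_0)\bigr)$ for all $\vw,\vv$ and then appeals to ``strongly monotone $\Rightarrow$ Assumption~\ref{ass:strong-weak}'', whereas you specialize to $\vv=\vw^*$ from the start and explicitly invoke the optimality condition $\langle\tilde F(\vw^*),\vw-\vw^*\rangle\ge 0$; your version is thus slightly more self-contained in that it spells out the existence of $\vw^*$ (which the paper leaves implicit in that implication).
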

\begin{proof}
See Section \ref{sec:lem:monotone-strong-weak}. 
\end{proof}
Due to Lemma \ref{lem:monotone-strong-weak}, we can apply Theorem \ref{thm:ode} and Proposition \ref{prop:ode} to the regularized problem VIP$(F+\epsilon\nabla \frac{1}{2\gamma}\|\cdot -\vw_0\|^2,\gW)$, and then obtain Corollaries \ref{coro:best} and \ref{coro:last-iterate} for the VIP$(F,\gW)$, respectively. 
 
\begin{corollary}[Best iterate convergence in the monotone setting]\label{coro:best}
Given $\vw_0\in\gW$, let Assumptions \ref{ass:lip} and \ref{ass:non-Euclidean} hold for the regularized problem VIP$(F+\epsilon\nabla \frac{1}{2\gamma}\|\cdot-\vw_0\|^2,\gW)$. By optimizing the regularized problem by Algorithm \ref{alg:ode-refor}, then the best iterate returned by  Algorithm \ref{alg:ode-refor} satisfies 
\begin{eqnarray}
&&\sup_{\vw\in\gW,\|\tilde{\vw}_K-\vw\|\le D, \|\vw-\vw_0\|\le D
}\langle F(\tilde{\vw}_K), \tilde{\vw}_K-\vw\rangle 
\nonumber\\
&\le& D\epsilon +  D C_0 \|\vw_0 -\vw^*\|\sqrt{ \frac{L\epsilon}{\Big(1+ \frac{\alpha\gamma\epsilon}{L}\Big)^{K-1} - 1 + \frac{\alpha\gamma}{L} }},\nonumber
\end{eqnarray}
where $C_0$ is defined in Theorem \ref{thm:ode}. 
\end{corollary}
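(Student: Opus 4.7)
My plan is to reduce Corollary~\ref{coro:best} to the already-established Theorem~\ref{thm:ode} by applying it to the regularized operator
\[
G(\vw) \;:=\; F(\vw) \;+\; \epsilon\,\nabla \tfrac{1}{2\gamma}\|\vw-\vw_0\|^{2}.
\]
By Lemma~\ref{lem:monotone-strong-weak}, VIP$(G,\gW)$ satisfies Assumption~\ref{ass:strong-weak} with $\sigma=\epsilon$. Inspecting the proof of that lemma (which turns the monotone inequality $\langle F(\vw),\vw-\vw^{*}\rangle\ge 0$ into the $\sigma$-weak inequality by adding the strongly convex regularizer), one sees that any weak solution $\vw^{*}$ of the original monotone VIP$(F,\gW)$ is in fact the $\epsilon$-weak solution of VIP$(G,\gW)$ that appears in Assumption~\ref{ass:strong-weak}. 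Consequently the $\|\vw_{0}-\vw^{*}\|$ factor that will come out of Theorem~\ref{thm:ode} matches the one stated in the corollary.

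Next, running Algorithm~\ref{alg:ode-refor} (OptDE specialized to the regularized operator $G$) and invoking Theorem~\ref{thm:ode} with $\sigma=\epsilon$ yields
\[
\sup_{\vw\in\gW,\;\|\tilde\vw_{K}-\vw\|\le D}\langle G(\tilde\vw_{K}),\tilde\vw_{K}-\vw\rangle \;\le\; C_{0}D\|\vw_{0}-\vw^{*}\|\sqrt{\tfrac{L}{A_{K-1}+a_{1}}},
\]
where by \eqref{eq:A-k} with $\sigma=\epsilon$ we have $A_{K-1}=\tfrac{1}{\epsilon}\bigl[(1+\tfrac{\alpha\gamma\epsilon}{L})^{K-1}-1\bigr]$ and $a_{1}=\tfrac{\alpha\gamma}{L}$. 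This directly produces the second additive term in the stated bound.

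To recover the target inequality for $F$ rather than $G$, I would decompose
\[
\langle F(\tilde\vw_{K}),\tilde\vw_{K}-\vw\rangle \;=\; \langle G(\tilde\vw_{K}),\tilde\vw_{K}-\vw\rangle \;-\; \epsilon\bigl\langle \nabla\tfrac{1}{2\gamma}\|\tilde\vw_{K}-\vw_{0}\|^{2},\;\tilde\vw_{K}-\vw\bigr\rangle,
\]
and bound the perturbation term using the convexity of $h(\vw):=\tfrac{1}{2\gamma}\|\vw-\vw_{0}\|^{2}$:
\[
\bigl\langle \nabla h(\tilde\vw_{K}),\;\tilde\vw_{K}-\vw\bigr\rangle \;\ge\; h(\tilde\vw_{K}) - h(\vw) \;\ge\; -\tfrac{1}{2\gamma}\|\vw-\vw_{0}\|^{2}.
\]
The extra constraint $\|\vw-\vw_{0}\|\le D$ in the corollary's supremum is used precisely here, giving $-\epsilon\langle \nabla h(\tilde\vw_{K}),\tilde\vw_{K}-\vw\rangle \le O(D\epsilon)$, which is absorbed into the $D\epsilon$ term. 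Adding this to the $G$-bound yields the final estimate.

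The main obstacle is purely bookkeeping in two places: (i) justifying that the original weak solution $\vw^{*}$ is simultaneously the $\epsilon$-weak solution appearing in the Theorem~\ref{thm:ode} bound for the regularized problem (so that the $\|\vw_{0}-\vw^{*}\|$ factor is not replaced by a regularized analogue), and (ii) the Lipschitz-continuity subtlety---because $\nabla\tfrac{1}{2}\|\cdot\|^{2}$ need not be globally Lipschitz in the non-Euclidean setting, Algorithm~\ref{alg:ode-refor} must fold the regularizer into the Bregman-like prox-mapping rather than lumping it into the operator, so that the constant $L$ appearing in the final bound remains the Lipschitz constant of $F$ alone. Once both points are handled, the corollary follows by a direct substitution into Theorem~\ref{thm:ode}.
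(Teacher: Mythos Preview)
Your approach is essentially identical to the paper's: apply Theorem~\ref{thm:ode} to the regularized operator $G$ with $\sigma=\epsilon$ (via Lemma~\ref{lem:monotone-strong-weak}), then peel off the regularizer using convexity of $h(\vw)=\tfrac{1}{2\gamma}\|\vw-\vw_0\|^2$, i.e.\ $\langle\nabla h(\tilde\vw_K),\vw-\tilde\vw_K\rangle\le h(\vw)$, together with the extra constraint $\|\vw-\vw_0\|\le D$.

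One correction on your obstacle~(i): Lemma~\ref{lem:monotone-strong-weak} actually shows $G$ is strongly monotone in the Bregman sense, so the point $\vw^*$ satisfying Assumption~\ref{ass:strong-weak} for VIP$(G,\gW)$ is the (unique) solution of the \emph{regularized} problem, not a weak solution of the original VIP$(F,\gW)$; your claim that the two coincide is not justified and is in general false (the cross term $\langle\nabla h(\vw^*),\vw-\vw^*\rangle$ need not vanish). The paper simply leaves the meaning of $\vw^*$ implicit and carries the symbol through from Theorem~\ref{thm:ode}. As for obstacle~(ii), it is dissolved by the corollary's own hypothesis: Assumption~\ref{ass:lip} is assumed to hold for the \emph{regularized} operator, so $L$ in the bound is the Lipschitz constant of $G$, and the paper does treat the regularizer as part of the operator rather than folding it into the prox-mapping.
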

\begin{proof}
See Section \ref{sec:coro:best}.
\end{proof}

Compared with Theorem \ref{thm:ode} and Proposition \ref{prop:ode}, we need an extra condition $\|\vw-\vw_0\|\le D$ in Corollary \ref{coro:best}, which can be satisfied by choosing a large enough $D$. By Corollary \ref{coro:best}, by choosing $K=O\big(\frac{1}{\epsilon}\log\frac{1}{\epsilon}\big)$, we will obtain an $O(D\epsilon)$-accurate solution. Note that $D$ does not appear in our algorithm and is not relevant to the choice of $\epsilon.$

\begin{corollary}[Last iterate convergence in the monotone setting]\label{coro:last-iterate}
Given $\vw_0\in\gW$, let Assumptions \ref{ass:lip} and \ref{ass:non-Euclidean} hold for the regularized problem VIP$(F+\epsilon\nabla \frac{1}{2\gamma}\|\cdot-\vw_0\|^2,\gW)$. By optimizing the regularized problem by Algorithm \ref{alg:ode-refor}, the last iterate of Algorithm \ref{alg:ode-refor} satisfies
\begin{eqnarray}
\sup_{\vw\in\gW,\|{\vw}_K-\vw\|\le D, \|\vw-\vw_0\|\le D
}\langle F({\vw}_K), {\vw}_K-\vw\rangle \le D\epsilon +  DC_0 L \|\vw_0 -\vw^*\|\sqrt{ \frac{1}{{\alpha\gamma}\Big(1+ \frac{\alpha\gamma\epsilon}{L}\Big)^{K-1} }},\nonumber
\end{eqnarray}
where $C_0$ is defined in Theorem \ref{thm:ode}. 
\end{corollary}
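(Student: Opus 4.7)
The plan is to mimic the derivation of Corollary~\ref{coro:best}, only replacing the best--iterate estimate from Theorem~\ref{thm:ode} with the last--iterate estimate from Proposition~\ref{prop:ode}. Concretely, I would run Algorithm~\ref{alg:ode-refor}, i.e.\ OptDE applied to the regularized operator
\[
\tilde F(\vw)\;\defeq\;F(\vw) \;+\; \epsilon\,\nabla\tfrac{1}{2\gamma}\|\vw-\vw_0\|^2,
\]
and invoke Lemma~\ref{lem:monotone-strong-weak} to conclude that VIP$(\tilde F,\gW)$ satisfies Assumption~\ref{ass:strong-weak} with parameter $\sigma=\epsilon$. Since $F$ is $L$--Lipschitz by Assumption~\ref{ass:lip} and the regularizer's gradient is $\delta/\gamma$--Lipschitz (Assumption~\ref{ass:non-Euclidean}), $\tilde F$ is Lipschitz with a constant of the same order as $L$, so Proposition~\ref{prop:ode} applies directly with $\sigma=\epsilon$ and yields
\[
\sup_{\vw\in\gW,\ \|\vw_K-\vw\|\le D}\langle \tilde F(\vw_K),\vw_K-\vw\rangle
\;\le\; C_0 D\,\|\vw_0-\vw^*\|\sqrt{\frac{L}{a_{K-1}}},
\]
where $a_{K-1}=\tfrac{\alpha\gamma}{L}\bigl(1+\tfrac{\alpha\gamma\epsilon}{L}\bigr)^{K-2}$ by \eqref{eq:a-K-1}; absorbing a constant gives the exponent $K-1$ appearing in the statement.

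Next I would unwind the regularization. Writing
\[
\langle F(\vw_K),\vw_K-\vw\rangle
\;=\;\langle\tilde F(\vw_K),\vw_K-\vw\rangle
\;-\;\tfrac{\epsilon}{\gamma}\bigl\langle\nabla\tfrac{1}{2}\|\vw_K-\vw_0\|^2,\ \vw_K-\vw\bigr\rangle,
\]
I would use the dual--norm bound \eqref{eq:grad-norm} from Assumption~\ref{ass:non-Euclidean} together with the two restrictions $\|\vw_K-\vw\|\le D$ and $\|\vw-\vw_0\|\le D$ (the latter is precisely the extra constraint motivating the regularization trick) to estimate
\[
\tfrac{\epsilon}{\gamma}\bigl|\langle\nabla\tfrac{1}{2}\|\vw_K-\vw_0\|^2,\vw_K-\vw\rangle\bigr|
\;\le\;\tfrac{\epsilon\delta}{\gamma}\,\|\vw_K-\vw_0\|\,\|\vw_K-\vw\|
\;\le\;\text{(const)}\cdot D\epsilon,
\]
after applying $\|\vw_K-\vw_0\|\le\|\vw_K-\vw\|+\|\vw-\vw_0\|\le 2D$. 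Taking suprema and combining the two contributions yields the stated bound $D\epsilon+DC_0 L\|\vw_0-\vw^*\|\sqrt{1/[\alpha\gamma(1+\alpha\gamma\epsilon/L)^{K-1}]}$, modulo absorbing universal constants.

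I expect the step of transferring the bound from $\tilde F$ to $F$ to be the main obstacle, both in (i) verifying that a weak solution $\vw^*$ of the original monotone VIP may be used in place of a $\sigma$--weak solution of the regularized VIP inside Proposition~\ref{prop:ode} (this follows from the proof of Lemma~\ref{lem:monotone-strong-weak}, which shows $\vw^*$ itself serves as a $\sigma$--weak solution of VIP$(\tilde F,\gW)$), and (ii) ensuring that the additive error coming from the regularizer scales only like $D\epsilon$ rather than $D^2\epsilon$. The latter is exactly why the auxiliary restriction $\|\vw-\vw_0\|\le D$ is imposed in the supremum, and why the final bound contains the extra additive term $D\epsilon$ absent from Proposition~\ref{prop:ode}. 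Everything else is a direct plug--in of Proposition~\ref{prop:ode} with $\sigma=\epsilon$.
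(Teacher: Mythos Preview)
Your overall plan matches the paper's proof: invoke Lemma~\ref{lem:monotone-strong-weak} to get $\sigma=\epsilon$, apply Proposition~\ref{prop:ode} to the regularized problem, then strip off the regularizer. The only substantive difference is in how you remove the regularization term. You bound
\[
\tfrac{\epsilon}{\gamma}\bigl|\langle\nabla\tfrac{1}{2}\|\vw_K-\vw_0\|^2,\vw_K-\vw\rangle\bigr|
\]
via the dual--norm estimate \eqref{eq:grad-norm} and Cauchy--Schwarz, which forces you to control $\|\vw_K-\vw_0\|$ through the triangle inequality. The paper instead uses \emph{convexity} of $\tfrac{1}{2\gamma}\|\cdot-\vw_0\|^2$ to write
\[
\epsilon\,\bigl\langle\nabla_{\vw_K}\tfrac{1}{2\gamma}\|\vw_K-\vw_0\|^2,\ \vw-\vw_K\bigr\rangle
\;\le\;\tfrac{\epsilon}{2\gamma}\|\vw-\vw_0\|^2,
\]
which depends only on $\|\vw-\vw_0\|$ and so is bounded directly by the extra constraint $\|\vw-\vw_0\|\le D$, with no need to locate $\vw_K$. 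Both routes give an additive error of order $D^2\epsilon/\gamma$ (the paper records it as $D\epsilon$, absorbing constants), so neither is wrong, but the convexity argument is cleaner.

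One small correction to your concern~(i): the $\vw^*$ appearing in the bound is the $\sigma$--weak solution of the \emph{regularized} VIP, which is what Proposition~\ref{prop:ode} delivers; you do not need to argue that a weak solution of the original monotone VIP plays this role. Lemma~\ref{lem:monotone-strong-weak} establishes that the regularized operator is $\epsilon$--strongly monotone, hence has its own $\sigma$--weak solution, and that is the $\vw^*$ in the statement.
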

\begin{proof}
See Section \ref{sec:coro:last-iterate}. 
\end{proof}
Similar to Corollary \ref{coro:best} for best iterate, in Corollary \ref{coro:last-iterate}, by choosing $K=O\big(\frac{1}{\epsilon}\log \frac{1}{\epsilon}\big)$, the last iterate will be an $O(D\epsilon)$-accurate strong solution, which is significantly better than the tight bound $O(1/\epsilon^2)$ for last iterate \cite{golowich2020last}. Nevertheless, it should be noted that Corollary \ref{coro:last-iterate} is in a non-classical sense: we do not guarantee last iterate convergence for all $K\ge 1$, but only after $K=O\big(\frac{1}{\epsilon}\log \frac{1}{\epsilon}\big)$ with a prescribed accuracy parameter $\epsilon.$ Thus our result does not contradict with the lower bound of last iterate \cite{golowich2020last}.

Meanwhile, our proof only relies on the regularized problem VIP$(F+\epsilon\nabla \frac{1}{2\gamma}\|\cdot-\vw_0\|^2,\gW)$ satisfying Assumption \ref{ass:strong-weak} with $\sigma = \epsilon,$ which holds if the VIP$(F,\gW)$ is monotone. However, it is not necessary for the VIP$(F,\gW)$ to be monotone. For instance, if the VIP$(F,\gW)$ satisfies Assumption \ref{ass:weak} and $\vw_0=\vw^*,$ then the VIP$(F+\epsilon\nabla \frac{1}{2\gamma}\|\cdot-\vw_0\|^2,\gW)$ also satisfies Assumption \ref{ass:strong-weak} with $\sigma = \epsilon.$ Of course, letting $\vw_0=\vw^*$ is impractical and we leave the more general setting of $\vw_0$ under non-monotone settings for further research. 

\begin{remark}
Recently, \cite{diakonikolas2020halpern} has proposed a different Halpern iteration method under the monotone and Lipschitz assumptions. The Halpern iteration method does not need to know the Lipschitz constant and thus is parameter-free, and also attains the $O\big(\frac{1}{\epsilon}\log \frac{1}{\epsilon}\big)$ convergence rate. Nevertheless, there are two major differences: The Halpern iteration method has two-loop, while our OptDE method is a single-loop single-call method; now the Halpern iteration method is limited to the Euclidean setting, while ours can have theoretical guarantees in the non-Euclidean setting.  
\end{remark}

\section{Stochastic Optimistic Dual Extrapolation}
\vspace{-0.1cm}

\begin{algorithm}[t]
\caption{Stochastic Optimistic Dual Extrapolation}\label{alg:sode}
\begin{algorithmic}[1]
\STATE \textbf{Input: } Lipschitz constant $L >0$ from Assumption \ref{ass:lip}, $\gamma, \delta>0$ from Assumption \ref{ass:non-Euclidean}. The VIP$(F,\gW)$ satisfying Assumption \ref{ass:weak} ($\sigma = 0$) or  Assumption \ref{ass:strong-weak} ($\sigma > 0$). 
\STATE $A_0 = 0, \alpha = \min\{\frac{\gamma}{32}, \frac{1}{16}\}.$
\STATE $\vw_0=\vz_0\in\gW,  \vg_0 = \vzero. $ 
\vspace{0.015in}
\FOR{ $k = 1,2,3,\ldots, K$}
\STATE $a_k = \frac{\alpha\gamma\sqrt{1+\sigma A_{k-1}}}{L}, A_k = A_{k-1} + a_{k}$.
\STATE $\vw_{k} = P_{\vz_{k-1}}\big(\frac{{\alpha^2\gamma}}{L^2 a_k}  F(\vw_{k-1};\xi_{k-1})\big).$
\STATE $\vg_k = \vg_{k-1} + a_k\big(F(\vw_k;\xi_{k}) - \frac{\sigma}{\gamma} \nabla_{\vw_k}\frac{1}{2}\|\vw_k-\vw_0\|^2\big).$
\STATE $\vz_{k}= P_{\vw_0}\big(\frac{1}{1+\sigma A_k} \vg_k\big).$
\ENDFOR
\STATE $\tilde{\vw}_K = \vw_k$, where $k$ is chosen at random  with probability distribution $\{\frac{a_1}{A_K}, \frac{a_2}{A_K}, \ldots, \frac{a_K}{A_K}\}.$   
\STATE \textbf{return} $\tilde{\vw}_K.$
\end{algorithmic}	
\end{algorithm}

In this section, we present a stochastic version of the above OptDE method, \emph{a.k.a., stochastic optimistic dual extrapolation (SOptDE)}, which is given in Algorithm \ref{alg:sode}. Compared with the OptDE method in Algorithm \ref{alg:ode}, the main difference is that Algorithm \ref{alg:sode} approximates $\{F(\vw_k)\}$ by the unbiased stochastic estimations $\{F(\vw_k; \xi_k)\}$, where the randomness is from the \emph{i.i.d} random variables $\{\xi_k\}$. For simplicity, in this section, we use $\mathbb{E}_{\xi}[\cdot]$ to denote the expectation \emph{w.r.t.} $\xi$ while fixing the previous randomness; meanwhile, we use $\E[\cdot]$ to denote the expectation \emph{w.r.t.} the randomness of all the history. Formally, we make Assumption \ref{ass:unbias-variance}.
\begin{assumption}\label{ass:unbias-variance}
$\forall \vw\in\gW,$ $F(\vw;\xi)$ is an unbiased estimation of $F(\vw)$ such that $\E_{\xi}[F(\vw;\xi)] = F(\vw)$; meanwhile the variance of $F(\vw;\xi)$ is bounded by $s^2$ such that $\E_{\xi}[\|F(\vw;\xi) - F(\vw)\|_*^2] \le s^2.$
\end{assumption}

Meanwhile, to cancel the error from randomness, in Algorithm \ref{alg:sode}, when $\sigma>0$, we consider a more
conservative parameter setting $a_k = \frac{\alpha\gamma\sqrt{1+\sigma A_{k-1}}}{L}$ rather than $a_k = \frac{\alpha\gamma(1+\sigma A_{k-1})}{L}$ of Algorithm \ref{alg:ode}. Furthermore, because of the randomness, choosing the exact best iterate as in the deterministic case is no longer meaningful as its expectation is impossible to compute. In this case, we choose $\tilde{\vw}_K$ at random according to the distribution $\{\frac{a_1}{A_K}, \frac{a_2}{A_K}, \ldots, \frac{a_K}{A_K}\}$, which also facilitates theoretical analysis\footnote{In practice, nevertheless, one may often consider choosing the last iterate for simplicity.}.

\begin{theorem}\label{thm:sode}
For the  setting $\sigma =0$ ($i.e.,$ Assumption \ref{ass:weak} holds), after  $K$ iterations, Algorithm \ref{alg:sode-refor} returns a $\tilde{\vw}_K$ such that
\begin{align}
&\E\Big[\sup_{\vw\in\gW, \|\tilde{\vw}_K-\vw\|\le D}  \langle F(\tilde{\vw}_K), \tilde{\vw}_K - \vw\rangle\Big]\nonumber\\
&\le \sqrt{2}(1 +\delta)L D \sqrt{ \frac{\|\vw^*-\vw_0\|^2}{8\alpha\gamma K}+ \frac{s^2}{L^2}}+L^2\Big(\frac{\|\vw^*-\vw_0\|^2}{8\alpha\gamma K} + \frac{s^2}{L^2}\Big) + \frac{s^2}{2L^2}.\label{eq:sode-sigma0-1}
\end{align}
For $\sigma >0,$ ($i.e.,$ Assumption \ref{ass:strong-weak} holds), we have
\begin{eqnarray}
\E[\|\tilde{\vw}_{K}-\vw^*\|^2] \le
 \frac{32 L^2}{\sigma^2 (\alpha\gamma)^2 (K+1)^2}\Big(\frac{8\alpha s^2 K}{L^2} + \frac{1}{2\gamma}\|\vw^*-\vw_0\|^2\Big).  \label{eq:sode-sigma1-1}
\end{eqnarray}
\end{theorem}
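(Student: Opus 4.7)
The plan is to mirror the deterministic proof of Theorem \ref{thm:ode} but to carefully track the stochastic error coming from replacing $F(\vw_k)$ by $F(\vw_k;\xi_k)$. Concretely, I would define the noise variables $\bEta_k := F(\vw_k;\xi_k) - F(\vw_k)$ and $\bEta_{k-1}' := F(\vw_{k-1};\xi_{k-1}) - F(\vw_{k-1})$, and filter on $\field_k := \sigma(\xi_0,\dots,\xi_{k-1})$ so that $\vw_k$ is $\field_k$-measurable while $\bEta_k$ is independent of $\field_k$ with $\E_\xi[\bEta_k]=\vzero$ and $\E_\xi[\|\bEta_k\|_*^2]\le s^2$ by Assumption \ref{ass:unbias-variance}. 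The conservative step $a_k = \alpha\gamma\sqrt{1+\sigma A_{k-1}}/L$ (rather than the linear-growth one in Algorithm \ref{alg:ode}) and the randomized output $\tilde{\vw}_K$ with probabilities $a_k/A_K$ are designed precisely so that quadratic noise terms can be absorbed by the progress part; both choices are standard stochastic-extragradient tricks.

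First, I would replay the one-step analysis from the proof of Theorem \ref{thm:ode}, which is built on an estimation-sequence-type inequality for the dual averaging steps (lines 7--8) combined with the prox-mapping step (line 6). This gives, for every $\vw\in\gW$, a per-step inequality of the form
\begin{align*}
a_k\langle F(\vw_k),\vw_k-\vw\rangle
\;\le\;\bigl[\Phi_{k-1}(\vw)-\Phi_k(\vw)\bigr] + R_k(\vw) + N_k(\vw),
\end{align*}
where $\Phi_k$ is the regularized running estimator, $R_k(\vw)$ collects deterministic residual terms controlled by $\|\vw_k-\vz_{k-1}\|^2$ and $\|\vw_{k-1}-\vz_{k-1}\|^2$ (handled as in the proof of Theorem \ref{thm:ode} using Assumption \ref{ass:lip} and the conservative stepsize), and $N_k(\vw) := a_k\langle \bEta_k,\vw-\vw_k\rangle + a_k\langle \bEta_{k-1}' - \bEta_k,\vw_k-\vz_{k-1}\rangle + \text{(quadratic noise)}$ collects the stochastic error. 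For the non-monotone parts I would reuse Assumption \ref{ass:weak} when $\sigma=0$ and Assumption \ref{ass:strong-weak} when $\sigma>0$ exactly as in Section \ref{sec:thm:ode} and \ref{sec:prop:ode}.

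Second, I would sum the per-step bounds from $k=1$ to $K$, telescope $\Phi_{k-1}-\Phi_k$ down to $V_{\vw_0}(\vw)\le \tfrac{1}{2\gamma}\|\vw-\vw_0\|^2$-type initial terms, and handle the noise: linear noise terms involving $\langle\bEta_k,\vw-\vw_k\rangle$ need the ghost-iterate trick (replacing $\vw_k$ inside the inner product by an $\field_k$-measurable surrogate whose conditional expectation against $\bEta_k$ vanishes) so that $\E_\xi$ kills them; quadratic noise terms such as $\|\bEta_k\|_*^2$ and $\|\bEta_{k-1}'\|_*^2$ are bounded in expectation by $s^2$ and absorbed into the deterministic quadratic residuals via Young's inequality with the specific constant $\alpha=\min\{\gamma/32,1/16\}$. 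Dividing by $A_K$ and using the sampling distribution $\{a_k/A_K\}$ converts $\sum a_k\langle F(\vw_k),\vw_k-\vw\rangle / A_K$ into $\E_k[\langle F(\vw_k),\vw_k-\vw\rangle]$, which by the tower property equals $\E[\langle F(\tilde{\vw}_K),\tilde{\vw}_K-\vw\rangle]$.

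Third, for the two cases I would specialize. For $\sigma=0$, $A_K = \alpha\gamma K/L$, so after taking the $\sup$ over $\vw$ with $\|\vw-\tilde\vw_K\|\le D$ and applying Cauchy--Schwarz plus the bound $\|\nabla\tfrac12\|\cdot\|^2\|_*\le\delta\|\cdot\|$ from Assumption \ref{ass:non-Euclidean}, I get a $\sqrt{\,\cdot\,+s^2/L^2\,}$ term multiplied by $(1+\delta)LD$, plus lower-order $L^2(\cdot)+s^2/(2L^2)$ pieces arising from the absorbed quadratic noise, exactly matching \eqref{eq:sode-sigma0-1}. For $\sigma>0$, I instead use Assumption \ref{ass:strong-weak}, which upgrades the LHS to a multiple of $\E[\|\tilde\vw_K-\vw^*\|^2]$ (after choosing $\vw=\vw^*$ and using $V\ge \tfrac\gamma2\|\cdot\|^2$), and the conservative stepsize $a_k\propto\sqrt{1+\sigma A_{k-1}}$ forces $A_K\gtrsim \alpha\gamma\sqrt{\sigma}(K+1)^2/(4L)$ by solving the resulting recursion; dividing the noise and initial terms by $\sigma A_K$ yields the $1/(K+1)^2$ rate in \eqref{eq:sode-sigma1-1}. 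The main obstacle I anticipate is the careful accounting of the ``optimistic'' noise $\bEta_{k-1}'$ appearing one step out of sync: controlling $a_k\langle \bEta_{k-1}',\vw_k-\vz_{k-1}\rangle$ requires decoupling $\vw_k$ from $\xi_{k-1}$, and this is precisely where the constraint $\alpha\le\min\{\gamma/32,1/16\}$ is used to absorb it into $\|\vw_k-\vz_{k-1}\|^2$ rather than paying an extra factor in the variance.
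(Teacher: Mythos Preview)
Your overall plan parallels the paper's proof: the paper also builds a stochastic estimation-sequence inequality (its Lemma~\ref{lem:sto:es}), bounds the per-step error (its Lemma~\ref{lem:no-E}), and for $\sigma=0$ passes to the merit function via the optimality condition of $\vw_k$ (its Lemma~\ref{lem:no-strong-dist}). Two parts of your plan, however, are off.

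\textbf{Order of $\sup$ and $\E$, and the role of the ghost trick.} As written, you derive $\E[\langle F(\tilde\vw_K),\tilde\vw_K-\vw\rangle]$ for each fixed $\vw$ and then take the sup; that yields $\sup_\vw \E[\cdot]$, not the required $\E[\sup_\vw\cdot]$. The ghost-iterate trick you invoke does not repair this in the spot where you place it, because you apply it to $\langle\bEta_k,\vw-\vw_k\rangle$ with $\vw_k$ already $\field_k$-measurable and $\bEta_k$ independent of $\field_k$---there is nothing to ``ghost'' there. The paper's route avoids the issue more simply: set $\vu=\vw^*$ (deterministic) in the estimation-sequence bound so that the linear noise terms $a_k\langle\bEta_k,\vw_k-\vw^*\rangle$ vanish in expectation with no trick at all, and obtain an averaged bound on the residuals $\|\vw_k-\vz_{k-1}\|^2+\|\vw_{k-1}-\vz_{k-1}\|^2$. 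Only afterwards, and separately, do they bound $\sup_{\|\vw-\vw_k\|\le D}\langle F(\vw_k),\vw_k-\vw\rangle$ \emph{pointwise} by $(1+\tfrac{\delta}{\alpha\gamma})LD(\|\vw_k-\vz_{k-1}\|+\|\vw_{k-1}-\vz_{k-1}\|)$ plus quadratic noise and $\tfrac{L^2}{2}\|\vw_k-\vw_{k-1}\|^2$, using the optimality of $\vw_k$ exactly as in Lemma~\ref{lem:ode-strong-dist}. Because this bound holds for every $\vw$ \emph{before} taking expectation, $\E[\sup\cdot]$ is controlled directly by the already-obtained residual bound.

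\textbf{Recursion for $A_K$ when $\sigma>0$.} Your solution $A_K\gtrsim \tfrac{\alpha\gamma}{4L}\sqrt{\sigma}(K+1)^2$ is miscalculated. From $a_k=\tfrac{\alpha\gamma}{L}\sqrt{1+\sigma A_{k-1}}$ one gets $A_K$ of order $\bigl(\tfrac{\alpha\gamma}{L}\bigr)^2\sigma(K+1)^2$; the paper uses $A_K\ge \bigl(\tfrac{\alpha\gamma}{4L}\bigr)^2\sigma(K+1)^2$. With the correct $A_K$, dividing $\tfrac{8\alpha s^2K}{L^2}+\tfrac{1}{2\gamma}\|\vw^*-\vw_0\|^2$ by $\tfrac{\sigma}{2}A_K$ produces exactly the $\tfrac{32L^2}{\sigma^2(\alpha\gamma)^2(K+1)^2}$ prefactor in \eqref{eq:sode-sigma1-1}; your formula would give the wrong powers of $\sigma$, $\alpha\gamma$, and $L$.
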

\begin{proof}
See Section \ref{sec:thm:sode}.
\end{proof}
As show in \eqref{eq:sode-sigma0-1}, for the setting $\sigma=0$ (\emph{a.k.a.}, Assumption \ref{ass:weak})  even if the number of iterations $K\rightarrow \infty$, the expected restricted strong merit function can only be upper bounded by $O\big(\frac{s}{L}\big).$ Thus to guarantee the convergence of SOptDE, the variance should be $o(1)$, such as $s^2 = O\big(\frac{1}{K}\big).$ In the Euclidean setting that $\|\cdot\| :=\|\cdot\|_2,$ by the concentration inequality \cite{tropp2015introduction}, to attain a variance of $O\big(\frac{1}{K}\big)$, we need $O(K)$ samples. Thus combining the setting $s^2 = O(\frac{1}{K})$ and the result in \eqref{eq:sode-sigma0-1}, it can be verified that the single-call SOptDE method needs $O(1/\epsilon^4)$ number of samples to obtain an $\epsilon$-accurate solution in terms of the expected restricted strong merit function. 

To develop the two-call stochastic extragradient method SEG \cite{iusem2017extragradient} under the pseudomonotone assumption\footnote{We can verify that the result in \cite{iusem2017extragradient} can be extended under our Assumption \ref{ass:weak}.},  \cite{iusem2017extragradient} has also considered variance reduction with a large batch size and used a ``quadratic natural residual'' (in our notation, it is $\E[\|\vw_k-\vz_{k-1}\|^2]$) to measure the accuracy, which in turns can be used to derive the same complexity result $O(1/\epsilon^4)$ as SOptDE in terms of expected restricted strong merit function (see the supplementary material). OSG \cite{liu2019towards} is a single-call version of SEG, which also uses quadratic natural residual as a convergence measure. However in the general constrained setting, it is not know how to convert the guarantee of quadratic natural residual of the single-call OSG into the guarantee of expected restricted strong merit function. In fact, in our single-call setting, the ``(quadratic) natural residual'' $\E[\|\vw_k-\vz_{k-1}\|^2]$ is no longer useful in deriving the theoretical guarantee by expected restricted strong merit function. As a result, we consider the term $\E[\|\vw_k-\vz_{k-1}\|^2 + \|\vw_{k-1}-\vz_{k-1}\|^2 ]$, which makes our proof quite different from that in  \cite{iusem2017extragradient}.

Under the stronger Assumption \ref{ass:strong-weak}, our result is given in terms of the expected solution distance. As shown in \eqref{eq:sode-sigma1-1}, under Assumption \ref{ass:strong-weak}, SOptDE can converge provably even when the variance $s^2$ is constant. In fact, the $O\big(\frac{1}{K}\big)$ is optimal and has been obtained by the two-call extragradient method ESA \cite{kannan2019optimal} under the pseudomonotone assumption. Meanwhile,  \cite{zhou2020robust} used the plain stochastic gradient descent algorithm and obtained the $O\big(\frac{1}{K}\big)$ result for strongly monotone variational inequalities, which can also be extended  to the setting that $\sigma$-weak solution exists.

With the aggressive parameter setting $a_k = \frac{\alpha\gamma(1+\sigma A_{k-1})}{L}$ and a large batch size strategy, we also obtain the first convergence guarantee $O(1/\epsilon^2\log\frac{1}{\epsilon})$ in terms of restricted strong merit function as shown in Table \ref{tb:sample} (see details in the supplementary material). 
\vspace{-2mm}
\section{Concluding Remarks}
\vspace{-2mm}
In this paper, we proposed a single-call extragradient method \emph{optimistic dual extrapolation (OptDE)} beyond the monotone setting and also extended it to the stochastic setting as \emph{stochastic optimistic dual extrapolation (SOptDE)}. We systematically proved the  convergence results of OptDE and SOptDE under the Assumption \ref{ass:weak} that a weak solution exists and Assumption \ref{ass:strong-weak} that a strongly weak solution exists. We also show beneficial implications of our analysis in both non-monotone and monotone settings. In the future, we will further study how the proposed new methods may lead to improved computational efficiency and performance guarantees in a wide range of machine learning problems such as the training of adversarial deep neural networks.

\newpage

\section*{Broader Impact}
In this paper, we discuss a systematic theoretical analysis for single-call extragradient methods, which has been widely used for modern machine learning applications. The theoretical results in this paper can bring in meaningful insight and understanding for practical algorithms.

\section*{Acknowledgement}
Chaobing and Yi acknowledge support from Tsinghua-Berkeley Shenzhen Institute (TBSI) Research Fund. Yichao and Yi acknowledge funding from Sony Research. Yi acknowledges support from ONR grant N00014-20-1-2002 and the joint Simons Foundation-NSF DMS grant \#2031899, as well as support from Berkeley AI Research (BAIR), Berkeley FHL Vive Center for Enhanced Reality, and Berkeley Center for Augmented Cognition.

\bibliography{OptDE_camera_ready.bbl}
\bibliographystyle{plain}

\clearpage

\appendix

\onecolumn

\section{Convergence Analysis of Optimistic Dual Extrapolation}\label{sec:ode}

Based on the optimality condition of $\vz_k$ and Assumption \ref{ass:non-Euclidean}, we have Lemma \ref{lem:ode-es}. 
\begin{lemma}\label{lem:ode-es}
In Algorithm \ref{alg:ode}, let 
\begin{align}
E_{1k} :=&  a_k\Big\langle  F(\vw_{k}) + \frac{L}{{\alpha\gamma}} \nabla_{\vw_k} \frac{1}{2}\|\vw_k- \vz_{k-1}\|^2, \vw_{k}-\vz_k\Big\rangle\nonumber\\
&-\frac{L}{{\alpha\gamma}} \Big(\frac{1}{2}\|\vw_k-\vz_{k-1}\|^2 + \frac{\gamma}{2}\|\vw_k-\vz_k\|^2 \Big), \label{eq:e}
\end{align}
then $\forall \vu,\vw_0\in\gW$, we have 
\begin{align}
&\sum_{k=1}^{K}a_k\left(\langle F(\vw_k), \vw_k-\vu\rangle - \frac{\sigma}{\gamma} V_{\vw_k-\vw_0}(\vu-\vw_0)\right)
\le \sum_{k=1}^{K}E_{1k} +\frac{1}{2\gamma}\|\vu-\vw_0\|^2.\label{eq:es1}
\end{align}
\end{lemma}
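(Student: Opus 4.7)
The strategy is to view Step 8 of Algorithm \ref{alg:ode} as the minimizer of an auxiliary $(1+\sigma A_k)$-strongly convex ``estimation'' function $\psi_k(\vu):=\langle\vg_k,\vu\rangle+\frac{1+\sigma A_k}{2\gamma}\|\vu-\vw_0\|^2$, telescope a descent inequality on the values $\psi_k(\vz_k)$, and finally match the residual against $\sum_kE_{1k}$ via a three-point inequality from the $\gamma$-strong convexity of $\tfrac12\|\cdot\|^2$. Throughout, only the optimality of $\vz_k$ and Assumption \ref{ass:non-Euclidean} are needed.

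First, I would unroll the $\vg_k$ update and expand the Bregman divergence from \eqref{eq:Breg} to obtain the per-step identity $\psi_k(\vu)-\psi_{k-1}(\vu)=a_k\langle F(\vw_k),\vu\rangle+\tfrac{a_k\sigma}{\gamma}V_{\vw_k-\vw_0}(\vu-\vw_0)+C_k$, with $C_k$ independent of $\vu$. Summing from $k=1$ to $K$ with $\psi_0(\vu)=\frac{1}{2\gamma}\|\vu-\vw_0\|^2$, the LHS of the lemma rewrites exactly as $T_K-\psi_K(\vu)+\frac{1}{2\gamma}\|\vu-\vw_0\|^2$ for some $T_K$ that depends only on the iterates $\{\vw_k\}$. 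Since $\vz_K=\arg\min_{\vu\in\gW}\psi_K(\vu)$, one has $\psi_K(\vu)\ge\psi_K(\vz_K)$, so it suffices to upper bound $T_K-\psi_K(\vz_K)$ by $\sum_kE_{1k}$.

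Next, I would telescope the sequence $\{\psi_k(\vz_k)\}_k$. Applying the increment identity at $\vu=\vz_k$ and using the $(1+\sigma A_{k-1})$-strong convexity of $\psi_{k-1}$ with the optimality $\vz_{k-1}=\arg\min\psi_{k-1}$ (so $\psi_{k-1}(\vz_k)\ge\psi_{k-1}(\vz_{k-1})+\tfrac{1+\sigma A_{k-1}}{2}\|\vz_k-\vz_{k-1}\|^2$), I obtain
$$\psi_k(\vz_k)\ge\psi_{k-1}(\vz_{k-1})+\tfrac{1+\sigma A_{k-1}}{2}\|\vz_k-\vz_{k-1}\|^2+a_k\langle F(\vw_k),\vz_k\rangle-\tfrac{a_k\sigma}{\gamma}\bigl\langle\nabla_{\vw_k}\tfrac12\|\vw_k-\vw_0\|^2,\vz_k\bigr\rangle+\tfrac{a_k\sigma}{2\gamma}\|\vz_k-\vw_0\|^2.$$
Telescoping with $\vz_0=\vw_0$ (so $\psi_0(\vz_0)=0$) and combining with the identity from the previous paragraph, the $\sigma$-dependent cross terms collapse into $-\sum_k\tfrac{a_k\sigma}{\gamma}V_{\vw_k-\vw_0}(\vz_k-\vw_0)\le 0$, which can be dropped, yielding the intermediate bound
$$\sum_{k=1}^K a_k\bigl(\langle F(\vw_k),\vw_k-\vu\rangle-\tfrac{\sigma}{\gamma}V_{\vw_k-\vw_0}(\vu-\vw_0)\bigr)\le\sum_{k=1}^K a_k\langle F(\vw_k),\vw_k-\vz_k\rangle-\sum_{k=1}^K\tfrac{1+\sigma A_{k-1}}{2}\|\vz_k-\vz_{k-1}\|^2+\tfrac{1}{2\gamma}\|\vu-\vw_0\|^2.$$

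The main obstacle is the final matching step. Using the parameter choice $a_k=\frac{\alpha\gamma(1+\sigma A_{k-1})}{L}$, I would exploit that $\tfrac{a_kL}{\alpha\gamma}=1+\sigma A_{k-1}$, so the cross term in $E_{1k}$ carries exactly this coefficient in front of the gradient inner product. Then the three-point inequality
$$\tfrac12\|\vz_k-\vz_{k-1}\|^2\ge\tfrac12\|\vw_k-\vz_{k-1}\|^2-\bigl\langle\nabla_{\vw_k}\tfrac12\|\vw_k-\vz_{k-1}\|^2,\vw_k-\vz_k\bigr\rangle+\tfrac{\gamma}{2}\|\vw_k-\vz_k\|^2,$$
which follows from $\gamma$-strong convexity of $\tfrac12\|\cdot\|^2$ applied to the triple $(\vz_{k-1},\vw_k,\vz_k)$, scaled by $(1+\sigma A_{k-1})$ and carefully combined with the quadratic terms $\tfrac{L}{2\alpha\gamma}\|\vw_k-\vz_{k-1}\|^2$ and $\tfrac{L}{2\alpha}\|\vw_k-\vz_k\|^2$ inside $E_{1k}$, rearranges into the desired bound. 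Handling this last quadratic bookkeeping, in particular tracking the interplay between $\tfrac{L}{\alpha\gamma}$ and $1+\sigma A_{k-1}$ so that the slack remains non-positive, is the most delicate algebraic step of the proof.
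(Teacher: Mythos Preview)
Your approach is correct and essentially identical to the paper's: both define the same estimation sequence $\psi_k$ (up to an additive constant), use its $(1+\sigma A_{k-1})$-strong convexity together with the optimality of $\vz_{k-1}$ to telescope $\psi_k(\vz_k)$, bound $\psi_K(\vz_K)\le\psi_K(\vu)$, and then apply the three-point inequality from $\gamma$-strong convexity of $\tfrac12\|\cdot\|^2$ to land on $E_{1k}$. The ``delicate bookkeeping'' you flag at the end is in fact an exact identity---the parameter choice $a_k=\frac{\alpha\gamma(1+\sigma A_{k-1})}{L}$ makes $(1+\sigma A_{k-1})=\frac{a_kL}{\alpha\gamma}$ hold with equality, so there is no slack to control; the second quadratic block in $E_{1k}$ is meant to carry an overall factor $a_k$ (as it does in the paper's own proof of Lemma~\ref{lem:ode-E}), which resolves the apparent mismatch.
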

\begin{proof}
See Section \ref{sec:lem:ode-es}.
\end{proof}
In Lemma \ref{lem:ode-es}, the sequence $\{E_{1k}\}$ can be viewed as the errors we need to bound in each iteration. The upper bound of the sum of $\{E_{1k}\}$ is given in Lemma \ref{lem:ode-E} below. 

\begin{lemma}\label{lem:ode-E}
In Algorithm \ref{alg:ode},  $\forall k\in [K],$  we have 
$$\sum_{k=1}^K E_{1k}\le-\frac{L}{8\alpha}\sum_{k=1}^K a_{k-1}  \big(\|\vw_k-\vz_{k-1}\|^2 + \|\vw_{k-1}-\vz_{k-1}\|^2\big),
$$
where we define $a_0 := a_1$ for convenience.
\end{lemma}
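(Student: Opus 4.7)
The plan is to (i) combine the first-order optimality of $\vw_k$ from Step 6's prox-mapping with the three-point identity for the squared norm to eliminate the explicit gradient term in $E_{1k}$, leaving only a Lipschitz residual; (ii) apply Assumption~\ref{ass:lip} and Young's inequality to bound the residual; and (iii) sum over $k$ and re-index so that cushions left over at step $j$ absorb positive contributions from step $j+1$.

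Concretely, the standard prox-lemma applied to $\vw_k = P_{\vz_{k-1}}(\frac{\alpha}{L}F(\vw_{k-1}))$ with test point $\vu = \vz_k$ (using $1$-strong convexity of the prox-objective $\frac{1}{2\gamma}\|\cdot - \vz_{k-1}\|^2$) gives
\[
a_k\langle F(\vw_{k-1}),\vw_k - \vz_k\rangle \le \frac{a_k L}{2\alpha\gamma}\|\vz_k - \vz_{k-1}\|^2 - \frac{a_k L}{2\alpha\gamma}\|\vw_k - \vz_{k-1}\|^2 - \frac{a_k L}{2\alpha}\|\vw_k - \vz_k\|^2,
\]
while the three-point identity together with the $\gamma$-strong convexity of $\frac{1}{2}\|\cdot\|^2$ converts $\frac{a_k L}{\alpha\gamma}\langle \nabla \frac{1}{2}\|\vw_k - \vz_{k-1}\|^2, \vw_k - \vz_k\rangle$ into $\frac{a_k L}{2\alpha\gamma}(\|\vw_k - \vz_{k-1}\|^2 - \|\vz_k - \vz_{k-1}\|^2)$ plus a nonnegative Bregman remainder of size at least $\frac{a_k L}{2\alpha}\|\vz_k - \vw_k\|^2$. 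After adding these contributions and decomposing $a_k\langle F(\vw_k),\cdot\rangle = a_k\langle F(\vw_{k-1}),\cdot\rangle + a_k\langle F(\vw_k) - F(\vw_{k-1}),\cdot\rangle$, all cross-terms cancel and one obtains
\[
E_{1k} \le a_k\langle F(\vw_k) - F(\vw_{k-1}),\vw_k - \vz_k\rangle - \frac{L}{2\alpha\gamma}\|\vw_k - \vz_{k-1}\|^2 - \frac{L}{2\alpha}\|\vw_k - \vz_k\|^2.
\]
Then Assumption~\ref{ass:lip} plus Young's inequality with parameter $\beta_k = a_k L/\alpha$ (which consumes only the $a_k$-fraction of the $\|\vw_k - \vz_k\|^2$ cushion), followed by the triangle-inequality split $\|\vw_k - \vw_{k-1}\|^2 \le 2\|\vw_k - \vz_{k-1}\|^2 + 2\|\vw_{k-1} - \vz_{k-1}\|^2$, reduces $E_{1k}$ to a combination of three negative quadratic terms plus a positive $a_k L\alpha\|\vw_{k-1} - \vz_{k-1}\|^2$.

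Summing over $k$ and shifting the index in the last summand, the positive $a_{j+1} L\alpha\|\vw_j - \vz_j\|^2$ contribution from step $j+1$ matches up with the leftover cushion $-\frac{L(1-a_j)}{2\alpha}\|\vw_j - \vz_j\|^2$ carried from step $j$, and the boundary term at $j=0$ vanishes because $\vw_0 = \vz_0$---which is exactly why the convention $a_0 := a_1$ is harmless. Matching coefficients against the target $-\frac{L a_{k-1}}{8\alpha}$ reduces to two scalar inequalities that the parameter constraint $\alpha \le \min\{\frac{1}{4\sqrt{2}},\frac{\sqrt 3}{4\sqrt\gamma}\}$ (equivalently $32\alpha^2 \le 1$ and $16\alpha^2\gamma \le 3$) is tailored to validate, leaving precisely the $\frac{1}{8\alpha}$ margin. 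The main obstacle is this final bookkeeping: in the non-monotone case $\sigma > 0$, the geometric ratio $a_k/a_{k-1} = 1 + \frac{\alpha\gamma\sigma}{L}$ must be tracked carefully, but the explicit recursion $a_k L = \alpha\gamma(1+\sigma A_{k-1})$ makes the absorption transparent and the stated constant $\frac{1}{8}$ goes through unchanged.
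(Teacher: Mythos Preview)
Your overall strategy—optimality of $\vw_k$ to isolate the Lipschitz residual, Young plus the triangle split $\|\vw_k-\vw_{k-1}\|^2\le 2\|\vw_k-\vz_{k-1}\|^2+2\|\vw_{k-1}-\vz_{k-1}\|^2$, then telescoping after a shift of index with the boundary term killed by $\vw_0=\vz_0$—is exactly the paper's route.

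The one genuine gap is in your execution of step~(i) in the non-Euclidean setting. When you apply the prox-lemma to $a_k\langle F(\vw_{k-1}),\vw_k-\vz_k\rangle$ and then a \emph{separate} three-point identity to $\frac{a_kL}{\alpha\gamma}\langle\nabla\tfrac12\|\vw_k-\vz_{k-1}\|^2,\vw_k-\vz_k\rangle$, the latter produces a Bregman remainder $\frac{a_kL}{\alpha\gamma}V_{\vw_k-\vz_{k-1}}(\vz_k-\vz_{k-1})$ for which you only give the \emph{lower} bound $\frac{a_kL}{2\alpha}\|\vz_k-\vw_k\|^2$. But this remainder enters $E_{1k}$ with a \emph{positive} sign, so a lower bound is useless for an upper bound on $E_{1k}$; in the Euclidean case the remainder equals $\frac{a_kL}{2\alpha}\|\vz_k-\vw_k\|^2$ exactly and is absorbed, but under Assumption~\ref{ass:non-Euclidean} alone there is no matching upper bound and your claimed cancellation fails. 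The paper sidesteps this entirely by using the first-order optimality of $\vw_k$ directly,
\[
\Big\langle F(\vw_{k-1})+\tfrac{L}{\alpha\gamma}\nabla_{\vw_k}\tfrac12\|\vw_k-\vz_{k-1}\|^2,\ \vw_k-\vz_k\Big\rangle\le 0,
\]
so the $F(\vw_{k-1})$ term and the gradient term drop \emph{together} with no remainder, yielding $E_{1k}\le a_k\langle F(\vw_k)-F(\vw_{k-1}),\vw_k-\vz_k\rangle-\frac{a_kL}{2\alpha\gamma}\|\vw_k-\vz_{k-1}\|^2-\frac{a_kL}{2\alpha}\|\vw_k-\vz_k\|^2$ in one line. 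After that, the paper uses Young with a \emph{fixed} weight ($ab\le \alpha a^2+\tfrac{1}{4\alpha}b^2$, not a $k$-dependent $\beta_k$), and the two scalar conditions $1-4\alpha^2\gamma\ge\tfrac14$ and $16\alpha^2\le a_{k-1}/a_k$—precisely what $\alpha\le\min\{\tfrac{1}{4\sqrt2},\tfrac{\sqrt3}{4\sqrt\gamma}\}$ is designed for—give the $\frac{L}{8\alpha}a_{k-1}$ coefficient. Your $a_k$-bookkeeping (the missing overall $a_k$ factor in the displayed bound, the cushion ``$-\frac{L(1-a_j)}{2\alpha}$'') should be revisited once step~(i) is done this cleaner way.
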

\begin{proof}
See Section \ref{sec:lem:ode-E}.
\end{proof}

By Lemma \ref{lem:ode-E}, $\forall\; 0 < \alpha \le \min\Big\{\frac{1}{4\sqrt{2}}, \frac{\sqrt{3}}{4\sqrt{\gamma}}\Big\}$ and $k\in[K],$ $\sum_{k=1}^K E_{1k}$ is upper bounded by the sum of  strictly negative terms about $\|\vw_k-\vz_{k-1}\|^2 + \|\vw_{k-1}-\vz_{k-1}\|^2$, which makes it possible to give a upper bound about $\min_{k\in[K]}(\|\vw_k-\vz_{k-1}\| + \|\vw_{k-1}-\vz_{k-1}\|)$. To show the guarantees by restricted strong merit function and the distance $\|\vw_k-\vw^*\|$, we give Lemma \ref{lem:ode-strong-dist}.

\begin{lemma}\label{lem:ode-strong-dist}
In Algorithm \ref{alg:ode},  $\forall k\in [K],$ we have,
\begin{align}
\sup_{\vw\in\gW,\|\vw_k-\vw\|\le D
}\langle F(\vw_k), \vw_k-\vw\rangle\le  \big(1+ \frac{\delta}{\alpha\gamma}\big)DL  (\|\vw_k - \vz_{k-1}\| + \|\vz_{k-1} - \vw_{k-1}\|).\label{eq:strong-deter}
\end{align}
If $\sigma>0,$ then we have 
\begin{eqnarray}
\|\vw_k-\vw^*\|\le \big(1+ \frac{\delta}{\alpha\gamma}\big) \frac{L}{\sigma}
(\|\vw_k - \vz_{k-1}\| + \|\vz_{k-1} - \vw_{k-1}\|).\label{eq:distance-deter}
\end{eqnarray}
\end{lemma}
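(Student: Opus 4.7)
The plan is to bound the inner product $\langle F(\vw_k), \vw_k - \vw\rangle$ by decomposing it around the ``stale'' gradient $F(\vw_{k-1})$ that the algorithm actually used in Step~6, and then exploit the first-order optimality of the prox-mapping that defines $\vw_k$.

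First I would write
\begin{align*}
\langle F(\vw_k), \vw_k - \vw\rangle = \underbrace{\langle F(\vw_k) - F(\vw_{k-1}), \vw_k - \vw\rangle}_{(\mathrm{I})} + \underbrace{\langle F(\vw_{k-1}), \vw_k - \vw\rangle}_{(\mathrm{II})}.
\end{align*}
Term (I) is handled by Assumption~\ref{ass:lip} and Cauchy--Schwarz in the dual-norm pairing, giving $(\mathrm{I}) \le L\|\vw_k - \vw_{k-1}\|\cdot\|\vw_k - \vw\|$, and the triangle inequality $\|\vw_k - \vw_{k-1}\| \le \|\vw_k - \vz_{k-1}\| + \|\vz_{k-1} - \vw_{k-1}\|$ already produces the desired two-term factor. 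For term (II), Step~6 of Algorithm~\ref{alg:ode} means $\vw_k$ minimizes $\langle \tfrac{\alpha}{L}F(\vw_{k-1}), \vz\rangle + \tfrac{1}{2\gamma}\|\vz - \vz_{k-1}\|^2$ over $\gW$, so the first-order condition yields
\begin{align*}
\langle F(\vw_{k-1}), \vw_k - \vw\rangle \le \frac{L}{\alpha\gamma}\big\langle \nabla_{\vw_k}\tfrac{1}{2}\|\vw_k - \vz_{k-1}\|^2, \vw - \vw_k\big\rangle,
\end{align*}
and then the gradient bound \eqref{eq:grad-norm} from Assumption~\ref{ass:non-Euclidean} gives $(\mathrm{II}) \le \tfrac{L\delta}{\alpha\gamma}\|\vw_k - \vz_{k-1}\|\cdot\|\vw_k - \vw\|$.

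Combining (I) and (II), I obtain the key unconditional estimate
\begin{align*}
\langle F(\vw_k), \vw_k - \vw\rangle \le \Big(1 + \frac{\delta}{\alpha\gamma}\Big) L\,\|\vw_k - \vw\|\big(\|\vw_k - \vz_{k-1}\| + \|\vz_{k-1} - \vw_{k-1}\|\big).
\end{align*}
Taking the supremum over $\vw \in \gW$ with $\|\vw_k - \vw\|\le D$ immediately produces \eqref{eq:strong-deter}. For the distance bound \eqref{eq:distance-deter}, I would specialize $\vw := \vw^*$ in the above display and invoke Assumption~\ref{ass:strong-weak} together with the divergence lower bound $V_{\vv}(\vw)\ge\tfrac{\gamma}{2}\|\vw-\vv\|^2$ to obtain $\langle F(\vw_k), \vw_k - \vw^*\rangle \ge \sigma\|\vw_k - \vw^*\|^2$; dividing through by $\|\vw_k - \vw^*\|$ (the case $\vw_k = \vw^*$ being trivial) gives \eqref{eq:distance-deter}.

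The only subtle step is the use of the optimality condition in (II): one has to remember that the effective regularizer is $\tfrac{1}{2\gamma}\|\cdot - \vz_{k-1}\|^2$, which produces the factor $\tfrac{L}{\alpha\gamma}$ and lets the bound \eqref{eq:grad-norm} absorb the non-Euclidean character of the norm via $\delta$. Everything else is a direct Lipschitz plus triangle-inequality exercise, so no serious obstacle is expected.
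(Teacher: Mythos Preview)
Your proposal is correct and matches the paper's proof essentially line for line: the paper adds and subtracts $F(\vw_{k-1}) + \tfrac{L}{\alpha\gamma}\nabla_{\vw_k}\tfrac{1}{2}\|\vw_k-\vz_{k-1}\|^2$ in a single step (so the optimality condition kills the entire second piece at once), whereas you split off $F(\vw_{k-1})$ first and then invoke the optimality condition, but the resulting bounds via Cauchy--Schwarz, Lipschitzness, the triangle inequality, and \eqref{eq:grad-norm} are identical. The distance bound is also handled the same way, using both Bregman terms in Assumption~\ref{ass:strong-weak} to get the factor $\sigma$ rather than $\sigma/2$.
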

\begin{proof}
See Section \ref{sec:lem:ode-strong-dist}.
\end{proof}

Then combining Lemmas \ref{lem:ode-es}, \ref{lem:ode-E} and \ref{lem:ode-strong-dist}, we obtain Theorem \ref{thm:ode} in main body (see Section \ref{sec:thm:ode} for the proof.).

\section{Convergence Analysis of Stochastic Optimistic Dual Extrapolation}\label{sec:sto}
We can extend the proof for the OptDE method in Section \ref{sec:ode} to the stochastic setting for Lemmas \ref{lem:sto:es},  \ref{lem:no-E} and \ref{lem:no-strong-dist} and then obtain Theorems \ref{thm:sode}. 
First, we extend Lemma \ref{lem:ode-es} into Lemma \ref{lem:sto:es}. 
\begin{lemma}\label{lem:sto:es}
In Algorithm \ref{alg:sode-refor}, $\forall k\in [K]$, we have the following inequality: $\forall \vu,\vw_0\in\gW$, let 
\begin{align}
E_{2k} := & a_k\Big\langle   F(\vw_{k};\xi_k) +\frac{L^2 a_k}{{(\alpha\gamma)^2}} \nabla_{\vw_k} \frac{1}{2}\|\vw_k- \vz_{k-1}\|^2, \vw_{k}-\vz_k\Big\rangle\nonumber\\
&-\frac{L^2 a_k}{{(\alpha\gamma)^2}}\Big(\frac{1}{2}\|\vw_k-\vz_{k-1}\|^2 + \frac{\gamma}{2}\|\vw_k-\vz_k\|^2 \Big) + a_k\langle F(\vw_k)-F(\vw_k;\xi_k), \vw_k-\vu\rangle,
\label{eq:sto:e}
\end{align}
then we have 
\begin{align}
&\E\bigg[\sum_{k=1}^{K}a_k\left(\langle F(\vw_k), \vw_k-\vu\rangle - \frac{\sigma}{\gamma} V_{\vw_k-\vw_0}(\vu-\vw_0)\right)\bigg] \le \E\bigg[\sum_{k=1}^{K}E_{2k}\bigg] +\frac{1}{2\gamma}\|\vu-\vw_0\|^2.\label{eq:sto:es1}
\end{align}
\end{lemma}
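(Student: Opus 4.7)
The plan is to mirror the deterministic argument behind Lemma \ref{lem:ode-es}, but carried out with the stochastic gradients $F(\vw_{k-1};\xi_{k-1})$ and $F(\vw_k;\xi_k)$ that actually appear in Algorithm \ref{alg:sode-refor}. I will first derive a pathwise inequality whose left-hand side involves $F(\vw_k;\xi_k)$, then convert it to the true $F(\vw_k)$ via a simple add-and-subtract manoeuvre, and finally take total expectation; the extra ``noise'' summand in $E_{2k}$ is exactly what is produced by that manoeuvre.

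First I would write down the first-order optimality conditions for the two prox-mappings in the algorithm. For $\vw_k = P_{\vz_{k-1}}\bigl(\tfrac{\alpha^2\gamma}{L^2 a_k}F(\vw_{k-1};\xi_{k-1})\bigr)$, testing against any $\vz\in\gW$ produces an inequality with the step size $\tfrac{\alpha^2\gamma}{L^2 a_k}$ multiplying $F(\vw_{k-1};\xi_{k-1})$ and with $\tfrac{1}{\gamma}\nabla\tfrac{1}{2}\|\vw_k-\vz_{k-1}\|^2$ playing the role of the prox gradient. The analogous condition for $\vz_k = P_{\vw_0}\bigl(\tfrac{1}{1+\sigma A_k}\vg_k\bigr)$ tested against any $\vu\in\gW$ carries the ``averaged'' direction $\vg_k$. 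These are the stochastic counterparts of the two conditions used in the proof of Lemma \ref{lem:ode-es}.

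Next I would telescope. Substituting the recursion $\vg_k = \vg_{k-1} + a_k\bigl(F(\vw_k;\xi_k) - \tfrac{\sigma}{\gamma}\nabla_{\vw_k}\tfrac{1}{2}\|\vw_k-\vw_0\|^2\bigr)$ into the optimality condition for $\vz_k$, applying the three-point identity for the Bregman divergence $V_\vv$ and its lower bound $V_\vv(\vw) \ge \tfrac{\gamma}{2}\|\vw-\vv\|^2$ from Assumption \ref{ass:non-Euclidean}, and combining with the optimality of $\vw_k$ evaluated at the test point $\vz_k$ yields a pathwise inequality of the form
\begin{equation*}
\sum_{k=1}^{K} a_k\Big(\langle F(\vw_k;\xi_k), \vw_k-\vu\rangle - \tfrac{\sigma}{\gamma}V_{\vw_k-\vw_0}(\vu-\vw_0)\Big) \le \sum_{k=1}^{K}\widetilde{E}_{2k} + \tfrac{1}{2\gamma}\|\vu-\vw_0\|^2,
\end{equation*}
where $\widetilde{E}_{2k}$ equals $E_{2k}$ minus its noise-correction summand. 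Compared with the deterministic proof, the only algebraic novelty is that the $\vw_k$-prox step size is $\tfrac{\alpha^2\gamma}{L^2 a_k}$ instead of $\tfrac{\alpha}{L}$, which rescales the coefficient on $\|\vw_k-\vz_{k-1}\|^2$ in the extrapolation error from $\tfrac{L}{\alpha\gamma}$ to $\tfrac{L^2 a_k}{(\alpha\gamma)^2}$, matching precisely the scaling in $E_{2k}$.

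Finally, writing $\langle F(\vw_k;\xi_k), \vw_k-\vu\rangle = \langle F(\vw_k), \vw_k-\vu\rangle - \langle F(\vw_k)-F(\vw_k;\xi_k), \vw_k-\vu\rangle$ and moving the second piece to the right absorbs it into $E_{2k}$ and produces the pathwise version of the advertised inequality; taking total expectation then finishes the proof. The main obstacle is careful bookkeeping in the telescoping step: one must verify that the $a_k$-dependent scaling $\tfrac{\alpha^2\gamma}{L^2 a_k}$ in Step 6 interacts with the $a_k$ weight used in the definition of $\vg_k$ so that the multipliers on $\langle F(\vw_k;\xi_k), \vw_k-\vu\rangle$ and on $\|\vw_k-\vz_{k-1}\|^2$ both emerge with the clean form stated in $E_{2k}$, without residual cross terms. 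The stochastic aspect itself is entirely painless here: the lemma deliberately keeps the bias-free summand $a_k\langle F(\vw_k)-F(\vw_k;\xi_k), \vw_k-\vu\rangle$ inside $E_{2k}$ so that variance and Cauchy--Schwarz arguments can be deferred to the proof of Theorem \ref{thm:sode}, where $\vu$ may be random and hence correlated with $\xi_k$ (for instance chosen by a supremum over $\gW$).
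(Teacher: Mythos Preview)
Your plan is essentially the paper's own argument: work with the dual-averaging ``estimation sequence'' $\hat{\psi}_k$, use the optimality condition for $\vz_k$ together with the $(1+\sigma A_{k-1})$-strong convexity of $\hat{\psi}_{k-1}$ and the three-point lower bound for $\tfrac{1}{2}\|\cdot\|^2$, telescope, then add-and-subtract $F(\vw_k)-F(\vw_k;\xi_k)$ and take expectation. Your observation that the step-size change from $\tfrac{\alpha}{L}$ to $\tfrac{\alpha^2\gamma}{L^2 a_k}$ replaces the coefficient $\tfrac{L}{\alpha\gamma}$ by $\tfrac{L^2 a_k}{(\alpha\gamma)^2}$ is exactly the key bookkeeping step, and in the paper this comes from the identity $1+\sigma A_{k-1} = \tfrac{L^2 a_k^2}{(\alpha\gamma)^2}$.

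One small correction: the optimality condition for $\vw_k$ is \emph{not} used in this lemma. The term $a_k\bigl\langle F(\vw_k;\xi_k)+\tfrac{L^2 a_k}{(\alpha\gamma)^2}\nabla_{\vw_k}\tfrac{1}{2}\|\vw_k-\vz_{k-1}\|^2,\,\vw_k-\vz_k\bigr\rangle$ is left intact inside $E_{2k}$; applying the $\vw_k$-optimality would replace $F(\vw_k;\xi_k)$ by $F(\vw_{k-1};\xi_{k-1})$ and is deferred to the proof of Lemma~\ref{lem:no-E}. So drop that ingredient from your telescoping step and the rest goes through exactly as you describe.
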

\vspace{-1mm}

\begin{proof}
See Section \ref{sec:lem:sto:es}.
\end{proof}
Compared with the $E_{1k}$ of Lemma \ref{lem:ode-es}, $E_{2k}$ contains an extra term $a_k\langle F(\vw_k)-F(\vw_k;\xi_k), \vw_k-\vu\rangle$. Then based on the definition of $E_{2k}$ and Assumption \ref{ass:unbias-variance}, we have Lemma \ref{lem:no-E}. 
\begin{lemma}\label{lem:no-E}
In Algorithm \ref{alg:sode-refor}, $\forall k\in [K]$ and $\forall \vu \in \gW,$ we have 
\begin{eqnarray}
\E\Big[\sum_{k=1}^K E_{2k}\Big]
\le -\E\Big[4\alpha\sum_{k=1}^K (\|\vw_k-\vz_{k-1}\|^2 + \|\vw_{k-1} - \vz_{k-1}\|^2)\Big]+ \frac{8\alpha s^2 K}{L^2}.
\end{eqnarray}
\end{lemma}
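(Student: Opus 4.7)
The plan is to mimic the deterministic proof of Lemma \ref{lem:ode-E}, but track the extra noise-induced terms and take expectation at the end. The key observation is that the update rule for $\vw_k$ (Step 6 of Algorithm \ref{alg:sode}) uses the stochastic gradient $F(\vw_{k-1};\xi_{k-1})$, so the optimality condition for the prox-mapping yields
\[
\Big\langle F(\vw_{k-1};\xi_{k-1}) + \tfrac{L^2 a_k}{(\alpha\gamma)^2}\nabla_{\vw_k}\tfrac{1}{2}\|\vw_k-\vz_{k-1}\|^2,\;\vw_k-\vz_k\Big\rangle \le 0.
\]
Subtracting and adding $F(\vw_{k-1};\xi_{k-1})$ inside the first inner product of $E_{2k}$, I can cancel this non-positive quantity and reduce $E_{2k}$ to
\[
a_k\langle F(\vw_k;\xi_k) - F(\vw_{k-1};\xi_{k-1}),\,\vw_k-\vz_k\rangle \;-\; \tfrac{L^2 a_k}{(\alpha\gamma)^2}\Big(\tfrac{1}{2}\|\vw_k-\vz_{k-1}\|^2+\tfrac{\gamma}{2}\|\vw_k-\vz_k\|^2\Big) \;+\; a_k\langle F(\vw_k)-F(\vw_k;\xi_k),\vw_k-\vu\rangle.
\]

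Next I handle the three pieces. The last term has conditional expectation zero, since $\vw_k$ is measurable with respect to $\xi_1,\dots,\xi_{k-1}$ (hence independent of $\xi_k$) and $\vu$ is deterministic, so this term drops after taking $\E$. For the Lipschitz-type cross term I split $F(\vw_k;\xi_k)-F(\vw_{k-1};\xi_{k-1})$ into three pieces by adding and subtracting $F(\vw_k)$ and $F(\vw_{k-1})$; the middle piece is controlled by Assumption \ref{ass:lip} as $\|F(\vw_k)-F(\vw_{k-1})\|_* \le L\|\vw_k-\vw_{k-1}\| \le L(\|\vw_k-\vz_{k-1}\|+\|\vw_{k-1}-\vz_{k-1}\|)$, while the two noise pieces have expected squares bounded by $s^2$ via Assumption \ref{ass:unbias-variance}. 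Applying Cauchy--Schwarz followed by Young's inequality with parameter proportional to $\frac{L^2 a_k}{(\alpha\gamma)^2}$ splits the product into a quadratic in $\|\vw_k-\vz_k\|$ that is absorbed by the negative $-\tfrac{L^2 a_k}{2(\alpha\gamma)^2}\gamma\|\vw_k-\vz_k\|^2$ term, plus a term of the form $C\tfrac{(\alpha\gamma)^2}{L^2 a_k}\|F(\vw_k;\xi_k)-F(\vw_{k-1};\xi_{k-1})\|_*^2$.

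Expanding the squared-norm via the three-piece split and taking expectation, the Lipschitz piece gives $\le C' a_k(\|\vw_k-\vz_{k-1}\|^2+\|\vw_{k-1}-\vz_{k-1}\|^2)$ which, with $a_k\ge a_1=\Theta(\alpha\gamma/L)$, is balanced against the $-\tfrac{L^2 a_k}{2(\alpha\gamma)^2}\|\vw_k-\vz_{k-1}\|^2$ term. Substituting the specific value $\alpha=\min\{\gamma/32,1/16\}$ and carrying out the bookkeeping leaves a residual of at least $-4\alpha(\|\vw_k-\vz_{k-1}\|^2+\|\vw_{k-1}-\vz_{k-1}\|^2)$ per step (using $\|\vw_k-\vw_{k-1}\|^2\le 2\|\vw_k-\vz_{k-1}\|^2+2\|\vw_{k-1}-\vz_{k-1}\|^2$). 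The two noise pieces each contribute at most $C''\tfrac{\alpha^2\gamma^2}{L^4 a_k}\cdot s^2$, which, summed over $k$ with $a_k\ge\alpha\gamma/L$, yields at most $\tfrac{8\alpha s^2 K}{L^2}$ after choosing the Young constants so the coefficients tally.

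The main obstacle is the constant chasing: one must pick the Young's parameter so that (i) the quadratic $\|\vw_k-\vz_k\|^2$ is fully absorbed, (ii) the residual Lipschitz contribution on $\|\vw_k-\vz_{k-1}\|^2+\|\vw_{k-1}-\vz_{k-1}\|^2$ leaves at least $4\alpha$ of negativity, and (iii) the noise-variance factor collapses to exactly $8\alpha/L^2$ per step. This is essentially a stochastic refinement of the calculation used for Lemma \ref{lem:ode-E}, and the conservative choice $\alpha=\min\{\gamma/32,1/16\}$ is what makes all three conditions simultaneously satisfiable.
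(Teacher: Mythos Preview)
Your approach is essentially the same as the paper's: use the optimality condition of $\vw_k$ to eliminate one inner product, split $F(\vw_k;\xi_k)-F(\vw_{k-1};\xi_{k-1})$ into a Lipschitz piece plus two noise pieces, apply Young's inequality, exploit $a_k\ge a_1=\alpha\gamma/L$ together with $\alpha=\min\{\gamma/32,1/16\}$ to get the constants $\tfrac{L^2a_k^2}{2(\alpha\gamma)^2}\ge 8\alpha$ and $\tfrac{L^2a_k^2}{4\alpha}\ge 8\alpha$, and use $\E_{\xi_k}[\langle F(\vw_k)-F(\vw_k;\xi_k),\vw_k-\vu\rangle]=0$.

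There is, however, one real slip in your write-up. You claim the bookkeeping ``leaves a residual of at least $-4\alpha(\|\vw_k-\vz_{k-1}\|^2+\|\vw_{k-1}-\vz_{k-1}\|^2)$ \emph{per step}.'' This is not true pointwise in $k$: after the Young step the $\|\vw_{k-1}-\vz_{k-1}\|^2$ contribution coming from $\|\vw_k-\vw_{k-1}\|^2\le 2\|\vw_k-\vz_{k-1}\|^2+2\|\vw_{k-1}-\vz_{k-1}\|^2$ appears with a \emph{positive} sign (roughly $+4\alpha$), not a negative one. What you actually get per step is something of the form
\[
E_{2k}\ \le\ -4\alpha\|\vw_k-\vz_{k-1}\|^2 \;+\; 4\alpha\|\vw_{k-1}-\vz_{k-1}\|^2 \;-\; 8\alpha\|\vw_k-\vz_k\|^2 \;+\; (\text{noise}) \;+\; (\text{martingale}).
\]
The desired negativity on $\|\vw_{k-1}-\vz_{k-1}\|^2$ only emerges after \emph{summing over $k$ and telescoping}: the $-8\alpha\|\vw_k-\vz_k\|^2$ term at step $k$ dominates the $+4\alpha\|\vw_k-\vz_k\|^2$ arriving from step $k+1$, and the leftover boundary term $+4\alpha\|\vw_0-\vz_0\|^2$ vanishes because $\vw_0=\vz_0$. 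Only then does one re-index to obtain $-4\alpha\sum_k(\|\vw_k-\vz_{k-1}\|^2+\|\vw_{k-1}-\vz_{k-1}\|^2)$. Make this telescoping explicit; otherwise the argument as written does not close.
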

\begin{proof}
See Section \ref{sec:lem:no-E}.
\end{proof}
Lemma \ref{lem:no-E} extends Lemma \ref{lem:ode-E} into the stochastic setting. Meanwhile, by the optimality condition of $\vw_k$, and Assumptions \ref{ass:lip}, \ref{ass:non-Euclidean} and \ref{ass:unbias-variance}, we can extend Lemma \ref{lem:ode-strong-dist} to Lemma \ref{lem:no-strong-dist}. 
\begin{lemma}\label{lem:no-strong-dist}
In Algorithm \ref{alg:sode-refor}, for the setting $\sigma =0$  and $\forall k\in[K],$ we have,
\begin{align}
&\E_{\xi_{k-1}}\Big[\sup_{\vw\in\gW, \|\vw_k-\vw\|\le D}  \langle F(\vw_k), \vw_k - \vw\rangle\Big]\nonumber\\
\le&\big(1 +\frac{\delta}{{\alpha\gamma}}\big)L D\E_{\xi_{k-1}}[(\|\vw_k-\vz_{k-1}\| + \|\vw_{k-1}-\vz_{k-1}\|)]   + \frac{L^2}{2}\E_{\xi_{k-1}}[\|\vw_k-\vw_{k-1}\|^2] +  \frac{s^2}{2L^2}.\nonumber
\end{align}
\end{lemma}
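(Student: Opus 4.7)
The plan is to mimic the deterministic argument of Lemma \ref{lem:ode-strong-dist} while adding bookkeeping for the stochastic noise $F(\vw_{k-1};\xi_{k-1})-F(\vw_{k-1})$. Since $\sigma=0$ gives $a_k=\alpha\gamma/L$, Step 6 of Algorithm \ref{alg:sode-refor} reduces to $\vw_k=P_{\vz_{k-1}}\!\big(\tfrac{\alpha}{L}F(\vw_{k-1};\xi_{k-1})\big)$, and the first-order optimality condition, combined with the dual-norm bound \eqref{eq:grad-norm} on $\nabla\tfrac{1}{2}\|\cdot\|^2$, produces
\begin{equation*}
\langle F(\vw_{k-1};\xi_{k-1}),\vw_k-\vw\rangle\;\le\;\tfrac{L\delta}{\alpha\gamma}\|\vw_k-\vz_{k-1}\|\cdot\|\vw-\vw_k\|\qquad\forall\vw\in\gW,
\end{equation*}
exactly paralleling the deterministic case but with the noisy operator.

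I would then decompose
\begin{equation*}
\langle F(\vw_k),\vw_k-\vw\rangle=\langle F(\vw_{k-1};\xi_{k-1}),\vw_k-\vw\rangle+\langle F(\vw_k)-F(\vw_{k-1}),\vw_k-\vw\rangle+\langle F(\vw_{k-1})-F(\vw_{k-1};\xi_{k-1}),\vw_k-\vw\rangle.
\end{equation*}
Using $\|\vw-\vw_k\|\le D$, the optimality bound above, Assumption \ref{ass:lip}, and the triangle inequality $\|\vw_k-\vw_{k-1}\|\le\|\vw_k-\vz_{k-1}\|+\|\vw_{k-1}-\vz_{k-1}\|$, the first two summands combine into the leading $\big(1+\tfrac{\delta}{\alpha\gamma}\big)LD\bigl(\|\vw_k-\vz_{k-1}\|+\|\vw_{k-1}-\vz_{k-1}\|\bigr)$ term, matching the deterministic part of the claimed inequality.

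The heart of the stochastic extension lies in controlling the third summand, the noise residual $\langle F(\vw_{k-1})-F(\vw_{k-1};\xi_{k-1}),\vw_k-\vw\rangle$. I would further split $\vw_k-\vw=(\vw_k-\vw_{k-1})+(\vw_{k-1}-\vw)$. The inner product against $\vw_k-\vw_{k-1}$ is treated by Cauchy--Schwarz followed by weighted Young's inequality with weight $L^2$,
\begin{equation*}
\|F(\vw_{k-1})-F(\vw_{k-1};\xi_{k-1})\|_*\cdot\|\vw_k-\vw_{k-1}\|\;\le\;\tfrac{L^2}{2}\|\vw_k-\vw_{k-1}\|^2+\tfrac{1}{2L^2}\|F(\vw_{k-1})-F(\vw_{k-1};\xi_{k-1})\|_*^2,
\end{equation*}
and after taking $\E_{\xi_{k-1}}$ and invoking the variance bound in Assumption \ref{ass:unbias-variance}, this yields precisely the $\tfrac{L^2}{2}\E_{\xi_{k-1}}[\|\vw_k-\vw_{k-1}\|^2]+\tfrac{s^2}{2L^2}$ terms in the claim.

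The hard part will be the residual $\langle F(\vw_{k-1})-F(\vw_{k-1};\xi_{k-1}),\vw_{k-1}-\vw\rangle$: $\vw_{k-1}$ and $F(\vw_{k-1})$ are measurable with respect to history prior to $\xi_{k-1}$, so if the sup-achieving $\vw$ were independent of $\xi_{k-1}$, the conditional expectation would vanish by unbiasedness; however, the constraint $\|\vw-\vw_k\|\le D$ couples $\vw$ to $\xi_{k-1}$ through $\vw_k$. I would decouple by introducing the ``shadow iterate'' $\hat\vw_k:=P_{\vz_{k-1}}\!\bigl(\tfrac{\alpha}{L}F(\vw_{k-1})\bigr)$, which depends only on the pre-$\xi_{k-1}$ history, and re-expressing $\vw-\vw_k=(\vw-\hat\vw_k)-(\vw_k-\hat\vw_k)$. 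The non-expansiveness of the prox-mapping (via Assumption \ref{ass:non-Euclidean}'s $\gamma$-strong convexity of $\tfrac{1}{2}\|\cdot\|^2$) gives $\|\vw_k-\hat\vw_k\|\le\tfrac{\alpha}{L\gamma}\|F(\vw_{k-1})-F(\vw_{k-1};\xi_{k-1})\|_*$, so the induced perturbation reabsorbs into the weighted Young estimate from the previous step, while the remaining $\hat\vw_k$-centered piece has conditional mean zero. Assembling all bounds then gives the claimed inequality; this decoupling/re-centering is the most delicate step, since a naive sup--through--expectation would produce an unwanted $\mathcal{O}(Ds)$ contribution not present in the Lemma.
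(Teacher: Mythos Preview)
Your decomposition and bookkeeping mirror the paper's proof almost line for line: the paper also uses the optimality condition of $\vw_k$ (with $a_k=\alpha\gamma/L$ when $\sigma=0$) to kill the $\langle F(\vw_{k-1};\xi_{k-1})+\tfrac{L}{\alpha\gamma}\nabla\tfrac{1}{2}\|\vw_k-\vz_{k-1}\|^2,\vw_k-\vw\rangle$ piece, then applies Lipschitz continuity, \eqref{eq:grad-norm}, the triangle inequality through $\vz_{k-1}$, and the same weighted Young split on $\|F(\vw_{k-1})-F(\vw_{k-1};\xi_{k-1})\|_*\|\vw_k-\vw_{k-1}\|$ to produce the $\tfrac{L^2}{2}\|\vw_k-\vw_{k-1}\|^2+\tfrac{s^2}{2L^2}$ terms.

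Where you diverge is in your treatment of the residual $\langle F(\vw_{k-1})-F(\vw_{k-1};\xi_{k-1}),\vw_{k-1}-\vw\rangle$. You correctly flag the coupling between the constraint $\|\vw-\vw_k\|\le D$ and $\xi_{k-1}$ as delicate; the paper's own proof does not engage with this at all---it simply records that the conditional expectation vanishes for each \emph{fixed} $\vw\in\gW$ and then drops the term in the final display, implicitly passing the expectation through the supremum. Your shadow-iterate idea is a sensible attempt to repair this, but note that it does not close the gap as stated: even after re-centering the ball at $\hat\vw_k$, the supremum $\sup_{\|\vw-\hat\vw_k\|\le D'}\langle F(\vw_{k-1})-F(\vw_{k-1};\xi_{k-1}),\vw_{k-1}-\vw\rangle$ is attained at a $\vw$ depending on the noise direction, so its conditional mean is of order $D'\cdot s$, not zero---exactly the $O(Ds)$ artifact you wished to avoid. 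In short, your argument is at least as rigorous as the paper's on this point, and you deserve credit for isolating the issue; but neither your sketch nor the paper's proof fully resolves it.
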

\begin{proof}
See Section \ref{sec:lem:no-strong-dist}.
\end{proof}
Then combining Lemmas \ref{lem:sto:es}, \ref{lem:no-E} and \ref{lem:no-strong-dist}, we obtain Theorem \ref{thm:sode} for the SOptDE method in the main body (see Section \ref{sec:thm:sode} for the proof).

\subsection{The $O\left(\frac{1}{\epsilon^2}\log\frac{1}{\epsilon}\right) $ rate in terms of restricted strong merit function under Assumption \ref{ass:strong-weak}}

It turns out that with the conservative setting $a_k = \frac{\alpha\gamma\sqrt{1+\sigma A_{k-1}}}{L}$, we can not obtain strong convergence results in terms of restricted strong merit function. To obtain the rate $O\left(\frac{1}{\epsilon^2}\log\frac{1}{\epsilon}\right)$, we need adopt the more aggressive setting $a_k = \frac{\alpha\gamma{(1+\sigma A_{k-1}})}{L}$ with a large batch size strategy, which is given in Algorithm \ref{alg:sode-refor-v2}. 
With this setting, we have Proposition \ref{thm:ode-v2}.

\begin{algorithm}[ht!]
\caption{Stochastic Optimistic Dual Extrapolation (\textbf{Version 2})}\label{alg:sode-refor-v2}
\begin{algorithmic}[1]
\STATE \textbf{Input: } Lipshitz constant $L >0$ from Assumption \ref{ass:lip}, $\gamma, \delta>0$ from Assumption \ref{ass:non-Euclidean}. The VIP$(F,\gW)$ satisfying Assumption \ref{ass:weak} ($\sigma = 0$) or  Assumption \ref{ass:strong-weak} ($\sigma > 0$). 
\STATE $A_0 = 0, 0 < \alpha \le \min\Big\{ \frac{1}{8}, \frac{\sqrt{3}}{4\sqrt{2\gamma}}\Big\}.$
\STATE $\vw_0=\vz_0\in\gW,  \vg_0 = \vzero. $ 
\vspace{0.015in}
\FOR{ $k = 1,2,3,\ldots, K$}
\STATE $a_k = \frac{\alpha\gamma({1+\sigma A_{k-1}})}{L}, A_k = A_{k-1} + a_{k}$.
\STATE $\vw_{k} = \argmin_{\vw\in\gW}\Big\{\langle F(\vw_{k-1};\xi_{k-1}), \vw\rangle + \frac{L }{2\alpha\gamma}\|\vw - \vz_{k-1}\|^2\Big\}.$
\STATE $\vg_k = \vg_{k-1} + a_k\big(F(\vw_k;\xi_{k}) - \frac{\sigma}{\gamma} \nabla_{\vw_k}\frac{1}{2}\|\vw_k-\vw_0\|^2\big).$
\STATE $\vz_k = \argmin_{\vz\in\gW}\Big\{\langle \vg_k, \vz\rangle + \frac{1+\sigma A_k}{2\gamma}\|\vz - \vw_0\|^2\Big\} $
\ENDFOR
\STATE $\tilde{\vw}_K = \vw_k$, where $k$ is chosen at random with probability distribution $\{\frac{a_1}{A_K}, \frac{a_2}{A_K}, \ldots, \frac{a_K}{A_K}\}.$   
\STATE \textbf{return} $\tilde{\vw}_K.$
\end{algorithmic}	
\end{algorithm}

\begin{proposition}\label{thm:ode-v2}
Let Assumptions \ref{ass:lip}, \ref{ass:non-Euclidean},  \ref{ass:strong-weak} and \ref{ass:unbias-variance} hold. After $K$ iterations, Algorithm \ref{alg:sode-refor-v2} returns a $\tilde{\vw}_K$ such that
\begin{align}
&\E\Big[\sup_{\vw\in\gW, \|\tilde{\vw}_K-\vw\|\le D}  \langle F(\tilde{\vw}_K), \tilde{\vw}_K - \vw\rangle\Big]\nonumber\\
&\le C_1 L D \sqrt{ \frac{\|\vw^*-\vw_0\|^2}{L(A_{K-1} + a_1)}+ \frac{s^2}{L^2}}+L^2\Big(\frac{\|\vw^*-\vw_0\|^2}{L(A_{K-1} + a_1)} + \frac{s^2}{L^2}\Big) + \frac{s^2}{2L^2}.\label{eq:sode-sigma0-v2}
\end{align}
with $C_1 :=4\big(1+ \frac{\delta}{{\alpha\gamma}}\big)\sqrt{\frac{\alpha}{\gamma}},$ $a_1 = \frac{{\alpha\gamma}}{L}$,
and 
\begin{equation}
 A_{K-1} =
\frac{1}{\sigma}\Big(1+\frac{{\alpha\gamma}\sigma}{L}\Big)^{K-1}-\frac{1}{\sigma}. \label{eq:A-k-v2}
\end{equation}
\end{proposition}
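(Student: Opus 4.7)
\proof
The plan is to mirror the three-lemma scaffold that yields Theorem~\ref{thm:sode} (namely Lemmas~\ref{lem:sto:es}, \ref{lem:no-E}, \ref{lem:no-strong-dist}), but re-derive each step with the aggressive step-size $a_k = \frac{\alpha\gamma(1+\sigma A_{k-1})}{L}$ rather than the conservative $\frac{\alpha\gamma\sqrt{1+\sigma A_{k-1}}}{L}$, and with the step-size in the extrapolation Step~6 of Algorithm~\ref{alg:sode-refor-v2} adjusted from $\frac{\alpha^2\gamma}{L^2 a_k}$ to $\frac{\alpha\gamma}{L(1+\sigma A_{k-1})}$ (which is built into the $\argmin$ form used in Step~6). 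I expect exactly the same $A_{K-1}$ recursion \eqref{eq:A-k-v2} as in the deterministic Theorem~\ref{thm:ode} to fall out, so the denominator $L(A_{K-1}+a_1)$ in \eqref{eq:sode-sigma0-v2} takes the geometric form $\bigl(1+\alpha\gamma\sigma/L\bigr)^{K-1}$.

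First, I would write down the stochastic estimation-sequence inequality in the spirit of Lemma~\ref{lem:sto:es}: use the optimality condition of $\vz_k$ together with the strong convexity of $\tfrac12\|\cdot\|^2$ (Assumption~\ref{ass:non-Euclidean}) to telescope the dual-averaging step, obtaining
\begin{align*}
\E\Bigl[\sum_{k=1}^{K} a_k\bigl(\langle F(\vw_k),\vw_k-\vu\rangle -\tfrac{\sigma}{\gamma}V_{\vw_k-\vw_0}(\vu-\vw_0)\bigr)\Bigr] \le \E\Bigl[\sum_{k=1}^{K}\tilde E_k\Bigr] + \tfrac{1}{2\gamma}\|\vu-\vw_0\|^2,
\end{align*}
where $\tilde E_k$ is the analogue of $E_{2k}$ adapted to the new coefficient on the $\tfrac12\|\vw_k-\vz_{k-1}\|^2$ penalty. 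Then I would bound $\E[\sum \tilde E_k]$ as in Lemma~\ref{lem:no-E}: use the optimality condition of $\vw_k$ and Lipschitz continuity (Assumption~\ref{ass:lip}) to dominate the inner product $\langle F(\vw_k;\xi_k), \vw_k-\vz_k\rangle$ by the residual norms, cancel with the quadratic penalty, and absorb the stochastic cross term $a_k\langle F(\vw_k)-F(\vw_k;\xi_k),\vw_k-\vu\rangle$ by conditioning on the filtration and Young's inequality, paying a price of $s^2$ per iteration (multiplied by the effective step-size, hence the $s^2/L^2$ appearance). The constraint $\alpha\le\min\{1/8,\sqrt3/(4\sqrt{2\gamma})\}$ is precisely what I would use to make the resulting quadratic terms in $\|\vw_k-\vz_{k-1}\|^2+\|\vw_{k-1}-\vz_{k-1}\|^2$ come out strictly negative (up to the $s^2$ slack).

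Next, to convert the sum bound into a statement about restricted strong merit function, I would apply Lemma~\ref{lem:no-strong-dist} iterate-by-iterate (which bounds $\sup_{\vw:\|\vw_k-\vw\|\le D}\langle F(\vw_k),\vw_k-\vw\rangle$ by $LD$ times a residual plus a stochastic correction), take the $\{a_k/A_K\}$-weighted average, and use Jensen's inequality on $\sqrt{\cdot}$ to trade $\E[\sqrt{X}]$ for $\sqrt{\E[X]}$. The randomized choice of $\tilde\vw_K$ per Step~10 of Algorithm~\ref{alg:sode-refor-v2} is exactly what makes this averaging step an expectation over $\tilde\vw_K$, and the $A_{K-1}+a_1$ in the denominator emerges because the weighted-residual sum is bounded by the initial-distance term $\tfrac{1}{2\gamma}\|\vu-\vw_0\|^2$ divided by $A_K\ge A_{K-1}+a_1$.

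The main obstacle I anticipate is bookkeeping in the second step: because $a_k$ now grows geometrically, both the variance contribution $a_k^2 s^2/L^2$ and the residual penalty $a_k L/(\alpha\gamma)\cdot\bigl(\|\vw_k-\vz_{k-1}\|^2+\|\vw_{k-1}-\vz_{k-1}\|^2\bigr)$ scale multiplicatively with $A_{k}$, so the negative-quadratic cancellation argument from Lemma~\ref{lem:no-E} must be re-verified at this new scaling — in particular, one has to check that the cross-term from $\langle F(\vw_k)-F(\vw_k;\xi_k),\vw_k-\vz_{k-1}\rangle$ (produced when estimating $\langle F(\vw_{k-1};\xi_{k-1}),\vw_k-\vz_k\rangle$ by adding and subtracting $F(\vw_{k-1})$ and $F(\vw_k)$) still telescopes properly against the aggressive quadratic penalty. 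Once this variance/penalty balance is verified under the stated bound on $\alpha$, plugging back into the estimation-sequence inequality and taking $\vu=\vw^\star$ (so that the $-\tfrac{\sigma}{\gamma}V$ term is nonnegative by Assumption~\ref{ass:strong-weak} and can be dropped) yields \eqref{eq:sode-sigma0-v2} with the claimed constant $C_1=4(1+\delta/(\alpha\gamma))\sqrt{\alpha/\gamma}$.
\endproof
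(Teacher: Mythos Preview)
Your proposal is correct and follows exactly the approach the paper indicates: the paper itself does not give a detailed proof of Proposition~\ref{thm:ode-v2}, stating only that it ``follows the same pipeline of proving Theorem~\ref{thm:sode}, except that we use the setting $a_k = \frac{\alpha\gamma(1+\sigma A_{k-1})}{L}$'' and leaving the details as an exercise. Your plan to re-run Lemmas~\ref{lem:sto:es}, \ref{lem:no-E}, and \ref{lem:no-strong-dist} with the aggressive step-size, then combine them as in the proof of Theorem~\ref{thm:sode}, is precisely that pipeline; one small correction is that in Algorithm~\ref{alg:sode-refor-v2} the extrapolation penalty coefficient is the constant $\frac{L}{2\alpha\gamma}$ (matching the deterministic Algorithm~\ref{alg:ode-refor}), not $\frac{L(1+\sigma A_{k-1})}{2\alpha\gamma}$ as your parenthetical suggests, which in fact simplifies the analogue of Lemma~\ref{lem:no-strong-dist}.
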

The proof of Proposition \ref{thm:ode-v2} follows the same  pipeline of proving Theorem \ref{thm:sode}, except that we use the setting $a_k = \frac{\alpha\gamma({1+\sigma A_{k-1}})}{L} $ that is also used in Algorithm \ref{alg:ode-refor}. We leave  the proof of Proposition \ref{thm:ode-v2} as a simple exercise. 

In Proposition \ref{thm:ode-v2}, if we hope the variance of the stochastic estimation $\{F(\vw_k;\xi_k)\}$ as $s^2=O(\frac{1}{A_{K-1} +a_1}), $ then we need $O(A_{K-1} +a_1)$ stochastic samples per iteration. Meanwhile, to attain an expected $\epsilon$-accurate strong solution, we will need $O(\log \frac{1}{\epsilon})$ number of iterations. Thus the total number of stochastic samples we need is $O(\frac{1}{\epsilon^2}\log \frac{1}{\epsilon}).$

\subsection{The `` (quadratic) natural residual function'' \cite{iusem2017extragradient} and restricted strong merit function}
In our notation, for any $\vw\in\gW,$ the (quadratic) natural residual function in \cite{iusem2017extragradient}
is defined by: given $\eta>0,$
\begin{eqnarray}
r_{\eta}(\vw) = \|\vw - P_{\vw}(\eta F(\vw))\|^2,
\end{eqnarray}
which can be used to derive the restricted strong merit function as Proposition \ref{prop:qnrf}.

\begin{proposition}\label{prop:qnrf}
Let 
$\vw^{\prime} := P_{\vw}(\eta F(\vw)) = \argmin_{\vz\in\gW}\{ \langle \eta F(\vw), \vz \rangle + \frac{1}{2 \gamma}\|\vz - \vw\|^2\}.$
Then we have 
\begin{eqnarray}
\sup_{\vz\in\gW,\|\vw^{\prime}-\vz\|\le D
}\langle F(\vw^{\prime}), \vw^{\prime}-\vz\rangle\le \Big(L + \frac{\delta}{\eta \gamma}\Big)D\sqrt{r_{\eta}(\vw)}.
\end{eqnarray}
\end{proposition}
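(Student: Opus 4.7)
The plan is to use the first-order optimality condition of the prox-mapping defining $\vw'$, together with the Lipschitz assumption on $F$ and the dual-norm gradient bound from Assumption \ref{ass:non-Euclidean}, to convert the quadratic residual $r_\eta(\vw) = \|\vw - \vw'\|^2$ into a bound on the restricted merit function at $\vw'$.

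First I would write down the optimality condition for $\vw' = P_{\vw}(\eta F(\vw))$: for every $\vz\in\gW$,
\[
\Big\langle \eta F(\vw) + \tfrac{1}{\gamma}\nabla_{\vw'}\tfrac{1}{2}\|\vw'-\vw\|^2,\; \vz-\vw'\Big\rangle \ge 0,
\]
which rearranges to
\[
\langle F(\vw),\, \vw'-\vz\rangle \;\le\; \tfrac{1}{\eta\gamma}\Big\langle \nabla_{\vw'}\tfrac{1}{2}\|\vw'-\vw\|^2,\; \vz-\vw'\Big\rangle.
\]
Applying the generalized Cauchy--Schwarz inequality and then the bound \eqref{eq:grad-norm} from Assumption \ref{ass:non-Euclidean} (with $\|\vz-\vw'\|\le D$), the right-hand side is at most $\tfrac{\delta}{\eta\gamma}\,D\,\|\vw'-\vw\|$.

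Next I would bridge from $F(\vw)$ to $F(\vw')$ via the decomposition
\[
\langle F(\vw'),\,\vw'-\vz\rangle = \langle F(\vw'){-}F(\vw),\,\vw'-\vz\rangle + \langle F(\vw),\,\vw'-\vz\rangle.
\]
The first inner product is bounded by $\|F(\vw')-F(\vw)\|_*\,\|\vw'-\vz\|\le L\,\|\vw'-\vw\|\,D$ using Assumption \ref{ass:lip} together with $\|\vw'-\vz\|\le D$, and the second inner product is handled by the previous step. Adding the two bounds gives
\[
\langle F(\vw'),\,\vw'-\vz\rangle \le \Big(L + \tfrac{\delta}{\eta\gamma}\Big) D \,\|\vw'-\vw\|.
\]
Finally, noting $\|\vw'-\vw\|=\sqrt{r_\eta(\vw)}$ and taking the supremum over $\vz\in\gW$ with $\|\vw'-\vz\|\le D$ yields the claimed inequality.

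There is no real obstacle here; the whole argument is a few lines of linear-algebraic manipulation once the optimality condition is written down. The only point that deserves care is to apply the gradient-bound \eqref{eq:grad-norm} correctly in the non-Euclidean setting (the $\delta$ factor appears precisely because $\nabla\tfrac{1}{2}\|\cdot\|^2$ is not simply the identity), and to make sure the $D$-truncation in the supremum is used consistently on both the Lipschitz term and the prox-optimality term.
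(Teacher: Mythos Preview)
Your proposal is correct and follows essentially the same approach as the paper: both use the optimality condition of $\vw'$ together with Assumption~\ref{ass:lip} and the gradient bound \eqref{eq:grad-norm} to obtain $(L+\tfrac{\delta}{\eta\gamma})\|\vw'-\vw\|\|\vw'-\vz\|$, then substitute $\|\vw'-\vw\|=\sqrt{r_\eta(\vw)}$ and $\|\vw'-\vz\|\le D$. The only cosmetic difference is that the paper groups $F(\vw)+\tfrac{1}{\eta\gamma}\nabla_{\vw'}\tfrac12\|\vw'-\vw\|^2$ and drops their inner product with $\vw'-\vz$ in one step via optimality, whereas you first isolate $\langle F(\vw),\vw'-\vz\rangle$ and bound it by the gradient term; the algebra is identical.
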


\begin{proof}
It follows that 
\begin{eqnarray}
\langle F(\vw^{\prime}), \vw^{\prime}-\vz\rangle &= & \langle F(\vw^{\prime}) - \big( F(\vw) + \frac{1}{\eta}\nabla_{\vw^{\prime}} \frac{1}{2\gamma}\|\vw^{\prime}- \vw\|^2\big) , \vw^{\prime}-\vz\rangle
\nonumber\\
&&+\Big\langle F(\vw) + \frac{1}{\eta}\nabla_{\vw^{\prime}} \frac{1}{2\gamma}\|\vw^{\prime}- \vw\|^2,  \vw^{\prime}-\vz\Big\rangle\nonumber\\
&\overset{(a)}{\le}&  \langle F(\vw^{\prime}) - \big( F(\vw) + \frac{1}{\eta}\nabla_{\vw^{\prime}} \frac{1}{2\gamma}\|\vw^{\prime}- \vw\|^2\big) , \vw^{\prime}-\vz\rangle \nonumber\\
&\overset{(b)}{\le}&\|F(\vw^{\prime}) - F(\vw)\|_*\|\vw^{\prime}-\vz\| +   \frac{1}{\eta} \big\| \nabla_{\vw^{\prime}} \frac{1}{2\gamma}\|\vw^{\prime}-\vw\|^2 \big\|_*  \|\vw^{\prime}-\vz\|\nonumber\\
&\overset{(c)}{\le}& L\|\vw^{\prime}-\vw\| \|\vw^{\prime}-\vz\| +  \frac{1}{\eta\gamma}\delta\|\vw^{\prime}-\vw\| \|\vw^{\prime}-\vz\|\nonumber\\
&=& \Big(L + \frac{\delta}{\eta \gamma}\Big)\|\vw^{\prime}-\vw\| \|\vw^{\prime}-\vz\|.
\end{eqnarray}
where $(a)$ is by the optimality condition of $\vw^{\prime}$, $(b)$ is by the Cauchy-Schwarz inequality, $(c)$ is by the Lipschitz continuity of $F(\w)$ and the bounded assumption \eqref{eq:grad-norm}. 
So we have
\begin{eqnarray}
\sup_{\vz\in\gW,\|\vw^{\prime}-\vz\|\le D
}\langle F(\vw^{\prime}), \vw^{\prime}-\vz\rangle \le\Big(L + \frac{\delta}{\eta \gamma}\Big)D\|\vw^{\prime}-\vw\|   =\Big(L + \frac{\delta}{\eta \gamma}\Big)D\sqrt{r_{\eta}(\vw)}.
\end{eqnarray}
\end{proof}

\section{Proof of Section \ref{sec:ode}} 

By the definition of proximal operator \eqref{eq:proximal-mapping}, we can equivalently reformulate the optimistic dual extrapolation (OptDE) algorithm in the main body as Algorithm \ref{alg:ode-refor}. Then based on the definition of $\vg_k$ in Step 7 and the definition of the Bregman divergence $V_{\vw}(\vu) (\vw,\vu\in\gW)$, we can verify that 
\begin{eqnarray}
\vz_k = \argmin_{\vz\in\gW}\left\{\psi_k(\vz):=\sum_{i=1}^k a_i \Big(\langle F(\vw_i), \vz\!-\!\vu\rangle +\frac{\sigma}{\gamma}V_{\vw_i-\vw_0}(\vz-\vw_0) \Big)+ \frac{1}{2\gamma}\|\vz \!-\! \vw_0\|^2\right\}, \label{eq:ode-es}
\end{eqnarray}
where $\vu$ is an arbitrary vector in $\gW$ and is irrelevant to  the minimizer $\vz_k.$  In our context, $\psi_k(\vz)$ plays the role of a  ``generalized estimation sequence'' to help us conduct convergence analysis.  By the $\gamma$-strong convexity of the Bregman divergence $V_{\vw_i-\vw_0}(\vz-\vw_0)$, we know that $\psi_k(\vz)$ is strongly convex with strong convexity parameter $1+\sigma \sum_{i=1}^k a_i = 1+\sigma A_k.$  

\begin{algorithm}[ht!]
\caption{Optimistic Dual Extrapolation \textbf{(Reformulation)}}\label{alg:ode-refor}
\begin{algorithmic}[1]
\STATE \textbf{Input: } Lipschitz constant $L >0$ from Assumption \ref{ass:lip}, $\gamma, \delta>0$ from Assumption \ref{ass:non-Euclidean}. The VIP$(F,\gW)$ satisfying Assumption \ref{ass:weak} ($\sigma = 0$) or  Assumption \ref{ass:strong-weak} ($\sigma > 0$). 
\STATE $A_0 = 0,$
$0<\alpha \le \min\Big\{\frac{1}{4\sqrt{2}}, \frac{\sqrt{3}}{4\sqrt{\gamma}}  \Big\}$. 
\STATE $\vw_0=\vz_0\in\gW.$ 
\vspace{0.015in}
\FOR{ $k = 1,2,3,\ldots, K$}
\STATE $a_k = \frac{\alpha\gamma(1+\sigma A_{k-1})}{L}, A_k = A_{k-1} + a_{k}$.
\STATE $\vw_{k} =\argmin_{\vw\in\gW}\Big\{\big\langle F(\vw_{k-1}), \vw\rangle + \frac{L}{{2\alpha\gamma}}\|\vw- \vz_{k-1}\|^2\Big\}  .$
\STATE $\vg_k = \vg_{k-1} + a_k\big(F(\vw_k) - \frac{\sigma}{\gamma} \nabla_{\vw_k}\frac{1}{2}\|\vw_k-\vw_0\|^2\big).$
\STATE $\vz_k = \argmin_{\vz\in\gW}\Big\{\langle \vg_k, \vz\rangle + \frac{1+\sigma A_k}{2\gamma}\|\vz-\vw_0\|^2\Big\}.$
\ENDFOR
\STATE $\tilde{\vw}_K =\argmin_{\vw_k:k\in[K]}(\|\vw_k-\vz_{k-1}\| + \|\vw_{k-1}-\vz_{k-1}\|)$.
\STATE \textbf{return} $\tilde{\vw}_K.$
\end{algorithmic}	
\end{algorithm}

\subsection{Proof of Lemma \ref{lem:ode-es}}\label{sec:lem:ode-es}
\begin{proof}
Given the definition of the generalized estimation sequence $\psi_k(\vz)$ in \eqref{eq:ode-es} and the minimizer $\vz_k$ in Algorithm \ref{alg:ode-refor}, by the optimality condition of $\vz_k$, we have: $\forall \vu\in\gW,$
\begin{eqnarray}
\Big\langle \sum_{i=1}^k a_i (F(\vw_i) +  \frac{\sigma}{\gamma} \nabla V_{\vw_i-\vw_0}(\vz_k-\vw_0))+ 
\nabla_{\vz_k}\frac{1}{2\gamma}\|\vz_k-\vw_0\|^2, \vu - \vz_k\Big\rangle \ge 0. \label{eq:opti-cond}
\end{eqnarray}
Then we have $\forall k\in [K],$
\begin{eqnarray}
\psi_k(\vz_k)& = &\sum_{i=1}^k a_i\left(\langle F(\vw_i), \vz_k-\vu\rangle +\frac{\sigma}{\gamma}  V_{\vw_i-\vw_0}(\vz_k-\vw_0)\right)  + \frac{1}{2\gamma}\|\vz_k-\vw_0\|^2\nonumber\\
&\overset{(a)}{\le}&\frac{\sigma}{\gamma} \sum_{i=1}^k a_i\left(\langle  \nabla V_{\vw_i-\vw_0}(\vz_k-\vw_0), \vu-\vz_k\rangle + V_{\vw_i-\vw_0}(\vz_k-\vw_0)\right)\nonumber\\ &&\quad +\Big\langle\nabla_{\vz_k}\frac{1}{2\gamma}\|\vz_k-\vw_0\|^2, \vu-\vz_k\Big\rangle+\frac{1}{2\gamma}\|\vz_k-\vw_0\|^2\nonumber\\
&\overset{(b)}{\le}&\frac{\sigma}{\gamma} \sum_{i=1}^k a_i V_{\vw_i-\vw_0}(\vu-\vw_0) + \frac{1}{2\gamma}\|\vu-\vw_0\|^2,\label{eq:dis-103}
\end{eqnarray}
where $(a)$ is by the optimality condition \eqref{eq:opti-cond}, and $(b)$ is by the convexity of both $V_{\vw_i-\vw_0}(\vu-\vw_0)$ and $\frac{1}{2\gamma}\|\vu-\vw_0\|^2.$

Meanwhile for $k\ge 1$, by the definition of $\psi_k(\vz_k)$ , 
we have 
\begin{eqnarray}
\psi_{k} (\vz_{k})&=& \psi_{k-1} (\vz_{k}) + a_k\langle F(\vw_{k}), \vz_{k} - \vu\rangle\nonumber\\
&\overset{(a)}{\ge}& \psi_{k-1} (\vz_{k-1}) +  \frac{1 + \sigma A_{k-1}}{2}\|\vz_{k} - \vz_{k-1}\|^2 + a_k\langle F(\vw_{k}), \vz_{k} - \vu\rangle \nonumber\\
&=&\psi_{k-1} (\vz_{k-1}) +  \frac{1 + \sigma A_{k-1}}{2}\|\vz_{k} - \vz_{k-1}\|^2 + a_k\langle F(\vw_{k}), \vz_{k} - \vw_k\rangle \nonumber\\
&&+ a_k\langle F(
\vw_{k}), \vw_k - \vu\rangle,\label{eq:dis-1}
\end{eqnarray}
where $(a)$ is by the $(1+\sigma A_{k-1})$-strong convexity of $\psi_{k-1}(\vz).$
Meanwhile, by the strong convexity of $\frac{1}{2}\|\cdot\|^2$ in Assumption \ref{ass:non-Euclidean},  we have
\begin{align}
&a_k \left\langle F(\vw_{k}), \vw_{k}-\vz_k\right\rangle - \frac{1+\sigma A_{k-1}}{2}\|\vz_{k}-\vz_{k-1}\|^2\nonumber\\
&\le \Big\langle a_k F(\vw_{k}) + (1 + \sigma A_{k-1})\nabla_{\vw_k} \frac{1}{2}\|\vw_k- \vz_{k-1}\|^2, \vw_{k}-\vz_k\Big\rangle\nonumber\\
&\quad - (1 + \sigma A_{k-1}) \Big( \frac{1}{2}\|\vw_k-\vz_{k-1}\|^2 + \frac{\gamma}{2}\|\vw_k-\vz_k\|^2 \Big).\label{eq:dis-101}
\end{align}

Then combining \eqref{eq:dis-1} and \eqref{eq:dis-101}, and after simple arrangements, we have
\begin{eqnarray}
a_k\langle F(\vw_{k}), \vw_k - \vu\rangle &\le& \psi_{k} (\vz_{k}) - \psi_{k-1} (\vz_{k-1}) \nonumber\\
&&+ \Big\langle a_k F(\vw_{k}) + (1 + \sigma A_{k-1})\nabla_{\vw_k} \frac{1}{2}\|\vw_k- \vz_{k-1}\|^2, \vw_{k}-\vz_k\Big\rangle\nonumber\\
&& - (1 + \sigma A_{k-1}) \Big( \frac{1}{2}\|\vw_k-\vz_{k-1}\|^2 + \frac{\gamma}{2}\|\vw_k-\vz_k\|^2 \Big). \label{eq:dis-102}
\end{eqnarray}

Summing \eqref{eq:dis-102} from $k=1$ to $K$,  we have 
\begin{eqnarray}
&&\sum_{k=1}^{K}a_k\langle F(\vw_k), \vw_k-\vu\rangle  \nonumber\\
&\le& \sum_{k=1}^{K}\Big( \Big\langle a_k F(\vw_{k}) + (1 + \sigma A_{k-1})\nabla_{\vw_k} \frac{1}{2}\|\vw_k- \vz_{k-1}\|^2, \vw_{k}-\vz_k\Big\rangle \nonumber\\
&&-(1 + \sigma A_{k-1})\Big( \frac{1}{2}\|\vw_k-\vz_{k-1}\|^2 + \frac{\gamma}{2}\|\vw_k-\vz_k\|^2 \Big)\Big)+\psi_{K} (\vz_{K})-\psi_{0} (\vz_{0}) \nonumber\\
&\overset{(a)}{\le}& \sum_{k=1}^{K}\Big( \Big\langle a_k F(\vw_{k}) +(1 + \sigma A_{k-1}) \nabla_{\vw_k} \frac{1}{2}\|\vw_k- \vz_{k-1}\|^2, \vw_{k}-\vz_k\Big\rangle
\nonumber\\
&&\quad\quad\quad -(1 + \sigma A_{k-1})\Big(\frac{1}{2}\|\vw_k-\vz_{k-1}\|^2 + \frac{\gamma}{2}\|\vw_k-\vz_k\|^2 \Big)\Big) \nonumber\\
&&+\frac{\sigma}{\gamma}\sum_{k=1}^K a_kV_{\vw_k-\vw_0}(\vu-\vw_0) + \frac{1}{2\gamma}\|\vu-\vw_0\|^2,
\label{eq:dis-2}
\end{eqnarray}
where $(a)$ is by the fact $\psi_0(\vz_0) = 0$ and the upper bound of $\psi_K(\vz_K)$ by  \eqref{eq:dis-103}. 
By the setting $a_k = \frac{{\alpha\gamma}(1+\sigma A_{k-1})}{L}$ in Algorithm \ref{alg:ode-refor}  and  \eqref{eq:dis-2}, we have
\begin{eqnarray}
&&\sum_{k=1}^{K}a_k\langle F(\vw_k), \vw_k-\vu\rangle  \nonumber\\
&\le& \sum_{k=1}^{K}a_k\Bigg( \Big\langle  F(\vw_{k}) + \frac{L}{{\alpha\gamma}} \nabla_{\vw_k} \frac{1}{2}\|\vw_k- \vz_{k-1}\|^2, \vw_{k}-\vz_k\Big\rangle\nonumber\\
&&\quad\quad\quad\quad-\frac{L}{{\alpha\gamma}} \Big(\frac{1}{2}\|\vw_k-\vz_{k-1}\|^2 + \frac{\gamma}{2}\|\vw_k-\vz_k\|^2 \Big)\Bigg)\nonumber\\
&&+\frac{\sigma}{\gamma}\sum_{k=1}^K a_kV_{\vw_k-\vw_0}(\vu-\vw_0) +\frac{1}{2\gamma}\|\vu-\vw_0\|^2.
\end{eqnarray}

Then based on the definition of $E_{1k}$ in Lemma \ref{lem:ode-es}, after simple arrangements, Lemma \ref{lem:ode-es} is proved.

\end{proof}

\subsection{Proof of Lemma \ref{lem:ode-E}}\label{sec:lem:ode-E}
\begin{proof}
By the definition of $E_{1k}$ in Lemma \ref{lem:ode-es}, we have: $\forall k\in[K],$
\begin{eqnarray}
E_{1k}&=& a_k\Big( \Big\langle F(\vw_{k}) +\frac{L}{{\alpha\gamma}}  \nabla_{\vw_k} \frac{1}{2}\|\vw_{k}- \vz_{k-1}\|^2, \vw_{k}-\vz_k\Big\rangle\nonumber\\
&&- \frac{L}{{\alpha\gamma}} \Big( \frac{1}{2}\|\vw_{k}-\vz_{k-1}\|^2 + \frac{\gamma}{2}\|\vw_{k}-\vz_k\|^2 \Big)\Big)\nonumber\\
&\le& a_k \Big(\Big\langle F(\vw_{k}) - F(\vw_{k-1}), \vw_{k}-\vz_k\Big\rangle\nonumber\\
&&\quad\quad+\Big\langle F(\vw_{k-1}) + \frac{L}{{\alpha\gamma}}\nabla_{\vw_k} \frac{1}{2}\|\vw_{k}- \vz_{k-1}\|^2, \vw_{k}-\vz_k\Big\rangle\nonumber\\
&&\quad\quad-\frac{L}{{\alpha\gamma}}\Big( \frac{1}{2}\|\vw_{k}-\vz_{k-1}\|^2 + \frac{\gamma}{2}\|\vw_{k}-\vz_k\|^2 \Big)\Big).\label{eq:no-eg1}
\end{eqnarray}
Meanwhile, we have for all $\alpha>0,$
\begin{eqnarray}
&&\Big\langle F(\vw_{k}) - F(\vw_{k-1}), \vw_{k}-\vz_k\Big\rangle\nonumber\\
&\overset{(a)}{\le}& \|F(\vw_{k}) - F(\vw_{k-1})\|_* \|\vw_{k}-\vz_k\|\nonumber\\
&\overset{(b)}{\le}&L\|\vw_{k}-\vw_{k-1}\|\|\vw_{k}-\vz_k\|\nonumber\\
&\overset{(c)}{\le}&  L\alpha\|\vw_{k}-\vw_{k-1}\|^2 + \frac{L}{4\alpha}
 \|\vw_{k}-\vz_k\|^2\nonumber\\
&\overset{(d)}{\le}& L\alpha(\|\vw_k-\vz_{k-1}\| + \|\vz_{k-1} - \vw_{k-1}\|)^2 + \frac{L}{4\alpha}
 \|\vw_{k}-\vz_k\|^2\nonumber\\
&\overset{(e)}{\le}& {2L\alpha}(\|\vw_k-\vz_{k-1}\|^2 + \|\vz_{k-1} - \vw_{k-1}\|^2) +  \frac{L}{4\alpha}\|\vw_{k}-\vz_k\|^2,\label{eq:no-eg2}
\end{eqnarray}
where $(a)$ is by the Cauchy–Schwarz inequality, $(b)$ is the Lipschitz continuous Assumption \ref{ass:lip}, $(c)$ is by the fact $ab\le a^2 + \frac{b^2}{4},$ $(d)$ is by the triangle inequality of norm $\|\cdot\|$ and $(e)$ is by the fact $(a+b)^2 \le 2(a^2+b^2).$

Then by the optimality condition of $\vw_k$ in the $k$-th iteration of Algorithm \ref{alg:ode-refor}, we have: $\forall \vz\in\gW,$
\begin{eqnarray}
\Big\langle F(\vw_{k-1}) + \frac{L}{{\alpha\gamma}}\nabla_{\vw_k} \frac{1}{2}\|\vw_{k}- \vz_{k-1}\|^2, \vw_{k}-\vz\Big\rangle\le 0. \label{eq:no-eg3}
\end{eqnarray}
By combining \eqref{eq:no-eg1}, \eqref{eq:no-eg2} and \eqref{eq:no-eg3}, we have
\begin{eqnarray}
E_{1k}&\le& a_k\Big( -\frac{L}{2\alpha\gamma}(1-4\alpha^2\gamma)\|\vw_k-\vz_{k-1}\|^2 + 2L\alpha \|\vw_{k-1}-\vz_{k-1}\|^2 - \frac{L}{4\alpha}\|\vw_k-\vz_k\|^2 \Big)\nonumber\\
&\overset{(a)}{\le}& a_k\Big(-\frac{L}{8\alpha\gamma}\|\vw_k-\vz_{k-1}\|^2 + \frac{L}{8\alpha}\frac{a_{k-1}}{a_k} \|\vw_{k-1}-\vz_{k-1}\|^2 - \frac{L}{4\alpha}\|\vw_k-\vz_k\|^2 \Big)\nonumber\\
&\le& -\frac{La_k}{8\alpha\gamma}\|\vw_k-\vz_{k-1}\|^2+ \frac{L}{8\alpha}(a_{k-1}\|\vw_{k-1}-\vz_{k-1}\|^2 -2a_k \|\vw_{k}-\vz_{k}\|^2)\nonumber\\
&\overset{(b)}{\le}& -\frac{La_k}{8\alpha}\|\vw_k-\vz_{k-1}\|^2+ \frac{L}{8\alpha}(a_{k-1}\|\vw_{k-1}-\vz_{k-1}\|^2 -2a_k \|\vw_{k}-\vz_{k}\|^2),\label{eq:E1k-1}
\end{eqnarray}
where $(a)$ is by the fact 
\begin{eqnarray}
a_k = \frac{\alpha\gamma}{L}\Big(1+\frac{\alpha\gamma\sigma}{L}\Big)^{K-1}
\end{eqnarray}
from Step 5 of Algorithm \ref{alg:ode-refor} and 
the setting 
$$\alpha\le \min\left\{\frac{1}{4\sqrt{2}}, \frac{\sqrt{3}}{4\sqrt{\gamma}}\right\}\le \frac{1}{4}\sqrt{\frac{L}{L+\alpha\gamma \sigma}} = \frac{1}{4}\sqrt{\frac{a_{k-1}}{a_k}},
$$ 
and $(b)$ is by the setting that $0<\gamma\le 1.$

With the $\vw_0=\vz_0$, for convenience, we set $a_0 := a_1.$
By summing \eqref{eq:E1k-1} from $k=1$ to $K$, we have
\begin{eqnarray}
\sum_{k=1}^K E_{1k}
&\le& -\frac{L}{8\alpha}\Big(\sum_{k=1}^K  a_k\big( \|\vw_k\!-\!\vz_{k-1}\|^2 \!+\! \|\vw_k\!-\!\vz_{k}\|^2\big) \!-\! a_0\|\vw_0 \!-\!\vz_0\|^2 \!+\! a_K\|\vw_K\!-\!\vz_K\|^2\Big)\nonumber \\
&\overset{(a)}{=}&-\frac{L}{8\alpha}\Big(\sum_{k=1}^K  \big(a_k \|\vw_k\!-\!\vz_{k-1}\|^2 \!+\! a_{k-1}\|\vw_{k-1}\!-\!\vz_{k-1}\|^2\big) +2a_K\|\vw_{K}\!-\!\vz_{K}\|^2 \Big) \nonumber\\
&\overset{(b)}{\le}& -\frac{L}{8\alpha}\sum_{k=1}^K a_{k-1}  \big(\|\vw_k-\vz_{k-1}\|^2 + \|\vw_{k-1}-\vz_{k-1}\|^2\big),
\end{eqnarray}
where $(a)$ is by the fact $\vw_0 =\vz_0$, and $(b)$ is by the fact that $a_k\ge a_{k-1}>0$. 
Lemma \ref{lem:ode-E} is proved. 
\end{proof}

\subsection{Proof of Lemma \ref{lem:ode-strong-dist}}\label{sec:lem:ode-strong-dist}
\begin{proof}
It follows that 
\begin{eqnarray}
&&\langle F(\vw_k), \vw_k\!-\!\vw\rangle   \nonumber\\
& \!=\! & \langle F(\vw_k) - \big( F(\vw_{k-1}) + \frac{L}{\alpha\gamma}\nabla_{\vw_k} \frac{1}{2}\|\vw_{k}- \vz_{k-1}\|^2\big) , \vw_k-\vw\rangle
\nonumber\\
& &+\Big\langle F(\vw_{k-1}) + \frac{L}{\alpha\gamma}\nabla_{\vw_k} \frac{1}{2}\|\vw_{k}- \vz_{k-1}\|^2,  \vw_k-\vw\Big\rangle\nonumber\\
&\overset{(a)}{\le}\!&  \langle F(\vw_k) - \big( F(\vw_{k-1}) + \frac{L}{\alpha\gamma}\nabla_{\vw_k} \frac{1}{2}\|\vw_{k}- \vz_{k-1}\|^2\big) , \vw_k-\vw\rangle \nonumber\\
&\overset{(b)}{\le}&\|F(\vw_k) \!-\! F(\vw_{k-1})\|_*\|\vw_k\!-\!\vw\| +   \frac{L}{\alpha\gamma} \big\| \nabla_{\vw_k} \frac{1}{2}\|\vw_k\!-\!\vz_{k-1}\|^2 \big\|_*  \|\vw_k\!-\!\vw\|\nonumber\\
&\overset{(c)}{\le}& L\|\vw_k-\vw_{k-1}\| \|\vw_k-\vw\| +  \frac{L}{\alpha\gamma}\delta\|\vw_k-\vz_{k-1}\| \|\vw_k-\vw\|\nonumber\\
&\overset{(d)}{\le}& L(\|\vw_k\!-\!\vz_{k-1}\| \!+\! \|\vz_{k-1} - \vw_{k-1}\|) \|\vw_k\!-\!\vw\| \!+\!  \frac{L}{\alpha\gamma}\delta\|\vw_k\!-\!\vz_{k-1}\| \|\vw_k\!-\!\vw\|\nonumber\\
&\le&\Big( \big(1+ \frac{\delta}{\alpha\gamma}\big) (\|\vw_k - \vz_{k-1}\| + \|\vz_{k-1} - \vw_{k-1}\|)\Big)L \|\vw_k-\vw\|,
\label{eq:es-41}
\end{eqnarray}
where $(a)$ is by the optimality condition of $\vw_k$, $(b)$ is by the Cauchy-Schwarz inequality, $(c)$ is by the Lipschitz continuity of $F(\w)$ and the bounded assumption \eqref{eq:grad-norm}, $(d)$ is by the triangle inequality of norm $\|\cdot\|.$
So we have
\begin{eqnarray}
\sup_{\vw\in\gW,\|\vw_k-\vw\|\le D
}\langle F(\vw_k), \vw_k-\vw\rangle\le\big(1+ \frac{\delta}{\alpha\gamma}\big)DL(\|\vw_k - \vz_{k-1}\| + \|\vz_{k-1} - \vw_{k-1}\|).\label{eq:es-4}
\end{eqnarray}

Meanwhile, if there exists a $\vw^*$ that satisfies Assumption \ref{ass:strong-weak}, $i.e.,$ $\forall \vw\in\gW$, $\langle F(\vw) , \vw-\vw^*\rangle \ge \frac{\sigma}{\gamma}(V_{\vw-\vw_0}(\vw^*-\vw_0) +  V_{\vw^*-\vw_0}(\vw-\vw_0))$ with $\sigma>0,$ then in \eqref{eq:es-41}, let $\vw:=\vw^*,$ and by the fact $V_{\vw_k-\vw_0}(\vw^*-\vw_0)\ge \frac{\gamma}{2}\|\vw_k-\vw^*\|^2$ and  $V_{\vw^*-\vw_0}(\vw_k-\vw_0)\ge \frac{\gamma}{2}\|\vw_k-\vw^*\|^2$,
we have 
\begin{eqnarray}
\sigma\|\vw_k-\vw^*\|^2&\le& \frac{\sigma}{\gamma}(V_{\vw_k-\vw_0}(\vw^*-\vw_0) +  V_{\vw^*-\vw_0}(\vw_k-\vw_0)) \le \langle F(\vw_k), \vw_k-\vw^*\rangle \nonumber\\
&\le& \big(1+ \frac{\delta}{\alpha\gamma}\big)L 
(\|\vw_k - \vz_{k-1}\| + \|\vz_{k-1} - \vw_{k-1}\|) \|\vw_k-\vw^*\|. 
\end{eqnarray}
So it follows that
\begin{eqnarray}
\|\vw_k-\vw^*\|\le \big(1+ \frac{\delta}{\alpha\gamma}\big) \frac{L}{\sigma}
(\|\vw_k - \vz_{k-1}\| + \|\vz_{k-1} - \vw_{k-1}\|).\label{eq:es-43}
\end{eqnarray}
Lemma \ref{lem:ode-strong-dist} is proved.
\end{proof}

\subsection{Proof of Theorem \ref{thm:ode}}\label{sec:thm:ode}
\begin{proof}
Firstly, by the setting $a_k = \frac{{\alpha\gamma}(1+\sigma A_{k-1})}{L}$ and $A_0=0, A_k = A_{k-1} + a_k,$ we have: $\forall k \ge 0,$
\begin{itemize}
\item If $\sigma = 0,$ then $A_k = \frac{\alpha\gamma k}{L}.$
\item If $\sigma >0$, then  $A_k = \frac{1}{\sigma}\Big(1+\frac{{\alpha\gamma}\sigma}{L}\Big)^k-\frac{1}{\sigma}$.
\end{itemize}

By Lemmas \ref{lem:ode-es} and \ref{lem:ode-E}, we have 
\begin{align}
&\sum_{k=1}^{K}a_k\left(\langle F(\vw_k), \vw_k-\vu\rangle - \frac{\sigma}{\gamma} V_{\vw_k-\vw_0}(\vu-\vw_0)\right)\nonumber\\
\le&\sum_{k=1}^{K}E_{1k} + \frac{1}{2\gamma}\|\vu -\vw_0\|^2  \nonumber\\
\le& -\frac{L}{8\alpha}\sum_{k=1}^K a_{k-1}\big(\|\vw_k-\vz_{k-1}\|^2 + \|\vw_{k-1}-\vz_{k-1}\|^2\big)+ \frac{1}{2\gamma}\|\vu-\vw_0\|^2.\label{eq:es-3}
\end{align}

Let $\vw$ be the $\vw^*$ in Assumption \ref{ass:weak} if $\sigma=0$ or the $\vw^*$ in Assumption  \ref{ass:strong-weak} if $\sigma>0$.  Then by the property of $\vw^*$, we have $\langle F(\vw_k), \vw_k-\vw^*\rangle - \frac{\sigma}{\gamma} V_{\vw_k-\vw_0}(\vw^*-\vw_0)\ge 0.$ So by \eqref{eq:es-3}, it follows that
\begin{eqnarray}
&&\frac{L}{16\alpha}\sum_{k=1}^K a_{k-1}\big(\|\vw_k-\vz_{k-1}\| + \|\vw_{k-1}-\vz_{k-1}\|\big)^2\nonumber\\
&\le&\frac{L}{8\alpha}\sum_{k=1}^K a_{k-1}\big(\|\vw_k-\vz_{k-1}\|^2 + \|\vw_{k-1}-\vz_{k-1}\|^2\big)\le \frac{1}{2\gamma}\|\vw^*-\vw_0\|^2. \label{eq:es-5}
\end{eqnarray}

By the setting  $A_k = A_{k-1} + a_k $ with $A_0=0$ in Algorithm \ref{alg:ode-refor}, we have $A_k = \sum_{i=1}^k a_i.$ Meanwhile, for convenience, we have set $a_0 = a_1$. So  we have
\begin{eqnarray} 
&&\frac{L}{16\alpha}(A_{K-1}+a_1)\min_{k\in[K]}(\|\vw_k-\vz_{k-1}\|  + \|\vw_{k-1}-\vz_{k-1}\|)^2  \le\frac{1}{2\gamma}\|\vw^*-\vw_0\|^2. \label{eq:es-55}
\end{eqnarray}

So for the so computed $\{\vw_k, \vz_{k-1}\}$, let $\tilde{k} := \argmin_{k\in [K]} (\|\vw_k-\vz_{k-1}\| + \|\vw_{k-1}-\vz_{k-1}\|) $ and  $\tilde{\vw}_K := \vw_{\tilde{k}}.$ Then combining \eqref{eq:es-5} and \eqref{eq:es-55}, we have 
\begin{eqnarray}
\|\vw_{\tilde{k}} - \vz_{{\tilde{k}}-1}\| + \|\vw_{\tilde{k}-1} - \vz_{{\tilde{k}}-1}\|    \le \sqrt{{\frac{\|\vw^*-\vw_0\|^2}{L(A_{K-1}+a_1)}}\frac{{8\alpha}}{\gamma}}. \label{eq:ddl-1}
\end{eqnarray}
So by \eqref{eq:strong-deter} of Lemma \ref{lem:ode-strong-dist} and \eqref{eq:ddl-1}, it follows that
\begin{eqnarray}
&&\sup_{\vw\in\gW,\|\tilde{\vw}_K-\vw\|
\le D
}\langle F(\tilde{\vw}_K), \tilde{\vw}_K-\vw\rangle\nonumber\\
&\le & \big(1+ \frac{\delta}{{\alpha\gamma}}\big)DL( \|\vw_{\tilde{k}} - \vz_{{\tilde{k}}-1}\|+ \|\vw_{\tilde{k}-1} - \vz_{{\tilde{k}}-1}\| )\nonumber\\
&\le&\big(1+ \frac{\delta}{{\alpha\gamma}}\big)D
\sqrt{{\frac{L\|\vw^*-\vw_0\|^2}{(A_{K-1}+a_1)}}\frac{{8\alpha}}{\gamma}}.
\end{eqnarray}
Similarly, if $\sigma>0,$ then  by \eqref{eq:distance-deter} of Lemma \ref{lem:ode-strong-dist} and \eqref{eq:ddl-1}, we have
\begin{eqnarray}
\|\tilde{\vw}_K-\vw^*\|&\le& \big(1+ \frac{\delta}{{\alpha\gamma}}\big) \frac{L}{\sigma}
( \|\vw_{\tilde{k}} - \vz_{{\tilde{k}}-1}\|+ \|\vw_{\tilde{k}-1} - \vz_{{\tilde{k}}-1}\| )\nonumber\\
&\le& \Big(1 + \frac{\delta}{{\alpha\gamma}}\Big)\frac{1}{\sigma}
\sqrt{{\frac{L\|\vw^*-\vw_0\|^2}{(A_{K-1}+a_1)}}\frac{{8\alpha}}{\gamma}}.\label{eq:ddl-111}
\end{eqnarray}

Then by defining $C_0:= \Big(1 + \frac{\delta}{{\alpha\gamma}}\Big)\sqrt{\frac{8\alpha}{\gamma}},$ Theorem \ref{thm:ode} is proved.

\end{proof}

\subsection{Proof of Proposition \ref{prop:ode}}\label{sec:prop:ode}
\begin{proof}
The proof follows the same paradigm of Section \ref{sec:thm:ode}.  Firstly, by the setting $a_k = \frac{{\alpha\gamma}(1+\sigma A_{k-1})}{L}$ and $A_0=0, A_k = A_{k-1} + a_k,$ we have $\forall k \ge 0,$ 
\begin{eqnarray}
a_k  = \frac{\alpha\gamma}{L}\Big( 1 + \frac{\alpha\gamma\sigma}{L} \Big)^{k-1}. 
\end{eqnarray}

Then the \eqref{eq:es-55} of Section \ref{sec:thm:ode} is replaced by 
\begin{eqnarray}
&&\frac{L}{16\alpha}a_{K-1}(\|\vw_K-\vz_{K-1}\|  + \|\vw_{K-1}-\vz_{K-1}\|)^2  \le\frac{1}{2\gamma}\|\vw^*-\vw_0\|^2.
\end{eqnarray}

Then similar to \eqref{eq:ddl-1} to \eqref{eq:ddl-111}, we obtain the last iterate convergence result as 
\begin{eqnarray}
\sup_{\vw\in\gW,\|\tilde{\vw}_K-\vw\|
\le D
}\langle F({\vw}_K), {\vw}_K-\vw\rangle
&\le&\big(1+ \frac{\delta}{{\alpha\gamma}}\big)D
\sqrt{{\frac{L\|\vw^*-\vw_0\|^2}{a_{K-1}}}\frac{{8\alpha}}{\gamma}},\nonumber\\
\|{\vw}_K-\vw^*\|
&\le& \Big(1 + \frac{\delta}{{\alpha\gamma}}\Big)\frac{1}{\sigma}
\sqrt{{\frac{L\|\vw^*-\vw_0\|^2}{a_{K-1}}}\frac{{8\alpha}}{\gamma}}.\nonumber
\end{eqnarray}
Thus by the definition of $C_0$ in Theorem \ref{thm:ode}, Proposition \ref{prop:ode} is proved. 

\end{proof}

\subsection{Proof of Lemma \ref{lem:monotone-strong-weak}}\label{sec:lem:monotone-strong-weak}
\begin{proof}
By the definition of the Bregman divergence  $V_{\vw}(\vv), $ we have
\begin{eqnarray}
V_{\vv-\vw_0}(\vw-\vw_0) &=& \frac{1}{2}\|\vw-\vw_0\|^2 -\frac{1}{2}\|\vv-\vw_0\|^2 - \big\langle\nabla_{\vv}\frac{1}{2}\|\vv-\vw_0\|^2, \vw-\vv\big\rangle \label{eq:breg-1}\\
V_{\vw-\vw_0}(\vv-\vw_0) &=&  \frac{1}{2}\|\vv-\vw_0\|^2 -\frac{1}{2}\|\vw-\vw_0\|^2 -  \big\langle\nabla_{\vw}\frac{1}{2}\|\vw-\vw_0\|^2, \vv-\vw\big\rangle \label{eq:breg-2}.
\end{eqnarray}
So combining \eqref{eq:breg-1} and \eqref{eq:breg-2}, it follows that 
\begin{eqnarray}
\Big\langle \nabla_{\vw} \frac{1}{2}\|\vw-\vw_0\|^2 - \nabla_{\vv} \frac{1}{2}\|\vv-\vw_0\|^2, \vw-\vv\Big\rangle  = V_{\vv-\vw_0}(\vw-\vw_0) + V_{\vw-\vw_0}(\vv-\vw_0). 
\end{eqnarray}

So if $F(\vw)$ is monotone, then  we have: $\forall \vw_0, \vw, \vv \in\gW,$
\begin{eqnarray}
&&\Big\langle \Big(F(\vw) + \epsilon\nabla_{\vw}\frac{1}{2\gamma}\|\vw-\vw_0\|^2\Big) - 
\Big(F(\vv) + \epsilon\nabla_{\vv}\frac{1}{2\gamma}\|\vv-\vw_0\|^2\Big), \vw-\vv\Big\rangle \nonumber\\
&\ge& \frac{\epsilon}{\gamma}(V_{\vv-\vw_0}(\vw-\vw_0) + V_{\vw-\vw_0}(\vv-\vw_0)). \label{eq:regular-strong}
\end{eqnarray}

As Assumption \ref{ass:strong-weak} includes the strongly monotone assumption, by \eqref{eq:regular-strong}, we know that the VIP$(F+\epsilon\nabla \frac{1}{2\gamma}\|\cdot -\vw_0\|^2,\gW)$ satisfies Assumption \ref{ass:strong-weak} with parameter $ \sigma = \epsilon.$ 

Lemma \ref{lem:monotone-strong-weak} is proved. 
\end{proof}

\subsection{Proof of Corollary \ref{coro:best}}\label{sec:coro:best}
\begin{proof}
By Theorem \ref{thm:ode} and Lemma \ref{lem:monotone-strong-weak}, if we optimize the regularized problem VIP$(F+\epsilon\nabla \frac{1}{2\gamma}\|\cdot - \vw_0\|^2,\gW)$ by the ODE Algorithm \ref{alg:ode-refor}, then after $K$ iterations, we have
\begin{align}
\sup_{\vw\in\gW,\|\tilde{\vw}_K-\vw\|\le D
}&\langle F(\tilde{\vw}_K) + \epsilon\nabla_{\tilde{\vw}_K} \frac{1}{2\gamma}\|\tilde{\vw}_K-\vw_0\|^2, \tilde{\vw}_K-\vw\rangle\nonumber\\
\le& C_0D \|\vw_0 -\vw^*\|\sqrt{ \frac{L}{A_{K-1}+a_1}},\label{eq:ode-strong-coro-v2}
\end{align}
where $C_0$ is defined in Theorem \ref{thm:ode}, $A_{K-1} = \frac{1}{\epsilon}\Big(1+\frac{\sqrt{\alpha\gamma}\epsilon}{L}\Big)^{K-1}-\frac{1}{\epsilon}.$ 

Meanwhile, by the convexity of $\frac{1}{2\gamma}\|{\vw}-\vw_0\|^2,$ we have
\begin{eqnarray}
\langle \nabla_{\tilde{\vw}_K} \frac{1}{2\gamma}\|\tilde{\vw}_K-\vw_0\|^2, \vw - \tilde{\vw}_K\rangle  \le \frac{1}{2\gamma}\|{\vw}-\vw_0\|^2 - \frac{1}{2\gamma}\|\tilde{\vw}_K-\vw_0\|^2\le \frac{1}{2\gamma}\|{\vw}-\vw_0\|^2.\label{eq:imp2-v2}
\end{eqnarray}

So combining \eqref{eq:ode-strong-coro-v2} and \eqref{eq:imp2-v2}, we have
\begin{align}
 & \sup_{\vw\in\gW,\|\tilde{\vw}_K-\vw\|\le D,  \|\vw-\vw_0\|\le D
}\langle F(\tilde{\vw}_K), \tilde{\vw}_K-\vw\rangle \nonumber\\
\le&  D\epsilon +  D C_0 \|\vw_0 -\vw^*\|\sqrt{ \frac{L\epsilon}{\Big(1+ \frac{\alpha\gamma\epsilon}{L}\Big)^{K-1} - 1 + \frac{\alpha\gamma}{L}}}.
\end{align}

Corollary \ref{coro:best} is proved. 
\end{proof}

\subsection{Proof of Corollary \ref{coro:last-iterate}}\label{sec:coro:last-iterate}

\begin{proof}
By Proposition \ref{prop:ode} and Lemma \ref{lem:monotone-strong-weak}, if we optimize the regularized problem VIP$(F+\epsilon\nabla \frac{1}{2\gamma}\|\cdot -\vw_0\|^2,\gW)$ by the OptDE Algorithm \ref{alg:ode-refor}, then after $K$ iterations, we have
\begin{align}
\sup_{\vw\in\gW,\|{\vw}_K-\vw\|\le D
}&\langle F({\vw}_K) + \epsilon\nabla_{{\vw}_K} \frac{1}{2\gamma}\|{\vw}_K-\vw_0\|^2, {\vw}_K-\vw\rangle\nonumber\\
\le& C_0D \|\vw_0 -\vw^*\|\sqrt{ \frac{L}{a_{K-1}}},\label{eq:ode-strong-coro}
\end{align}
where $C_0$ is defined in Theorem \ref{thm:ode}, $a_{K-1} =   \frac{\alpha\gamma}{L}\Big(1+ \frac{\alpha\gamma\sigma}{L}\Big)^{K-2}.$ 

Meanwhile, by the convexity of $\frac{1}{2\gamma}\|{\vw}-\vw_0\|^2,$ we have
\begin{eqnarray}
\langle \nabla_{{\vw}_K} \frac{1}{2\gamma}\|{\vw}_K-\vw_0\|^2, \vw - {\vw}_K\rangle  \le \frac{1}{2\gamma}\|{\vw}-\vw_0\|^2 - \frac{1}{2\gamma}\|{\vw}_K-\vw_0\|^2\le \frac{1}{2\gamma}\|{\vw}-\vw_0\|^2.  \label{eq:imp2}
\end{eqnarray}

So combining \eqref{eq:ode-strong-coro} and \eqref{eq:imp2}, we have
\begin{align}
&\sup_{\vw\in\gW,\|{\vw}_K-\vw\|\le D,\|\vw-\vw_0\|\le D
}\langle F({\vw}_K), {\vw}_K-\vw\rangle \nonumber\\
&\le D\epsilon +  DC_0 L \|\vw_0 -\vw^*\|\sqrt{ \frac{1}{{\alpha\gamma}\Big(1+ \frac{\alpha\gamma\epsilon}{L}\Big)^{K-2} }}. \nonumber
\end{align}
Corollary \ref{coro:last-iterate} is proved. 
\end{proof}

\section{Proof of Section \ref{sec:sto}}

By the definition of proximal operator \eqref{eq:proximal-mapping}, we can equivalently reformulate the stochastic optimistic dual extrapolation (SODE) of the main body as below. Then based on the definition of $\vg_k$ in Step 7 and the definition of the Bregman divergence $V_{\vw}(\vu)$, we can verify that 
\begin{align}
\vz_k = \argmin_{\vz\in\gW}\left\{\hat{\psi}_k(\vz):=\sum_{i=1}^k a_i \left(\langle F(\vw_i; \xi_i), \vz\!-\!\vu\rangle +\frac{\sigma}{\gamma}V_{\vw_i-\vw_0}(\vz\!-\!\vw_0)\right)+ \frac{1}{2\gamma}\|\vz \!-\! \vw_0\|^2\right\}, \label{eq:sode-es}
\end{align}
where $\vu$ is an arbitrary vector in $\gW$ and is irrelevant to  the minimizer $\vz_k.$  In our context, $\hat{\psi}_k(\vz)$ plays the role of a  ``generalized estimation sequence'' to help us conduct convergence analysis.  By the $\gamma$-strong convexity of the Bregman divergence $V_{\vw_i-\vw_0}(\vz-\vw_0)$, we know that $\hat{\psi}_k(\vz)$ is strongly convex with strong convexity parameter $1+\sigma \sum_{i=1}^k a_i = 1+\sigma A_k.$  

\begin{algorithm}[t!]
\caption{Stochastic Optimistic Dual Extrapolation \textbf{(Reformulation)}}\label{alg:sode-refor}
\begin{algorithmic}[1]
\STATE \textbf{Input: } Lipshitz constant $L >0$ from Assumption \ref{ass:lip}, $\gamma, \delta>0$ from Assumption \ref{ass:non-Euclidean}. The VIP$(F,\gW)$ satisfying Assumption \ref{ass:weak} ($\sigma = 0$) or  Assumption \ref{ass:strong-weak} ($\sigma > 0$). 
\STATE $A_0 = 0, \alpha = \min\{\frac{\gamma}{32}, \frac{1}{16}\}.$
\STATE $\vw_0=\vz_0\in\gW,  \vg_0 = \vzero. $ 
\vspace{0.015in}
\FOR{ $k = 1,2,3,\ldots, K$}
\STATE $a_k = \frac{\alpha\gamma\sqrt{1+\sigma A_{k-1}}}{L}, A_k = A_{k-1} + a_{k}$.
\STATE $\vw_{k} = \argmin_{\vw\in\gW}\Big\{\langle F(\vw_{k-1};\xi_{k-1}), \vw\rangle + \frac{L^2 a_k}{2(\alpha\gamma)^2}\|\vw - \vz_{k-1}\|^2\Big\}.$
\STATE $\vg_k = \vg_{k-1} + a_k\big(F(\vw_k;\xi_{k}) - \frac{\sigma}{\gamma} \nabla_{\vw_k}\frac{1}{2}\|\vw_k-\vw_0\|^2\big).$
\STATE $\vz_k = \argmin_{\vz\in\gW}\Big\{\langle \vg_k, \vz\rangle + \frac{1+\sigma A_k}{2\gamma}\|\vz - \vw_0\|^2\Big\} $

\ENDFOR
\STATE $\tilde{\vw}_K = \vw_k$, where $k$ is chosen at random with probability distribution $\{\frac{a_1}{A_K}, \frac{a_2}{A_K}, \ldots, \frac{a_K}{A_K}\}.$   
\STATE \textbf{return} $\tilde{\vw}_K.$
\end{algorithmic}	
\end{algorithm}

\subsection{Proof of Lemma \ref{lem:sto:es}}\label{sec:lem:sto:es}
\begin{proof}
Given the definition of the generalized estimation sequence $\hat{\psi}_k(\vz)$ in \eqref{eq:sode-es} and by the optimality condition of the minimizer $\vz_k$ in the Step 6 of Algorithm \ref{alg:sode-refor}, we have: $\forall \vu\in\gW,$
\begin{eqnarray}
\Big\langle \sum_{i=1}^k a_i (F(\vw_i;\xi_i) +\frac{\sigma}{\gamma}\nabla V_{\vw_i-\vw_0}(\vz_k-\vw_0))+ \nabla_{\vz_k} \frac{1}{2\gamma}\|\vz_k - \vw_0\|^2, \vu - \vz_k\Big\rangle \ge 0.\label{eq:sto-opt-cond}
\end{eqnarray}
Then we have: $\forall k\in[K],$
\begin{eqnarray}
\hat{\psi}_k(\vz_k)& = &\sum_{i=1}^k a_i\left(\langle F(\vw_i;\xi_i), \vz_k-\vu\rangle +\frac{\sigma}{\gamma}V_{\vw_i-\vw_0}(\vz_k-\vw_0)\right)  + \frac{1}{2\gamma}\|\vz_k - \vw_0\|^2\nonumber\\
&\overset{(a)}{\le}&\frac{\sigma}{\gamma}\sum_{i=1}^k a_i\left(\langle  \nabla V_{\vw_i-\vw_0}(\vz_k-\vw_0), \vu-\vz_k\rangle + V_{\vw_i-\vw_0}(\vz_k-\vw_0)\right)\nonumber\\
&&+\Big\langle\nabla_{\vz_k} \frac{1}{2\gamma}\|\vz_k - \vw_0\|^2, \vu-\vz_k\Big\rangle + \frac{1}{2\gamma}\|\vz_k - \vw_0\|^2\nonumber\\
&\overset{(b)}{\le}&\frac{\sigma}{\gamma}\sum_{i=1}^k a_i V_{\vw_i-\vw_0}(\vu-\vw_0) + \frac{1}{2\gamma}\|\vu - \vw_0\|^2,\label{eq:hat-phi-bound}
\end{eqnarray}
where $(a)$ is by the optimality condition \eqref{eq:sto-opt-cond} and $(b)$ is by the 
convexity of $V_{\vw_i-\vw_0}(\vu-\vw_0)$ and $\frac{1}{2\gamma}\|\vu - \vw_0\|^2$.

Meanwhile $\forall k\in [K]$, we have 
\begin{align}
\hat{\psi}_{k} (\vz_{k})& = \hat{\psi}_{k-1} (\vz_{k}) + a_k\langle F(\vw_{k};\xi_k), \vz_{k} - \vu\rangle\nonumber\\
&\overset{(a)}{\ge} \hat{\psi}_{k-1} (\vz_{k-1}) +  \frac{1 + \sigma A_{k-1}}{2}\|\vz_{k} - \vz_{k-1}\|^2 + a_k\langle F(\vw_{k};\xi_k), \vz_{k} - \vu\rangle \nonumber\\
&= \hat{\psi}_{k-1} (\vz_{k-1}) +  \frac{1 + \sigma A_{k-1}}{2}\|\vz_{k} - \vz_{k-1}\|^2 \nonumber\\
&\quad\quad+ a_k\langle F(\vw_{k};\xi_k), \vz_{k} - \vw_k\rangle + a_k\langle F(
\vw_{k};\xi_k), \vw_k - \vu\rangle, \label{eq:sto:dis-1}
\end{align}
where $(a)$ is the $(1+\sigma A_{k-1})$-strong convexity of $\hat{\psi}_{k-1}(\vz).$ Meanwhile, by the $\gamma$-strong convexity of $\frac{1}{2}\|\cdot\|^2$, we have
\begin{align}
&a_k \left\langle F(\vw_{k};\xi_k), \vw_{k}-\vz_k\right\rangle - \frac{1+\sigma A_{k-1}}{2}\|\vz_{k}-\vz_{k-1}\|^2\nonumber\\
\le& \Big\langle a_k F(\vw_{k};\xi_k) + (1+\sigma A_{k-1})\nabla_{\vw_k} \frac{1}{2}\|\vw_k- \vz_{k-1}\|^2, \vw_{k}-\vz_k\Big\rangle\nonumber\\
&\quad\quad- (1+\sigma A_{k-1}) \Big( \frac{1}{2}\|\vw_k-\vz_{k-1}\|^2 + \frac{\gamma}{2}\|\vw_k-\vz_k\|^2 \Big).\label{eq:sto:dis-11}
\end{align}
Then combining \eqref{eq:sto:dis-1} and \eqref{eq:sto:dis-11}, we have 
\begin{eqnarray}
&&a_k\langle F(
\vw_{k};\xi_k), \vw_k - \vu\rangle\nonumber\\
&\le &
 \Big\langle a_k F(\vw_{k};\xi_k) 
 + (1+\sigma A_{k-1})\nabla_{\vw_k} \frac{1}{2}\|\vw_k- \vz_{k-1}\|^2, \vw_{k}-\vz_k\Big\rangle\nonumber\\
 &&- (1+\sigma A_{k-1}) \Big( \frac{1}{2}\|\vw_k-\vz_{k-1}\|^2 + \frac{\gamma}{2}\|\vw_k-\vz_k\|^2 \Big)
+\hat{\psi}_{k} (\vz_{k}) - \hat{\psi}_{k-1} (\vz_{k-1}). \label{eq:sto:dis-111}
\end{eqnarray}

Summing \eqref{eq:sto:dis-111} from $k=1$ to $K$, we have 
\begin{eqnarray}
&&\sum_{k=1}^{K}a_k\langle F(\vw_k;\xi_k), \vw_k-\vu\rangle  \nonumber\\
&\le& \sum_{k=1}^{K}\bigg( \Big\langle a_k F(\vw_{k};\xi_k) + (1+\sigma A_{k-1})\nabla_{\vw_k} \frac{1}{2}\|\vw_k- \vz_{k-1}\|^2, \vw_{k}-\vz_k\Big\rangle \nonumber\\
&&-(1+\sigma A_{k-1})\Big( \frac{1}{2}\|\vw_k-\vz_{k-1}\|^2 + \frac{\gamma}{2}\|\vw_k-\vz_k\|^2 \Big)\bigg)+\hat{\psi}_{K} (\vz_{K})-\hat{\psi}_{0} (\vz_{0}) \nonumber\\
&\overset{(a)}{\le}& \sum_{k=1}^{K}\Big( \Big\langle a_k F(\vw_{k};\xi_k) +(1+\sigma A_{k-1}) \nabla_{\vw_k} \frac{1}{2}\|\vw_k- \vz_{k-1}\|^2, \vw_{k}-\vz_k\Big\rangle\nonumber\\
&&-(1+\sigma A_{k-1})\Big(\frac{1}{2}\|\vw_k-\vz_{k-1}\|^2 + \frac{\gamma}{2}\|\vw_k-\vz_k\|^2 \Big)\Big) \nonumber\\
&&+
\frac{\sigma}{\gamma}\sum_{k=1}^K a_k V_{\vw_k-\vw_0}(\vu-\vw_0) +\frac{1}{2\gamma}\|\vu-\vw_0\|^2\nonumber\\
&\overset{(b)}{=}& \sum_{k=1}^{K}a_k\Big( \Big\langle  F(\vw_{k};\xi_k) +\frac{L^2 a_k}{{(\alpha\gamma)^2}} \nabla_{\vw_k} \frac{1}{2}\|\vw_k- \vz_{k-1}\|^2, \vw_{k}-\vz_k\Big\rangle\nonumber\\
&&\quad\quad\quad-\frac{L^2 a_k}{{(\alpha\gamma)^2}}\Big(\frac{1}{2}\|\vw_k-\vz_{k-1}\|^2 + \frac{\gamma}{2}\|\vw_k-\vz_k\|^2 \Big)\Big)\nonumber\\
&&+
\frac{\sigma}{\gamma}\sum_{k=1}^K a_k V_{\vw_k-\vw_0}(\vu-\vw_0) +\frac{1}{2\gamma}\|\vu-\vw_0\|^2,\label{eq:sto-F-bound}
\end{eqnarray}
where $(a)$ is by the fact $\hat{\psi}_0(\vz_0) = 0$, the upper bound of $\hat{\psi}_K(\vz_K)$ in \eqref{eq:hat-phi-bound}, $(b)$ is by the setting $a_k^2 = \frac{{(\alpha\gamma)^2}(1+\sigma A_{k-1})}{L^2}$ in Algorithm \ref{alg:sode-refor}. Meanwhile, taking expectation on $\xi_k$, we have: $\forall \vu\in\gW,$
\begin{eqnarray}
\langle F(\vw_k), \vw_k-\vu\rangle& =&    
\E_{\xi_k}\Big[\langle F(\vw_k)-F(\vw_k;\xi_k), \vw_k-\vu\rangle \Big] + \E_{\xi_k}\Big[\langle F(\vw_k;\xi_k), \vw_k-\vu\rangle\Big]  \nonumber\\
&=& \E_{\xi_k}\Big[\langle F(\vw_k;\xi_k), \vw_k-\vu\rangle\Big]. \label{eq:sto-F-bound2}
\end{eqnarray}
So taking expectation on the randomness of all the history for \eqref{eq:sto-F-bound}, and using \eqref{eq:sto-F-bound2} and the definition of $\{E_{2k}\}$ in Lemma  \ref{lem:sto:es}, 
after simple arrangements, Lemma \ref{lem:sto:es} is proved. 
\end{proof}

\subsection{Proof of Lemma \ref{lem:no-E}}\label{sec:lem:no-E}
\begin{proof}
By the definition of $E_{2k}$ in Lemma \ref{lem:sto:es}, we have: $\forall k\in[K],$
\begin{eqnarray}
E_{2k}&=&a_k\Big(\Big\langle  F(\vw_{k};
\xi_k) +\frac{L^2 a_k}{{(\alpha\gamma)^2}}  \nabla_{\vw_k} \frac{1}{2}\|\vw_{k}- \vz_{k-1}\|^2, \vw_{k}-\vz_k\Big\rangle\nonumber\\
&&\quad\quad
- \frac{L^2 a_k}{{(\alpha\gamma)^2}} \Big( \frac{1}{2}\|\vw_{k}-\vz_{k-1}\|^2 + \frac{\gamma}{2}\|\vw_{k}-\vz_k\|^2 \Big)\Big)\nonumber\\
&&+ a_k\langle F(\vw_k)-F(\vw_k;\xi_k), \vw_k-\vu\rangle\nonumber\\
&\le& a_k\Big( \Big\langle F(\vw_{k};
\xi_k) - F(\vw_{k-1};
\xi_{k-1}), \vw_{k}-\vz_k\Big\rangle\nonumber\\
&&\quad\quad+
\Big\langle  F(\vw_{k-1};
\xi_{k-1}) + \frac{L^2 a_k}{{(\alpha\gamma)^2}} \nabla_{\vw_k} \frac{1}{2}\|\vw_{k}- \vz_{k-1}\|^2, \vw_{k}-\vz_k\Big\rangle\nonumber\\
&&\quad\quad-\frac{L^2 a_k}{{(\alpha\gamma)^2}}\Big( \frac{1}{2}\|\vw_{k}-\vz_{k-1}\|^2 + \frac{\gamma}{2}\|\vw_{k}-\vz_k\|^2 \Big)\Big)\nonumber\\
&&+ a_k\langle F(\vw_k)-F(\vw_k;\xi_k), \vw_k-\vu\rangle.\label{eq:sto:no-eg1}
\end{eqnarray}
Meanwhile, we have: for all $\alpha>0,$ 
\begin{eqnarray}
&&\Big\langle F(\vw_{k};
\xi_k) - F(\vw_{k-1};
\xi_{k-1}), \vw_{k}-\vz_k\Big\rangle\nonumber\\
 &\overset{(a)}{\le}& \|F(\vw_{k};
\xi_k) - F(\vw_{k-1};
\xi_{k-1})\|_* \|\vw_{k}-\vz_k\|\nonumber\\
&\overset{(b)}{\le}& (\|F(\vw_k) \!-\! F(\vw_{k-1})\|_* \!+\! \| F(\vw_k) \!-\! F(\vw_k;\xi_k)\|_* \nonumber\\
&&\quad + \| F(\vw_{k-1}) \!-\!  F(\vw_{k-1};\xi_{k-1})\|_*)\|\vw_{k}\!-\!\vz_k\| \nonumber\\
&\overset{(c)}{\le}&(L\|\vw_{k}\!-\!\vw_{k-1}\|\!+\! \| F(\vw_k) \!-\! F(\vw_k;\xi_k)\|_* \!+\! \| F(\vw_{k-1}) \!-\!  F(\vw_{k-1};\xi_{k-1})\|_*)\|\vw_{k}\!-\!\vz_k\| \nonumber\\
&\overset{(d)}{\le}& \frac{ {\alpha}}{L^2 {  } a_k }(L\|\vw_{k}-\vw_{k-1}\|+ \| F(\vw_k) - F(\vw_k;\xi_k)\|_* + \| F(\vw_{k-1}) -  F(\vw_{k-1};\xi_{k-1})\|_*)^2 \nonumber\\
&&+ \frac{L^2 {  }a_k}{4 {\alpha}}\|\vw_{k}-\vz_k\|^2.\nonumber\\
&\overset{(e)}{\le}&\frac{2{\alpha}}{ {  } a_k}\|\vw_{k}-\vw_{k-1}\|^2 + \frac{2 {\alpha}}{L^2{  } a_k } (\| F(\vw_k) - F(\vw_k;\xi_k)\|_* + \| F(\vw_{k-1}) -  F(\vw_{k-1};\xi_{k-1})\|_*)^2\nonumber\\
&&+ \frac{L^2 {  } a_k}{4{\alpha}}\|\vw_{k}-\vz_k\|^2\nonumber\\
&\overset{(f)}{\le}&\!\!\!\! \frac{2{\alpha}}{ {  } a_k }  \|\vw_{k}-\vw_{k-1}\|^2 + \frac{4 {\alpha}}{L^2 {  } a_k } (\| F(\vw_k) - F(\vw_k;\xi_k)\|_*^2 + \| F(\vw_{k-1}) -  F(\vw_{k-1};\xi_k)\|_*^2)\nonumber\\
&&+ \frac{L^2 {  } a_k}{4 {\alpha}}\|\vw_{k}-\vz_k\|^2\nonumber\\
&\overset{(g)}{\le}&\frac{4 {\alpha}}{ {  } a_k}(\|\vw_k-\vz_{k-1}\|^2 + \|\vz_{k-1} - \vw_{k-1}\|^2)\nonumber\\
&&+ \frac{4 {\alpha}}{L^2 {  } a_k } (\| F(\vw_k) \!-\! F(\vw_k;\xi_k)\|_*^2 \!+\! \| F(\vw_{k-1}) \!-\!  F(\vw_{k-1};\xi_k)\|_*^2)\!+\! \frac{L^2 {  } a_k}{4 {\alpha}}\|\vw_{k}\!-\!\vz_k\|^2,
\label{eq:sto:no-eg2}
\end{eqnarray}
where $(a)$ is by the Cauchy-Schwarz inequality, $(b)$ is by the triangle inequality of the norm $\|\cdot\|_*$, $(c)$ is by the Lipschitz continuity of $F(\vw)$, $(d)$ is by the fact $ab\le a^2 + \frac{b^2}{4},$ $(e), (f)$ and $(g)$ is by the fact $(a+b)^2\le 2(a^2+b^2).$

Then by the optimality condition of $\vw_k$ in  Algorithm \ref{alg:sode-refor}, we have: $\forall \vz\in\gW,$ 
\begin{eqnarray}
\Big\langle F(\vw_{k-1};\xi_{k-1}) + \frac{L^2 a_k}{ {(\alpha\gamma)^2}} \nabla_{\vw_k} \frac{1}{2}\|\vw_{k}- \vz_{k-1}\|^2, \vw_{k}-\vz\Big\rangle\le 0. \label{eq:sto:no-eg3}
\end{eqnarray}

Combining \eqref{eq:sto:no-eg1}, \eqref{eq:sto:no-eg2} and \eqref{eq:sto:no-eg3} with $\vz:= \vz_k $, we have
\begin{eqnarray}
E_{2k}&\le&
-\Big(\frac{L^2a_k^2}{2{(\alpha\gamma)^2}} - 4{\alpha}\Big)
\|\vw_{k}- \vz_{k-1}\|^2 \nonumber \\
&&+ \frac{4{\alpha}}{L^2} (\| F(\vw_k) - F(\vw_k;\xi_k)\|_*^2 + \| F(\vw_{k-1}) -  F(\vw_{k-1};\xi_{k-1})\|_*^2)\nonumber \\
&& + 4\alpha\|\vz_{k-1} -\vw_{k-1}\|^2 - \frac{L^2 a_k^2}{4{\alpha}}\|\vw_k-\vz_k\|^2
+ a_k\langle F(\vw_k)-F(\vw_k;\xi_k), \vw_k-\vu\rangle. \nonumber
\end{eqnarray}

For both the settings  $\sigma = 0$ and $\sigma>0$, by our setting, we have $a_k\ge a_1=\frac{{\alpha\gamma}}{L}$ and $\alpha = \min\{\frac{\gamma}{32}, \frac{1}{16}\}$, so we have
\begin{eqnarray}
\frac{L^2a_k^2}{2(\alpha\gamma)^2}  \ge \frac{1}{2} \ge 8\alpha, \quad  \frac{L^2a_k^2}{4\alpha}\ge 8\alpha. 
\end{eqnarray}
Then it follows that 
\begin{eqnarray}
E_{2k}&\le&
-4\alpha
\|\vw_{k}- \vz_{k-1}\|^2 + \frac{4{\alpha}}{L^2} (\| F(\vw_k) - F(\vw_k;\xi_k)\|_*^2 + \| F(\vw_{k-1}) -  F(\vw_{k-1};\xi_{k-1})\|_*^2)\nonumber \\
&& + 4\alpha\|\vz_{k-1} -\vw_{k-1}\|^2 -  8\alpha\|\vw_k-\vz_k\|^2
+ \frac{{\alpha\gamma}}{L} \langle F(\vw_k)-F(\vw_k;\xi_k), \vw_k-\vu\rangle. \label{eq:E2k-3}
\end{eqnarray}

So summing \eqref{eq:E2k-3} from $k=1$ to $K$ and by the fact $\E[\langle F(\vw_k)-F(\vw_k;\xi_k), \vw_k-\vu\rangle] =0$, we have 
\begin{eqnarray}
\E\Big[\sum_{k=1}^K E_{2k}\Big]&\le& \E\Big[-4\alpha\sum_{k=1}^K (\|\vw_k-\vz_{k-1}\|^2 + \|\vw_{k} - \vz_{k}\|^2)\nonumber \\
&&+ \frac{4{\alpha}}{L^2}\sum_{k=1}^K (\| F(\vw_k) - F(\vw_k;\xi_k)\|_*^2 + \| F(\vw_{k-1}) -  F(\vw_{k-1};\xi_{k-1})\|_*^2)\Big]\nonumber\\
&\overset{(a)}{\le}& -\E\Big[4\alpha\sum_{k=1}^K (\|\vw_k\!-\!\vz_{k-1}\|^2 \!+\! \|\vw_{k-1} \!-\! \vz_{k-1}\|^2)\Big]\!-\!\E[4\alpha\|\vw_K \!-\!\vz_K\|^2] \!+\! \frac{8\alpha s^2 K}{L^2}\nonumber\\
&\le& -\E\Big[4\alpha\sum_{k=1}^K (\|\vw_k-\vz_{k-1}\|^2 + \|\vw_{k-1} - \vz_{k-1}\|^2)\Big]+ \frac{8\alpha s^2 K}{L^2},\nonumber\\
\label{eq:E2k-2}
\end{eqnarray}
where $(a)$ is by the condition $\vw_0 = \vz_0$ and Assumption \ref{ass:unbias-variance}. Lemma \ref{lem:no-E} is proved.

\end{proof}

\subsection{Proof of Lemma \ref{lem:no-strong-dist}}\label{sec:lem:no-strong-dist}
It follows that: $\forall \vw\in\gW$
\begin{eqnarray}
&&  \langle F(\vw_k), \vw_k - \vw\rangle \nonumber\\
&=&    \Big\langle   F(\vw_k) - \Big(  F(\vw_{k-1};\xi_{k-1}) + \frac{L^2 a_k}{{(\alpha\gamma)^2} }\nabla_{\vw_k}\frac{1}{2}\|\vw_k - \vz_{k-1}\|^2\Big), \vw_k - \vw\Big\rangle \nonumber\\
&&+  \big\langle   F(\vw_{k-1};\xi_{k-1}) + \frac{L^2 a_k}{{(\alpha\gamma)^2}} \nabla_{\vw_k}\frac{1}{2}\|\vw_k - \vz_{k-1}\|^2, \vw_k - \vw\big\rangle\nonumber\\
&\overset{(a)}{=}&    \Big\langle   F(\vw_k) - \Big(  F(\vw_{k-1};\xi_{k-1}) + \frac{L}{{\alpha\gamma} }\nabla_{\vw_k}\frac{1}{2}\|\vw_k - \vz_{k-1}\|^2\Big), \vw_k - \vw\Big\rangle \nonumber\\
&&+  \big\langle F(\vw_{k-1};\xi_{k-1}) + \frac{L}{{\alpha\gamma}} \nabla_{\vw_k}\frac{1}{2}\|\vw_k - \vz_{k-1}\|^2, \vw_k - \vw\big\rangle\nonumber\\
&\overset{(b)}{\le}&  \Big\langle   F(\vw_k) - \big(  F(\vw_{k-1};\xi_{k-1}) + \frac{L}{{\alpha\gamma} } \nabla_{\vw_k}\frac{1}{2}\|\vw_k - \vz_{k-1}\|^2\big), \vw_k-\vw\Big\rangle\nonumber\\
&\le&     \langle  F(\vw_k) - F(\vw_{k-1}), \vw_k-\vw \rangle +  \langle F(\vw_{k-1})- F(\vw_{k-1};\xi_{k-1}), \vw_k-\vw\rangle \nonumber\\
&&\quad\quad+ \Big\langle \frac{L}{{\alpha\gamma} } \nabla_{\vw_k}\frac{1}{2}\|\vw_k - \vz_{k-1}\|^2, \vw_k-\vw\Big\rangle,\nonumber
\end{eqnarray}
where $(a)$ is by the fact $a_k = \frac{\alpha\gamma}{L}$ when $\sigma = 0,$ $(b)$ is by the optimality condition of $\vw_k$.

So it follows that 
\begin{eqnarray}
&&  \langle F(\vw_k), \vw_k - \vw\rangle \nonumber\\
&\overset{(a)}{\le}&\|F(\vw_k) - F(\vw_{k-1})\|_*\|\vw_k-\vw\| \nonumber\\
&&+  \langle F(\vw_{k-1})- F(\vw_{k-1};\xi_{k-1}), \vw_k-\vw_{k-1}\rangle \nonumber\\
&&+   \langle F(\vw_{k-1})- F(\vw_{k-1};\xi_{k-1}), \vw_{k-1}-\vw\rangle\nonumber\\
&&+ \frac{L}{{\alpha\gamma} } \big\|\nabla_{\vw_k} \frac{1}{2}\|\vw_k - \vz_{k-1}\|^2\big\|_* \|\vw_k-\vw\|\nonumber\\
&\overset{(b)}{\le}&  L\|\vw_k - \vw_{k-1}\|\|\vw_k-\vw\|+ \|F(\vw_{k-1})- F(\vw_{k-1};\xi_{k-1})\|_*\|\vw_k-\vw_{k-1}\| \nonumber\\
&&+   \langle F(\vw_{k-1})- F(\vw_{k-1};\xi_{k-1}), \vw_{k-1}-\vw\rangle+\frac{L\delta}{{\alpha\gamma}} \|\vw_k-\vz_{k-1}\| \|\vw_k-\vw\|\nonumber\\
&{\le}&
\big(1 +\frac{\delta}{{\alpha\gamma}}\big)L(\|\vw_k-\vz_{k-1}\| + \|\vw_{k-1}-\vz_{k-1}\|)\|\vw_k-\vw\|\nonumber\\
&&+ \|F(\vw_{k-1})- F(\vw_{k-1};\xi_{k-1})\|_*\|\vw_k-\vw_{k-1}\|\nonumber\\
&&+   \langle F(\vw_{k-1})- F(\vw_{k-1};\xi_{k-1}), \vw_{k-1}-\vw\rangle,\label{eq:ddl4}
\end{eqnarray}
$(a)$ is by the Cauchy Schwarz inequality and simple arrangement, and
$(b)$ is by  Assumption \ref{ass:lip}.

Then 
\begin{eqnarray}
&&  \langle F(\vw_k), \vw_k - \vw\rangle \nonumber\\
&\overset{(a)}{\le}&\big(1 +\frac{\delta}{{\alpha\gamma}}\big)L(\|\vw_k-\vz_{k-1}\| + \|\vw_{k-1}-\vz_{k-1}\|)\|\vw_k-\vw\| \nonumber\\
&&+ \frac{1}{2L^2} \|F(\vw_{k-1})- F(\vw_{k-1};\xi_{k-1})\|_*^2\nonumber\\
&&+ \frac{L^2}{2}\|\vw_k-\vw_{k-1}\|^2+   \langle F(\vw_{k-1})- F(\vw_{k-1};\xi_{k-1}), \vw_{k-1}-\vw\rangle, \label{eq:F-str}
\end{eqnarray}
where  $(a)$ is by the fact $ab\le \frac{a^2}{2} + \frac{b^2}{2}$. 

So taking expectation on $\xi_{k-1}$,  by Assumption \ref{ass:unbias-variance}, we have: $\forall \vw\in\gW$ 
\begin{eqnarray}
&&\E_{\xi_{k-1}}[\langle F(\vw_{k-1})- F(\vw_{k-1};\xi_{k-1}), \vw_{k-1}-\vw\rangle]\nonumber\\
&=& \langle\E_{\xi_{k-1}}[F(\vw_{k-1})- F(\vw_{k-1};\xi_{k-1})], \vw_{k-1}-\vw\rangle \nonumber\\
&=& \langle F(\vw_{k-1}) - F(\vw_{k-1}), \vw_{k-1}-\vw\rangle  \nonumber\\
&=&0.\label{eq:concen0}
\end{eqnarray}

By Assumption \ref{ass:unbias-variance}, we have 
\begin{align}
&\E_{\xi_{k-1}}\Big[\sup_{\vw\in\gW, \|\vw_k-\vw\|\le D}  \langle F(\vw_k), \vw_k - \vw\rangle\Big]\nonumber\\
\le&\big(1 +\frac{\delta}{{\alpha\gamma}}\big)L D\E_{\xi_{k-1}}[(\|\vw_k-\vz_{k-1}\| + \|\vw_{k-1}-\vz_{k-1}\|)]   + \frac{L^2}{2}\E_{\xi_{k-1}}[\|\vw_k-\vw_{k-1}\|^2] +  \frac{s^2}{2L^2}.\nonumber
\end{align}

Lemma \ref{lem:no-strong-dist} is proved.

\subsection{Proof of Theorem \ref{thm:sode}}\label{sec:thm:sode}
\begin{proof}
Firstly, by the setting $a_k = \frac{{\alpha\gamma}\sqrt{1+\sigma A_{k-1}}}{L}$ and $A_0=0, A_k = A_{k-1} + a_k,$ we have
\begin{itemize}
\item If $\sigma = 0,$ then $A_k = \frac{{\alpha\gamma}k}{L}.$
\item If $\sigma >0$, then $A_k =  \Big(\frac{\alpha\gamma}{4L}\Big)^2\sigma (k+1)^2$.
\end{itemize} 

Then for both the setting $\sigma=0$ ($i.e., $ Assumption \ref{ass:weak} holds) and $\sigma>0$ ($i.e., $ Assumption \ref{ass:strong-weak} holds),  we have 
\begin{eqnarray}
\langle F(\vw_k), \vw_k-\vw^*\rangle \ge  \frac{\sigma}{\gamma} (V_{\vw_k-\vw_0}(\vw^*-\vw_0) + V_{\vw^*-\vw_0}(\vw_k-\vw_0)). 
\end{eqnarray}
So in Lemma \ref{lem:sto:es}, let $\vu=\vw^*$,  we have 
\begin{eqnarray}
 0&\le&\E\Big[\sum_{k=1}^{K} \frac{\sigma a_k }{2}\|\vw_k - \vw^*\|^2\Big]\nonumber\\
 &\overset{(a)}{\le}&  
\E\Big[\sum_{k=1}^{K} \frac{\sigma a_k}{\gamma} V_{\vw^*-\vw_0}(\vw_k-\vw_0)\Big]\nonumber\\
&\overset{(b)}{\le}&  \E\Big[\sum_{k=1}^{K}a_k\left(\langle F(\vw_k), \vw_k-\vw^*\rangle -  \frac{\sigma}{\gamma} V_{\vw_k-\vw_0}(\vw^*-\vw_0)\right)\Big] \nonumber\\
&\overset{(c)}{\le}&\E\Big[\sum_{k=1}^{K}E_{2k} + \frac{1}{2\gamma}\|\vw^*-\vw_0\|^2  \Big]\nonumber\\
&\overset{(d)}{\le}&    -\E\Big[4\alpha\sum_{k=1}^K (\|\vw_k\!-\!\vz_{k-1}\|^2+ \|\vw_{k-1} \!-\! \vz_{k-1}\|^2)\Big]+ \frac{8\alpha s^2 K}{L^2} + \frac{1}{2\gamma}\|\vw^*\!-\!\vw_0\|^2,\label{eq:sto:es1-v2}
\end{eqnarray}
where $(a)$ is by the $\gamma$-strong convexity Bregman divergence of $V_{\vw^*-\vw_0}(\vw_k-\vw_0)$, $(b)$ is by the Assumption \ref{ass:weak} ($\sigma = 0$) or the Assumption \ref{ass:strong-weak} ($\sigma > 0$), $(c)$ is by Lemma \ref{lem:sto:es}, and $(d)$ is by Lemma \ref{lem:no-E}.

After a simple arrangement, we have
\begin{eqnarray}
\E\Big[\sum_{k=1}^{K}\frac{1}{K}(\|\vw_{k}- \vz_{k-1}\|^2 +\|\vw_{k-1}-\vz_{k-1}\|^2)\Big] 
\le  \frac{\|\vw^*-\vw_0\|^2}{8\alpha K} + \frac{2s^2}{L^2}.\label{eq:sto:es11}
\end{eqnarray}

Then by randomly picking a $\tilde{k}\in [K]$ with probability distribution $\big\{\frac{a_1}{A_K}, \frac{a_2}{A_K},\ldots, \frac{a_K}{A_K}\big\}$ and let the output $\tilde{\vw}_K := \vw_{\tilde{k}}$, then taking expectation on $\tilde{\vw}_K$
\begin{eqnarray}
\E_{\tilde{k}}[\|{\vw}_{\tilde{k}} \!-\! \vz_{\tilde{k}-1}\|^2 + \|{\vw}_{\tilde{k}-1} \!-\! \vz_{\tilde{k}-1}\|^2 ] 
&=& \sum_{k=1}^{K}\frac{a_k}{A_K}(\|\vw_{k}\!-\! \vz_{k-1}\|^2 +\|\vw_{k-1}\!-\!\vz_{k-1}\|^2).
\end{eqnarray}
So taking expectation on all the history, we have
\begin{eqnarray}
\E[\|{\vw}_{\tilde{k}} - \vz_{\tilde{k}-1}\|^2 + \|{\vw}_{\tilde{k}-1} - \vz_{\tilde{k}-1}\|^2 ] \le  \frac{\|\vw^*-\vw_0\|^2}{8\alpha K} + \frac{2s^2}{L^2}. \label{eq:sto:es12}
\end{eqnarray}

Then taking expectation on all the history, we have
\begin{eqnarray}
&&\E[\|{\vw}_{\tilde{k}} - \vz_{\tilde{k}-1}\| + \|{\vw}_{\tilde{k}-1} - \vz_{\tilde{k}-1}\|]\nonumber\\
&\overset{(a)}{\le}&(\E[(\|{\vw}_{\tilde{k}} - \vz_{\tilde{k}-1}\| + \|{\vw}_{\tilde{k}-1} - \vz_{\tilde{k}-1}\|)^2])^{1/2}\nonumber\\
&\overset{(b)}{\le}& (\E[2(\|{\vw}_{\tilde{k}} - \vz_{\tilde{k}-1}\|^2 + \|{\vw}_{\tilde{k}-1} - \vz_{\tilde{k}-1}\|^2)])^{1/2}\nonumber\\
&\overset{(c)}{\le}&\sqrt{2}\sqrt{\frac{\|\vw^*-\vw_0\|^2}{8\alpha K} + \frac{2s^2}{L^2}}, \label{eq:sto:es13}
\end{eqnarray}
where $(a)$ is by the Jensen inequality, $(b)$ is by the fact that $(a+b)^2\le 2(a^2+b^2)$ and $(c)$ is by \eqref{eq:sto:es12}. 

\vspace{-2mm}

\begin{eqnarray}
&&\E\Big[\sup_{\vw\in\gW, \|\vw_{\tilde{k}}-\vw\|\le D}  \langle F(\vw_{\tilde{k}}), \vw_{\tilde{k}} - \vw\rangle\Big]\nonumber\\
&\overset{(a)}{\le}&\E\Big[\big(1 +\frac{\delta}{{\alpha\gamma}}\big)LD (\|\vw_{\tilde{k}}-\vz_{{\tilde{k}}-1}\| + \|\vw_{{\tilde{k}}-1}-\vz_{{\tilde{k}}-1}\|)\nonumber\\
&&+ \frac{L^2}{2}\|\vw_{\tilde{k}}-\vw_{{\tilde{k}}-1}\|^2 
+ \frac{1}{2L^2} \|F(\vw_{{\tilde{k}}-1})- F(\vw_{{\tilde{k}}-1};\xi_{{\tilde{k}}-1})\|_*^2\Big]
\nonumber\\
&\overset{(b)}{\le}& \E\Big[\big(1 \!+\!\frac{\delta}{{\alpha\gamma}}\big)LD (\|\vw_{\tilde{k}}\!-\!\vz_{{\tilde{k}}-1}\| \!+\! \|\vw_{{\tilde{k}}-1}\!-\!\vz_{{\tilde{k}}-1}\|) \!+\! \frac{L^2}{2}(\|\vw_{\tilde{k}}\!-\!\vz_{{\tilde{k}}-1}\| \!+\! \|\vw_{{\tilde{k}}-1} \!-\! \vz_{{\tilde{k}}-1}\|)^2\Big]\nonumber\\
&&+\frac{s^2}{2L^2}\nonumber\\
&\overset{(c)}{\le}& \E\Big[\big(1\! +\!\frac{\delta}{{\alpha\gamma}}\big)LD (\|\vw_{\tilde{k}}\!-\!\vz_{{\tilde{k}}-1}\| \!+\! \|\vw_{{\tilde{k}}-1}\!-\!\vz_{{\tilde{k}}-1}\|) \!+\! L^2(\|\vw_{\tilde{k}}\!-\!\vz_{{\tilde{k}}-1}\| \!+\! \|\vw_{{\tilde{k}}-1} \!-\! \vz_{{\tilde{k}}-1}\|)^2\Big]\nonumber\\
&&+\frac{s^2}{2L^2}\nonumber\\
&\overset{(d)}{\le}& \sqrt{2}\big(1 \!+\!\frac{\delta}{{\alpha\gamma}}\big)L D \sqrt{ \frac{\|\vw^*-\vw_0\|^2}{8\alpha K}+ \frac{2s^2}{L^2}}+L^2\Big(\frac{\|\vw^*-\vw_0\|^2}{8\alpha K} + \frac{2s^2}{L^2}\Big) + \frac{s^2}{2L^2},
\end{eqnarray}
where $(a)$ is by Lemma \ref{lem:no-strong-dist}, $(b)$ is by the triangle inequality of $\|\cdot\|$ and Assumption \ref{ass:unbias-variance}, $(c)$ is by  the triangle inequality of $\|\cdot\|$, $(d)$ is by \eqref{eq:sto:es12} and \eqref{eq:sto:es13}.

For $\sigma >0,$ by \eqref{eq:sto:es1-v2} and \eqref{eq:sto:es11}, we have 
\begin{eqnarray}
 &&\E\Big[\sum_{k=1}^{K} \frac{ a_k \sigma}{2}\|\vw_k - \vw^*\|^2\Big]\nonumber\\
&{\le}& -\E\Big[4\alpha\sum_{k=1}^K (\|\vw_k-\vz_{k-1}\|^2+ \|\vw_{k-1} - \vz_{k-1}\|^2)\Big]+ \frac{8\alpha s^2 K}{L^2} + \frac{1}{2\gamma}\|\vw^*-\vw_0\|^2\nonumber \\
&\le&\frac{8\alpha s^2 K}{L^2} + \frac{1}{2\gamma}\|\vw^*-\vw_0\|^2. \nonumber
\end{eqnarray}

So by the definition of $\tilde{\vw}_K$, taking expectation on the randomness of all the history, we have
\begin{eqnarray}
&&\E[\|\vw_{\tilde{k}}-\vw^*\|^2] \nonumber\\
&\le& \E\Big[\sum_{k=1}^{K} \frac{ a_k}{A_K}\|\vw_k - \vw^*\|^2\Big]\nonumber\\
&\le&  \frac{2}{\sigma A_K}\Big(\frac{8\alpha s^2 K}{L^2} + \frac{1}{2\gamma}\|\vw^*-\vw_0\|^2\Big)\nonumber\\
&\le& \frac{32 L^2}{\sigma^2 (\alpha\gamma)^2 (K+1)^2}\Big(\frac{8\alpha s^2 K}{L^2} + \frac{1}{2\gamma}\|\vw^*-\vw_0\|^2\Big). \nonumber
\end{eqnarray}
Theorem \ref{thm:sode} is proved. 
\end{proof}
\end{document}